\documentclass[reqno, 10pt]{article}
\usepackage{amssymb, amsmath, amsthm, amsfonts, amscd, epsfig}
\usepackage{color}

\usepackage[english]{babel}
\usepackage{bbm}
\usepackage{graphicx, epsfig}
\usepackage{geometry,graphicx,pgfplots,caption}
\usepackage{verbatim}
\usepackage{dsfont}
\usepackage{bm}

\usepackage[titletoc,toc,title]{appendix}
\usepackage[normalem]{ulem}
\newtheorem{theorem}{Theorem}[section]

\newtheorem{lemma}{Lemma}[section]

\newtheorem{definition}{Definition}[section]

\newtheorem{ass}{Assumption}[section]

\newtheorem{remark}{Remark}[section]
{\bf}{\it}

\newcommand{\Om}{\Omega}

\newcommand{\p}{\partial}

\newcommand{\beq}{\begin{equation}}
\newcommand{\eeq}{\end{equation}}
\newcommand{\RN}[1]{%
\textup{\uppercase\expandafter{\romannumeral#1}}%
}

\usepackage{lineno}

\renewcommand{\d}{\,\mathrm{d}}

\numberwithin{equation}{section}
\numberwithin{figure}{section}

\begin{document}

\title{Unique Determination of Variable Order in Subdiffusion from a Single Measurement\thanks{B. Jin is partly supported by Hong Kong RGC General Research Fund (14306824) and ANR / Hong Kong RGC Joint Research Scheme (A-CUHK402/24) and a start-up fund from The Chinese University of Hong Kong. The work of Y. Kian is supported by the French National Research Agency ANR and Hong Kong RGC Joint
Research Scheme for the project IdiAnoDiff (grant ANR-24-CE40-7039).}}

\author{Jiho Hong\thanks{Department of Mathematics, The Chinese University of Hong Kong, Shatin, N. T., Hong Kong SAR, P.R. China (\texttt{jihohong@cuhk.edu.hk, b.jin@cuhk.edu.hk})}\and Bangti Jin\footnotemark[2]\and Yavar Kian\thanks{Univ Rouen Normandie, CNRS, Normandie Univ, LMRS UMR 6085, F-76000 Rouen, France (\texttt{yavar.kian@univ-rouen.fr})}}

\maketitle

\begin{abstract}
We study the inverse problem of recovering a spatially dependent variable order in a time-fractional diffusion model from the boundary flux measurement generated by a single boundary excitation. It arises in the identification of heterogeneous media in anomalous diffusion processes. In this work,
we establish several new uniqueness results for the inverse problem in the case of piecewise constant variable orders, without any monotonicity condition. The analysis follows a new approach that combines properties of harmonic functions, a linearization technique in the Laplace domain, and tools from complex, asymptotic, and geometrical analysis. In addition, we weaken the regularity assumptions on the problem data and extend the analysis of previous contributions to higher-dimensional settings. \\
{\textbf{Key words}: subdiffusion, variable order, uniqueness, linearization, asymptotic analysis,  spherical inclusion, polygonal inclusion. }\\
{\textbf{Mathematics subject classification 2020}: 35R30, 35R11.}
\end{abstract}
\section{Introduction}
In this work, we investigate an inverse problem arising in spatially-variable order subdiffusion.
Let $\Om\subset \mathbb{R}^d$ ($d\ge2$) be an open bounded and connected domain with a Lipschitz  boundary $\partial\Omega$. Let
$\alpha:\Omega \to(0,1)$ be a spatially variable order function. Consider the following variable-order subdiffusion model for the function $U(t,x)$:
\begin{equation}\label{eq:2D3D:ibvp}
\left\{
\begin{aligned}
\partial_t^{\alpha(x)}U - \Delta_x U &=0,\quad \mbox{in }(0,\infty)\times \Omega,\\
U&=g,\quad \mbox{on }(0,\infty)\times\partial\Omega,\\
U(0,\cdot)&=0,\quad \mbox{in }\Omega,
\end{aligned}\right.
\end{equation}
where $g$ is the boundary excitation. In the model \eqref{eq:2D3D:ibvp}, the spatially variable order Caputo fractional derivative $\partial_t^{\alpha(x)} U(t,x)$ in time $t$ is defined by (see, e.g., \cite[p. 92]{KilbasSrivastavaTrujillo:2006} or \cite[p. 41]{Jin:2021})
\begin{equation}
    \partial_t^{\alpha(x)}U(t,x) = \frac{1}{\Gamma(1-\alpha(x))}\int_0^t(t-s)^{-\alpha(x)}\partial_sU(s,x)\,{\rm d}s,
\end{equation}
where $\Gamma(z)=\int_0^\infty e^{-t} t^{z-1}\,{\rm d}t$, for $\Re(z)>0$, denotes Euler's Gamma function.

The model \eqref{eq:2D3D:ibvp} describes space-dependent anomalous diffusion processes in complex media in which  heterogeneous regions exhibit spatially inhomogeneous variations.
It can be derived in the framework of continuous time random walk, with a space-dependent waiting-time distribution
\cite{Orsingher:2018,ZhangLiLuo:2013}. The model has been employed for the modeling of the evolution of a composite system with two separate regions with different subdiffusion exponents \cite{ChechkinGorenflo:2005}, subdiffusion infiltration in disordered systems \cite{KorabelBarkai:2010} and structural instability of fractional diffusion in inhomogeneous media \cite{Fedotov:2012}. In these applications, the variable order
$\alpha$  provides a fundamental characterization of the class of anomalous diffusion processes, and its identification is of paramount importance for an accurate description of medium properties.

In this work, we investigate the  inverse problem of recovering the variable order $\alpha$ from over-posed boundary data generated by a single Dirichlet boundary condition $g$, and establish several results on unique  determination.  More precisely, for all $d\in\mathbb{N}\backslash\{1\}$, we investigate the following inverse problem:
\begin{itemize}
	\item[{\bfseries (IP):}] Determine the variable order $\alpha$ from the flux data $\{\p_\nu U(t,x)\}_{t\in I,\,x\in\p\Om}$,
where $I$ is an arbitrary subset of $(0,\infty)$ with at least one positive accumulation point and $U$ is the solution of problem \eqref{eq:2D3D:ibvp} for a suitably chosen Dirichlet boundary condition $g$.
\end{itemize}

The recovery of the fractional order $\alpha$ is a central issue in inverse problems for subdiffusion \cite{JinRundell:2015,KaltenbacherRundell:2023,LiLiuYamamoto:2019}. This topic has attracted substantial interest within the mathematical community. The majority of the existing literature is devoted to the identification of one or multiple constant orders  \cite{JiKi2,JK,LY,LZ} or distributed-order  \cite{JiKi,LILuY,RuZ}. In stark contrast, the analysis of \textbf{(IP)} in the case of a variable-order $\alpha$ remains scarce. This lack of results is primarily attributed to the inherent complexity of \textbf{(IP)} when the order $\alpha:\Omega\to (0,1)$ is a space-dependent function, rather than a constant parameter that may be determined from the  asymptotic behavior in time of the solution $U$ of \eqref{eq:2D3D:ibvp} as $t\to\infty$ or $t\to0$.

Several works have investigated  \textbf{(IP)} with a spatially dependent variable order $\alpha$. One of the earliest contributions in this direction is \cite{Kian:2018:TFD}, which employs infinitely many measurements, namely measurements associated with each Dirichlet excitation $g$ belonging to an infinite-dimensional space. One of the first results addressing \textbf{(IP)} can be found in \cite{IkehataKian:2023}, which gives qualitative properties of the variable order $\alpha$ by means of the enclosure method (see e.g. \cite{Ikk1}), for a restricted and non-explicit class of Dirichlet excitations $g$. More recently, the works \cite{HJK:2025:ISDVO,HJK:2026:2D3D} established several uniqueness results for \textbf{(IP)} in spatial dimensions $d\in\{1,2,3\}$ using the flux measurement at one boundary point, under a strong monotonicity assumption on admissible candidates (that is, the possible realizations of $\alpha$ belong to a totally ordered set). In this work, we substantially strengthen the contributions of \cite{Kian:2018:TFD,HJK:2025:ISDVO,HJK:2026:2D3D} by solving \textbf{(IP)}, completely removing the monotonicity condition for the admissible piecewise constant orders $\alpha$; see Section \ref{sec:main} for detailed statements and further discussions. This is achieved through the use of several novel analytical tools, including holomorphic extensions of carefully constructed auxiliary functions and refined directional asymptotic analyses, which differ markedly from those employed in the existing literature \cite{HJK:2025:ISDVO,HJK:2026:2D3D}.

The analysis relies on a novel methodology that combines several mathematical arguments. We first transform the problem into the Laplace domain by exploiting analytic properties of problem \eqref{eq:2D3D:ibvp}. Then, by employing a Dirichlet boundary excitation $g$ satisfying Assumption~\ref{ass:g:exptype} and using a linearization of the Laplace transform in time
$\widehat{U}(p,\cdot):=\int_0^\infty U(t,\cdot) e^{-pt}\,{\rm d}t$, of the solution $U$ of \eqref{eq:2D3D:ibvp}  at frequency $p=1$, we derive a key orthogonal identity in Lemma \ref{lemma:uniqueness:main:identity}:
\begin{equation}\label{Int}
    \int_\Om (\alpha^1(x)-\alpha^2(x)) e^{x\cdot(\omega+\omega_0)}\,{\rm d} x = 0,\quad\forall \omega\in\mathbb{S}^{d-1},
\end{equation}
where $\omega_0\in\mathbb{S}^{d-1}$, and $\alpha^1$ and $\alpha^2$ are the candidates for the target $\alpha$. The proof of uniqueness is completed by proving $\alpha^1=\alpha^2$ almost everywhere in the domain $\Om$. To this end, we rewrite equation \eqref{Int} using the hyperspherical coordinates to express the vector $\omega\in \mathbb{S}^{d-1}$, and define the complex variable extension of the integral, 
which is holomorphic in each variable in $\mathbb{C}$ (cf. Lemma \ref{lemma:exponential:in:integralkernel}). By the unique continuation property of holomorphic functions, the identity \eqref{Int} still holds for the complex variable extension (cf. Remark \ref{remark:extended:identity}).
Moreover, under suitable assumptions, the integrals have explicit expressions (cf. Lemmas \ref{lemma:Bessel} and \ref{lemma:simplex:integral} for real variables and Lemmas \ref{lemma:Bessel:complexversion} and \ref{lemma:simplex:integral:complex} for complex variables).
By a delicate asymptotic analysis along an appropriate half-line in the complex plane, we derive $\alpha^1=\alpha^2$ in various settings for the unique determination of $\alpha$.

The core of our analysis concerns an inverse problem for elliptic equations in the Laplace domain associated with \eqref{eq:2D3D:ibvp} (see \eqref{eq:afterLaptrans} in Section 3), whose objective is the identification of a piecewise constant variable order $\alpha$ from a single boundary measurement. We rely on various properties of elliptic equations, representation formulas as well as the study of  the key orthogonality identity \eqref{Int}, which constitute the main ingredients in the proof of our principal results. This analysis, as well as the formulation of the problem in the Laplace domain, are also connected with the classical inverse problem of identifying inhomogeneities or inclusions, a topic that has been extensively investigated in the  literature \cite{AIP,ADKL,DLLi,DLLi2,FrIs,FV,Ik98,KaSe,LCY,TrTs}; see \cite{AK} for a comprehensive review.

The rest of the paper is organized as follows. In Section \ref{sec:main}, we describe the main results and provide relevant further discussions. Then in Section \ref{sec:direct}, we collect preliminaries about the direct problem, including well-posedness, linearization of the Laplace transform of the solution, and a crucial integral identity. In Sections \ref{sec:inverse} and \ref{section:polygons}, we analyze the cases of spherical inclusions and polygonal inclusions, respectively. For any integer $n$, the notation $[n]$ denotes the set $\{1,\ldots,n\}$, and $\mathbb{R}_+=(0,\infty)$.

\section{Main results and discussions}\label{sec:main}

In this section, we state the main theoretical findings, i.e., the uniqueness of the inverse problem of determining the spatially varying order $\alpha$ in problem \eqref{eq:2D3D:ibvp} from the boundary flux data $\p_\nu U|_{I\times\p\Om}$. This requires suitable conditions on the variable order $\alpha$, the Dirichlet boundary excitation $g$ and the set $I$.

\begin{ass}
\label{ass:alpha:general}
    $\alpha:\Om\to(0,1)$ is measurable and satisfies $\mathrm{esssup}_{x\in \Omega}\alpha(x) <2\mathrm{essinf}_{x\in\Omega}\alpha(x)$.
\end{ass}
\begin{ass}
\label{ass:g:exptype}
    There exist some $k\in\mathbb{N}\backslash\{1\}$ and $\omega_0\in\mathbb{S}^{d-1}$ such that
    $$g(t,x)=t^k e^{x\cdot\omega_0},\quad\forall (t,x)\in(0,\infty)\times\p\Om.$$
$I$ is a subset of $(0,\infty)$ with at least one positive accumulation point.
\end{ass}
We prove in Section \ref{sec:direct} that, under Assumptions \ref{ass:alpha:general} and \ref{ass:g:exptype} on $\alpha$ and $g$, problem  \eqref{eq:2D3D:ibvp} admits a unique solution $U\in C^1([0,\infty);L^2(\Omega))\cap C([0,\infty);H^1(\Omega))$ with $\p_\nu U\in C([0,\infty);H^{-\frac 1 2}(\p\Omega))$. We will study \textbf{(IP)} for a piecewise constant variable order $\alpha$.
\subsection{Spherical inclusions}
For any $r>0$ and $x\in\mathbb{R}^d$, let $B_r(x)$ be a ball of radius $r$ and center $x$ in $\mathbb{R}^d$.
Also, fix $\alpha_{\rm in}$ to be any function satisfying Assumption \ref{ass:alpha:general}. For a set $A\subset \mathbb R^d$, $\mathds{1}_A$ denotes the characteristic function of $A$.
\begin{ass}\label{ass:alpha}
Let $\alpha$ satisfy Assumption \ref{ass:alpha:general}. Also, suppose that there exist some $N\in\mathbb{N}$, $\{r_j\}_{j=1}^N\subset(0,\infty)$, $\{x_j\}_{j=1}^N\subset\Om$ and $\{\alpha_j\}_{j=1}^N\subset(-1,1)$ such that $\{\overline{B_{r_j}(x_j)}\}_{j=1}^N$ is a family of $($possibly intersecting$)$ balls satisfying $\overline{B_{r_j}(x_j)}\subset\Om$ for all $1\le j\le N$ and
\begin{equation}\label{al}
    \alpha(x)=\alpha_{\rm in}(x)+\sum_{j=1}^N \alpha_j \mathds{1}_{B_{r_j}(x_j)}(x),\quad\forall x\in\Om.
\end{equation}

\end{ass}

	\begin{figure}[h]
		\centering
		\begin{tabular}{ccc}
        \includegraphics[width=0.23\linewidth]{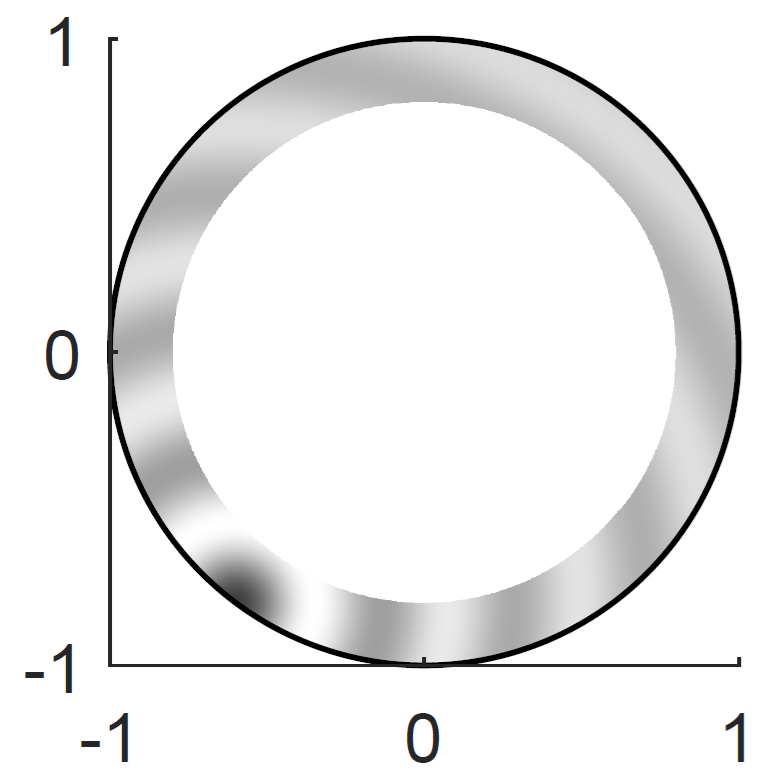}
		&\includegraphics[width=0.23\linewidth]{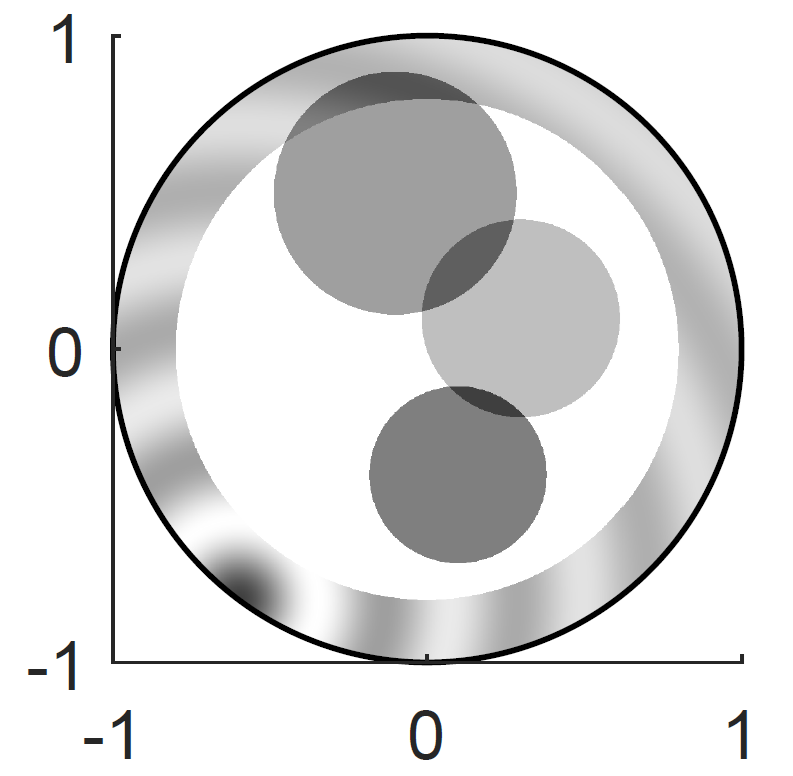}
		&\includegraphics[width=0.23\linewidth]{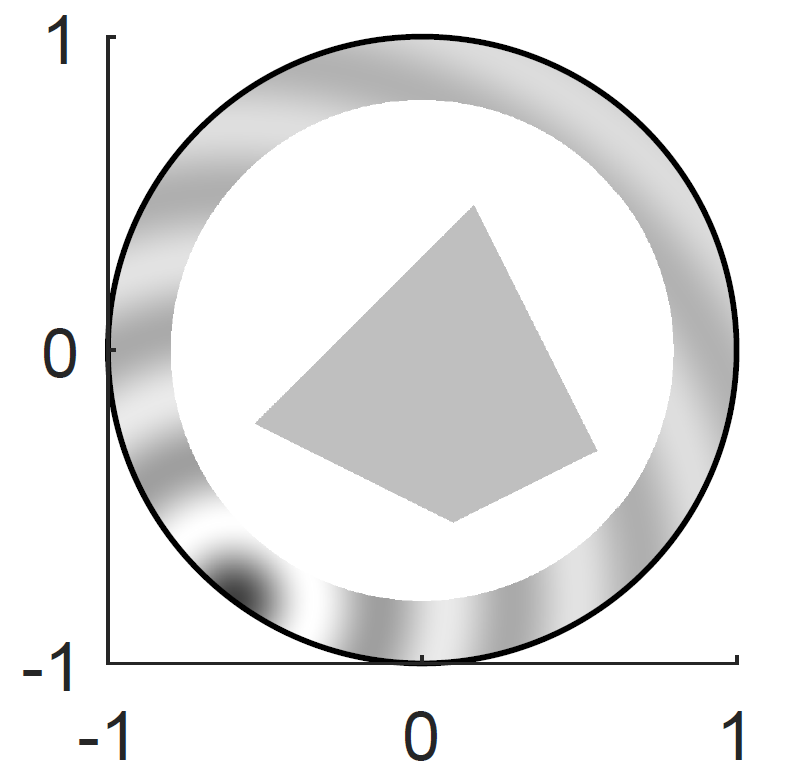}
		\includegraphics[width=0.065\linewidth]{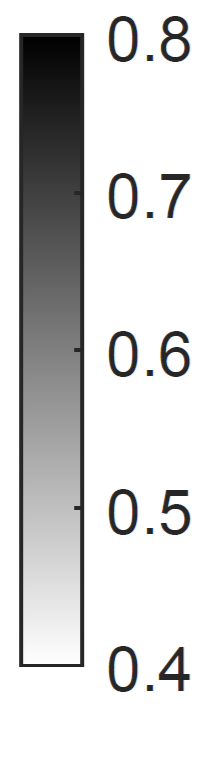}\\
        (a) & (b) & (c)
    \end{tabular}
		\caption{Examples of $\alpha$ defined on $\Om=B_1(0)$ with (a) $\alpha_{\rm in}$, (b) and (c)  candidate $\alpha$ profiles satisfying the requirements of Theorems \ref{theorem:balls} and \ref{theorem:2D:convex}, respectively. }\label{fig:Thms:21:23}
	\end{figure}	

The next result gives the unique identifiability of spherical inclusions and their amplitudes. Fig. \ref{fig:Thms:21:23}(b) shows one $\alpha$ (with $\alpha_{\rm in}$ in (a)) satisfying the requirement of  Theorem \ref{theorem:balls} (i.e., Assumption \ref{ass:alpha}).
\begin{theorem}
\label{theorem:balls}
Let Assumption \ref{ass:g:exptype} hold. Let  $U=U^i$ be the solution to  \eqref{eq:2D3D:ibvp} with $\alpha=\alpha^i$ for $i=1,2$ satisfying Assumption \ref{ass:alpha}.
If $\p_\nu U^1(t,x)=\p_\nu U^2(t,x)$ for all $(t,x)\in I\times\p\Om$, then we have $\alpha^1=\alpha^2$.
\end{theorem}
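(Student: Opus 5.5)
We will start from the orthogonality identity of Lemma~\ref{lemma:uniqueness:main:identity}, which, under Assumption~\ref{ass:g:exptype} and equality of the flux data on $I\times\p\Om$, yields
\begin{equation*}
\int_\Om (\alpha^1(x)-\alpha^2(x))\, e^{x\cdot(\omega+\omega_0)}\d x=0,\qquad \forall\,\omega\in\mathbb{S}^{d-1}.
\end{equation*}
Under Assumption~\ref{ass:alpha}, the common background $\alpha_{\rm in}$ cancels. Merging coincident balls and discarding zero amplitudes, $\alpha^1-\alpha^2=\sum_{j=1}^M c_j\mathds{1}_{B_{\rho_j}(y_j)}$ with distinct pairs $(y_j,\rho_j)$ and $c_j\neq0$. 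The goal is to show $M=0$. For a single ball, the integral is explicit through Lemma~\ref{lemma:Bessel}: with $\xi=\omega+\omega_0$,
\begin{equation*}
\int_{B_\rho(y)}e^{x\cdot\xi}\d x=e^{y\cdot\xi}\,\rho^{d}\,\Phi_d(\rho\sqrt{\xi\cdot\xi}),
\end{equation*}
where $\Phi_d(s)=(2\pi)^{d/2}s^{-d/2}I_{d/2}(s)$ is an even entire function of $s$, so it is entire in $\xi\cdot\xi$. Thus the identity becomes $\sum_j c_j\rho_j^d e^{y_j\cdot\xi}\Phi_d(\rho_j\sqrt{\xi\cdot\xi})=0$ for $\xi\in\omega_0+\mathbb{S}^{d-1}$.

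Next, we will pass to complex variables. Parametrize $\omega$ by hyperspherical angles; the left side is entire in each angle (Lemma~\ref{lemma:exponential:in:integralkernel}), so by unique continuation (Remark~\ref{remark:extended:identity} and Lemma~\ref{lemma:Bessel:complexversion}) the identity holds for complexified $\omega$ with $\omega\cdot\omega=1$. Then $\xi\cdot\xi=2+2\omega\cdot\omega_0$. We choose a complex direction such as $\omega=\cos\theta\,\omega_0+\sin\theta\, e$ with $e\perp\omega_0$ and $\theta=\theta_1+i\tau$, $\tau\to\pm\infty$, or more flexibly $\omega=a\,\omega_0+b\,e+i\,c\,f$ with real parameters satisfying $a^2+b^2-c^2=1$. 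Along such half-lines, $e^{y_j\cdot\xi}$ grows like $e^{(y_j\cdot \Re\xi)}$ times an oscillation $e^{i y_j\cdot\Im\xi}$, and $\Phi_d(\rho_j s)$ behaves like $C(\rho_j s)^{-(d+1)/2}e^{\rho_j s}$ for $\Re s\to+\infty$.

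The key step is to pick a direction in which the term with an extremal combined exponent $y_j\cdot\Re\xi+\rho_j\Re\sqrt{\xi\cdot\xi}$ strictly dominates all the others. Then dividing by it and letting the parameter go to infinity forces $c_j=0$, and we conclude by induction on $M$. Concretely, we expect to take the real part of $\xi$ to be $t\eta$ for a unit vector $\eta$ and large $t$, with $\sqrt{\xi\cdot\xi}\approx t$. The imaginary part will be used to keep $\omega\cdot\omega=1$ (e.g.\ $\omega\approx t\eta+i t\zeta$, $\zeta\perp\eta$, with $|\zeta|\approx1$). The exponent is then $t(y_j\cdot\eta+\rho_j)$, the support function of the ball $B_{\rho_j}(y_j)$ in direction $\eta$. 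Distinct balls have distinct support functions, so some $\eta$ gives a unique maximizer; a generic choice works, and ties between finitely many balls occur only on a small set of directions. The remaining oscillatory factors $e^{it y_j\cdot\zeta}$ have modulus one and do not affect the dominance.

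The main obstacle is making this rigorous. First, we must verify that the complexified parametrization reaches these directions $\xi$ with $\xi\cdot\xi$ of the needed size, taking care with the branch of the square root and with $\omega_0$ being fixed. Second, we need Bessel asymptotics that are uniform enough that the lower-order corrections and possibly subdominant ties do not spoil the conclusion. If exact ties in the exponent persist, we will fall back on comparing the polynomial prefactors $\rho_j^{(d-1)/2}$ and the distinct oscillation frequencies $y_j\cdot\zeta$. For the oscillations we will use an almost-periodic/Dirichlet-series independence argument.
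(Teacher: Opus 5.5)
\textbf{There is a genuine gap.} Your overall architecture matches the paper's: the orthogonality identity, complexifying one hyperspherical angle, Bessel asymptotics, and peeling off a dominant term. But the asymptotic claim behind your dominance criterion is false.

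You note yourself that $\xi\cdot\xi=2+2\,\omega\cdot\omega_0$ on the complex sphere $\omega\cdot\omega=1$, and this is \emph{linear} in $\omega$. The constraint forces $|\Re\omega|^2-|\Im\omega|^2=1$. So if $\Re\omega\approx t\eta$ and $\Im\omega\approx t\zeta$, the $t^2$ terms in $\xi\cdot\xi$ cancel. Hence $|\xi\cdot\xi|=O(t)$ and $\sqrt{\xi\cdot\xi}=O(\sqrt t)$, not $\approx t$.

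Concretely, take $\omega=\cos\theta\,\omega_0+\sin\theta\,e$ with $\theta=\theta_1+{\rm i}\tau$. Then $\Re\omega=\cosh\tau\,\eta$ and $\xi\cdot\xi=2+2\cos\theta\approx e^{\tau}e^{-{\rm i}\theta_1}$. With $t=\cosh\tau$, the $j$-th term therefore has modulus
\[
\exp\Bigl(t\,(y_j\cdot\eta)+\rho_j\sqrt{2t}\,\cos(\theta_1/2)\Bigr)\times(\mbox{algebraic factors}).
\]
This is what the paper obtains with $t=\tfrac12 e^R$: centres enter at scale $e^R$, radii only at scale $e^{R/2}$. (The paper's displayed Bessel asymptotic drops the harmless factor $\cos(\widetilde\theta/2)>0$.)

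So the support function $y_j\cdot\eta+\rho_j$ never appears, and maximizing it can select the wrong term. For example, take $y_1\cdot\eta=0,\ \rho_1=5$ and $y_2\cdot\eta=1,\ \rho_2=1$. Your criterion picks ball $1$, but ball $2$ dominates for large $t$. Dividing by the ball-$1$ term and letting $t\to\infty$ then does not give $c_1=0$.

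The repair is the paper's lexicographic ordering. Choose $\eta$ in the open dense set where distinct centres have distinct projections $y_j\cdot\eta$ (Lemma \ref{lemma:sphere:and:hyperplane}), and with $\eta\ne-\omega_0$. The paper takes $\widetilde\theta\in[0,\tfrac{\pi}{2})$, which gives $\cos(\widetilde\theta/2)>0$ and keeps $\widetilde\theta-{\rm i}R$ in the strip $\widetilde K$ where Lemma \ref{lemma:Bessel:complexversion} applies. Then the largest projection dominates at order $t$. Among balls sharing that centre, the largest radius dominates at order $\sqrt t$, because $\Re\sqrt{\xi\cdot\xi}\to+\infty$.

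With this ordering there are no ties at all. Your fallback to polynomial prefactors and almost-periodic independence of the oscillations $e^{{\rm i}t\,y_j\cdot\zeta}$ is therefore unnecessary.

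Your use of the even entire function $\Phi_d$, viewed as a function of $\xi\cdot\xi$, is a genuine simplification. It makes the explicit ball formula valid for all complex $\xi$ and avoids the branch bookkeeping of Lemma \ref{lemma:Bessel:complexversion}. You still need two things for $\Phi_d(\rho s)\sim C(\rho s)^{-(d+1)/2}e^{\rho s}$ to hold and for the radii to separate: take the root $s$ of $s^2=\xi\cdot\xi$ with $\Re s>0$, and keep $\xi\cdot\xi$ away from the negative real axis. This is exactly what excluding $\eta=-\omega_0$ (i.e. $\theta_1=\pm\pi$) guarantees.
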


To the best of our knowledge, Theorem~\ref{theorem:balls} represents the first resolution to \textbf{(IP)} that does not rely on any monotonicity assumption. Specifically, Theorem~\ref{theorem:balls} applies to any variable order $\alpha$ of the form~\eqref{al} and, in contrast to  existing studies~\cite{HJK:2025:ISDVO,HJK:2026:2D3D}, does not require any monotonicity condition. This relaxation is of great importance, both in view of potential practical applications and from a purely mathematical standpoint, as the recovery of a piecewise constant variable order $\alpha$ is a highly nonlinear and intrinsically challenging inverse problem. Moreover, the unique Dirichlet boundary excitation $g$ is prescribed in an explicit form in Assumption~\ref{ass:g:exptype}, which can potentially be useful for numerical reconstruction.

The proof of Theorem~\ref{theorem:balls} is based on a novel methodology that combines two fundamental ingredients, which appear to be new in the analysis of \textbf{(IP)}. The first ingredient consists in the linearization of the data in the Laplace domain, which leads to the key orthogonality identity~\eqref{Int}. The second ingredient relies on the derivation of the uniqueness result through analytic continuation of the identity~\eqref{Int} into the complex domain, together with the use of delicate directional asymptotic analyses. The most delicate part of the proof of Theorem~\ref{theorem:balls} lies in the simultaneous identification of the number $N$ of discontinuity interfaces, and the centers $\{x_j\}_{j=1}^N$ and radii $\{r_j\}_{j=1}^N$ of the discontinuity balls $\{B_{r_j}(x_j)\}_{j=1}^N$. Importantly, all these parameters are determined without imposing any additional assumptions.

One of the central steps in the proof of Theorem~\ref{theorem:balls} consists in reformulating \textbf{(IP)} as a family of elliptic boundary value problems in the Laplace domain. Hence \textbf{(IP)} is closely connected to the inverse problem of recovering a piecewise constant potential in an elliptic equation from a single boundary measurement. Note that, in the absence of any monotonicity type of assumption, the resolution of the latter problem remains open when $d\geq3$ \cite{Su1,Su2}.

Besides relaxing the monotonicity assumption imposed in the works \cite{HJK:2025:ISDVO,HJK:2026:2D3D} for \textbf{(IP)}, the present work also significantly weakens the regularity requirement on the underlying domain $\Omega$: it may possess merely Lipschitz regularity, whereas previous works \cite{Kian:2018:TFD,HJK:2025:ISDVO,HJK:2026:2D3D} typically assume $C^{1,1}$ regularity. This relaxation is made possible by a careful adaptation of the linearization step in the analysis to the more singular setting, in which the $H^2(\Omega)$ regularity of solutions to the associated elliptic boundary value problems is no longer guaranteed.

\subsection{Polygonal inclusions}
Let $\{\mathrm{e}_i\}_{i=1}^d$ be an orthonormal basis of $\mathbb{R}^d$.
Let $T\subset\mathbb{R}^d$ be the $d$-simplex with the set of vertices being $\{\mathbf{0}\}\cup\{\mathrm{e}_i\}_{i=1}^d$:
\begin{equation}\label{eq:T:definition}
    T=\left\{\sum_{i=1}^d x_i\mathrm{e}_i\,:\,\sum_{i=1}^d x_i<1\mbox{ and }x_j>0\mbox{ for } j=1,\dots,d\right\}.
\end{equation}
For any square matrix $V\in\mathbb{R}^{d\times d}$  with $\det V>0$ and $x\in\mathbb{R}^d$, we denote by $T_{x}(V):=x+V T$ the $d$-simplex with the set of vertices being $\{x\}\cup\{x+V\mathrm{e}_i\}_{i=1}^d$. Note that any nondegenerate simplex can be expressed as $T_{x}(V)$ for some square matrix $V\in\mathbb{R}^{d\times d}$ with $\det V>0$ and  $x\in\mathbb{R}^d$.

The following assumption facilitates the proof of uniqueness. The term ``irreducible" in the statement ``an irreducible set of simplices $\{T_{x_i}(V_i)\}_{i=1}^N$" means that the statement is no longer true if one excludes any simplex in the set $\{T_{x_i}(V_i)\}_{i=1}^N$.
\begin{ass}\label{ass:alpha:simplices}
Let Assumption \ref{ass:alpha:general} be satisfied for both $\alpha=\alpha^1$ and $\alpha=\alpha^2$. Also, suppose that there exists a finite set of disjoint simplices $\{T_{x_i}(V_i)\}_{i=1}^N$ such that $\alpha^1-\alpha^2$ is constant in $T_{x_i}(V_i)$ for each $i\in [N]$ and is zero in $\Om\backslash\bigcup_{i=1}^N \overline{T_{x_i}(V_i)}$. Moreover, if $\alpha^1\not \equiv\alpha^2$, for the convex hull $P$ of the union of an irreducible set of simplices $\{T_{x_i}(V_i)\}_{i=1}^N$, there exist some $i\in[N]$ and a vertex of $T_{x_i}(V_i)$ that is also a vertex of $P$ but is not a vertex of any simplex in $\{T_{x_k}(V_k)\}_{k\ne i}$.
\end{ass}

\begin{figure}[hbt!]
\centering\setlength{\tabcolsep}{0pt}
\begin{tabular}{cccc}
\includegraphics[width=0.24\linewidth]{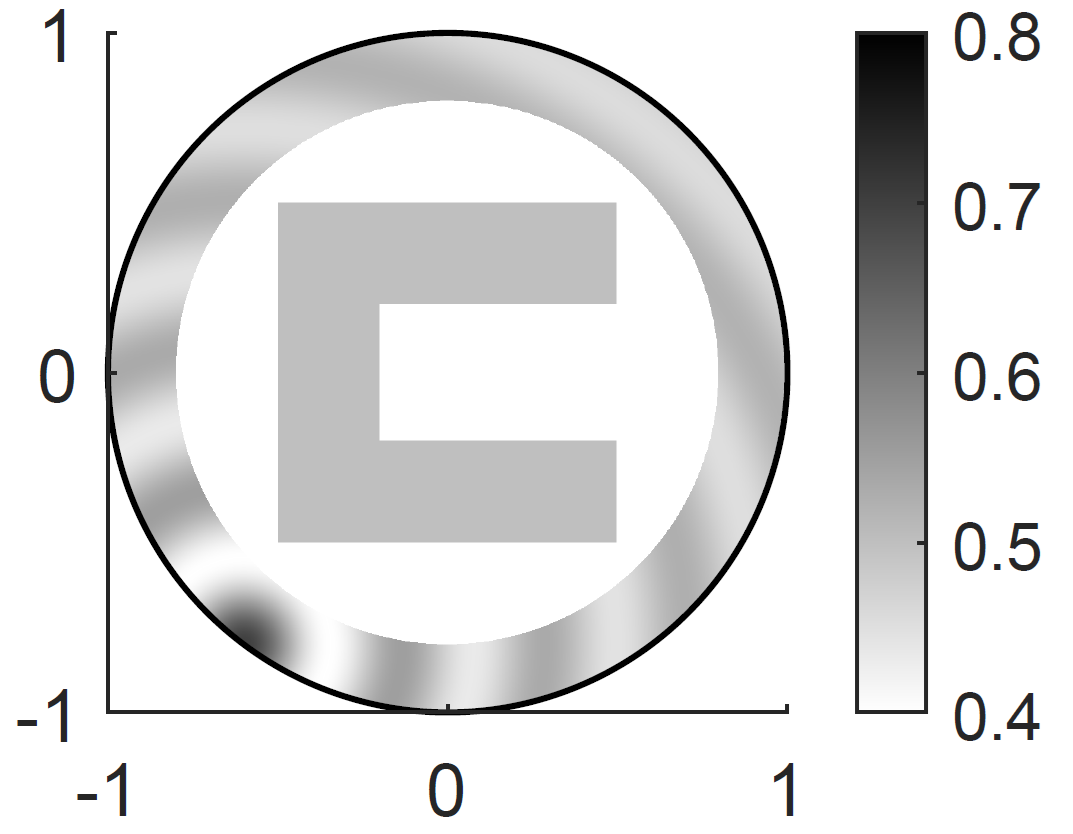}&\includegraphics[width=0.24\linewidth]{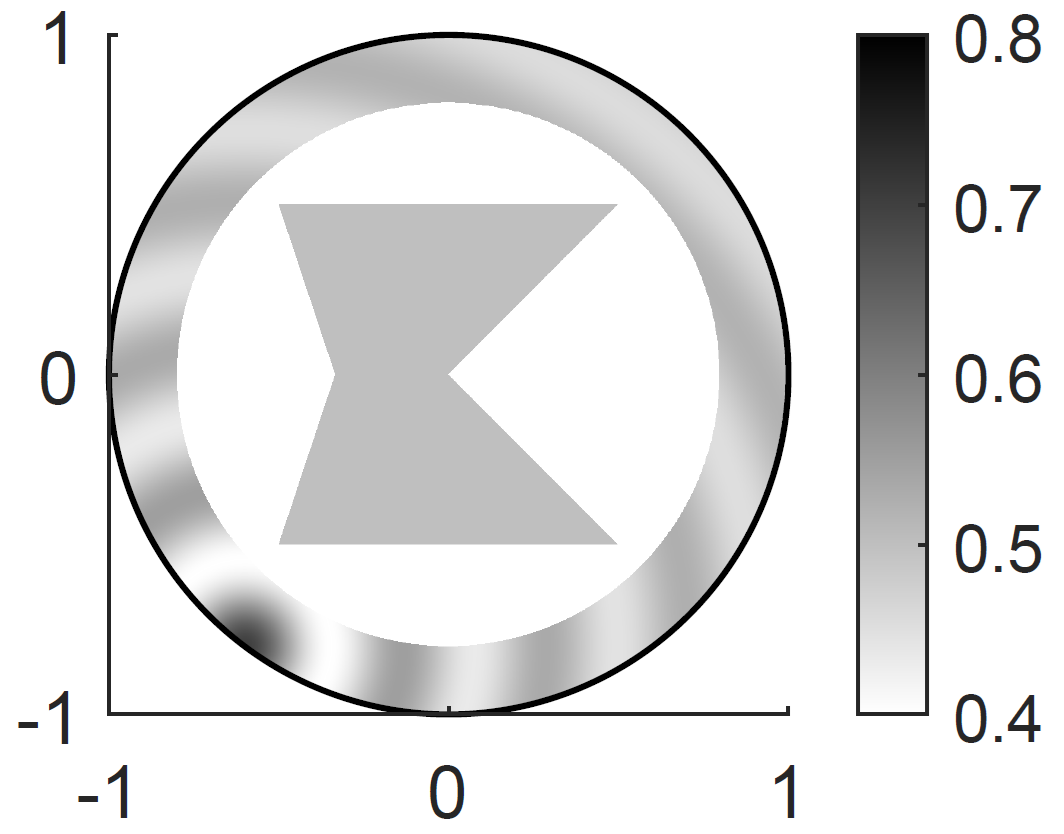}&\includegraphics[width=0.24\linewidth]{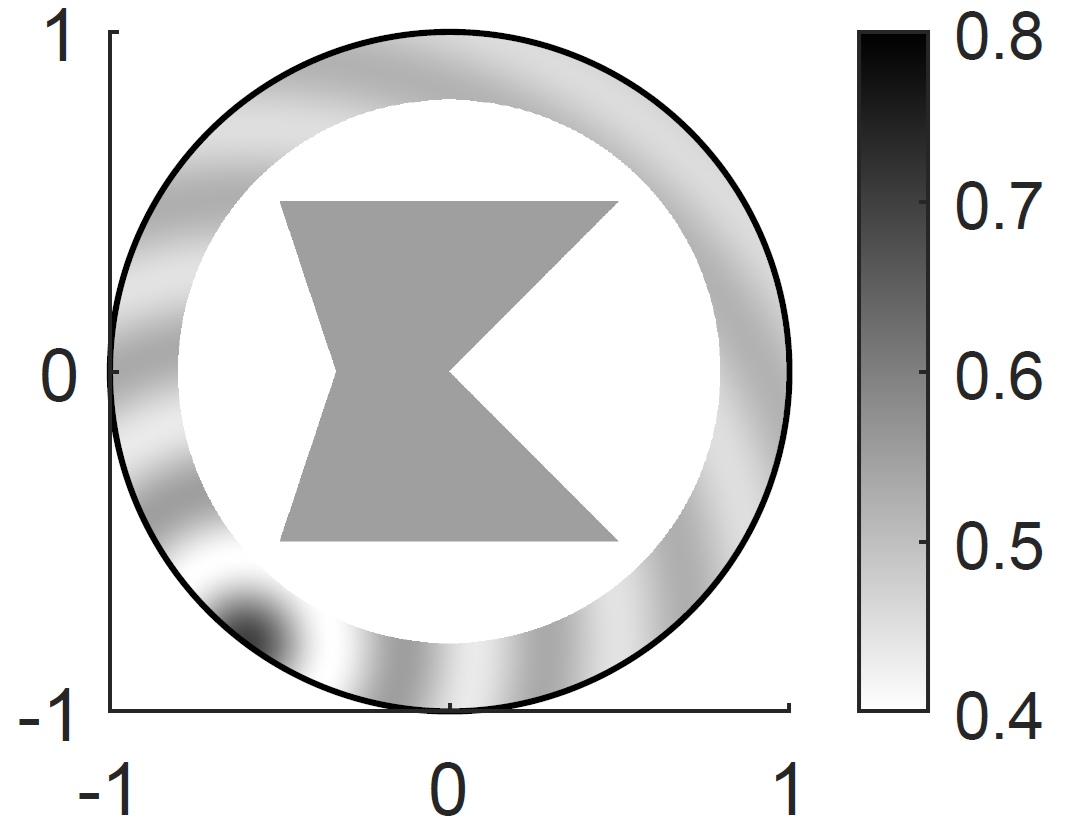}\\
$\alpha^1$ & $\alpha^2$ & $\alpha^3$
\end{tabular}
\caption{\label{fig:examples}
A schematic illustration of Assumption \ref{ass:alpha:simplices} on the variable order $\alpha$.
}
\end{figure}

\begin{figure}[hbt!]
\centering\setlength{\tabcolsep}{0pt}
\begin{tabular}{cccc}
\includegraphics[width=0.24\linewidth]{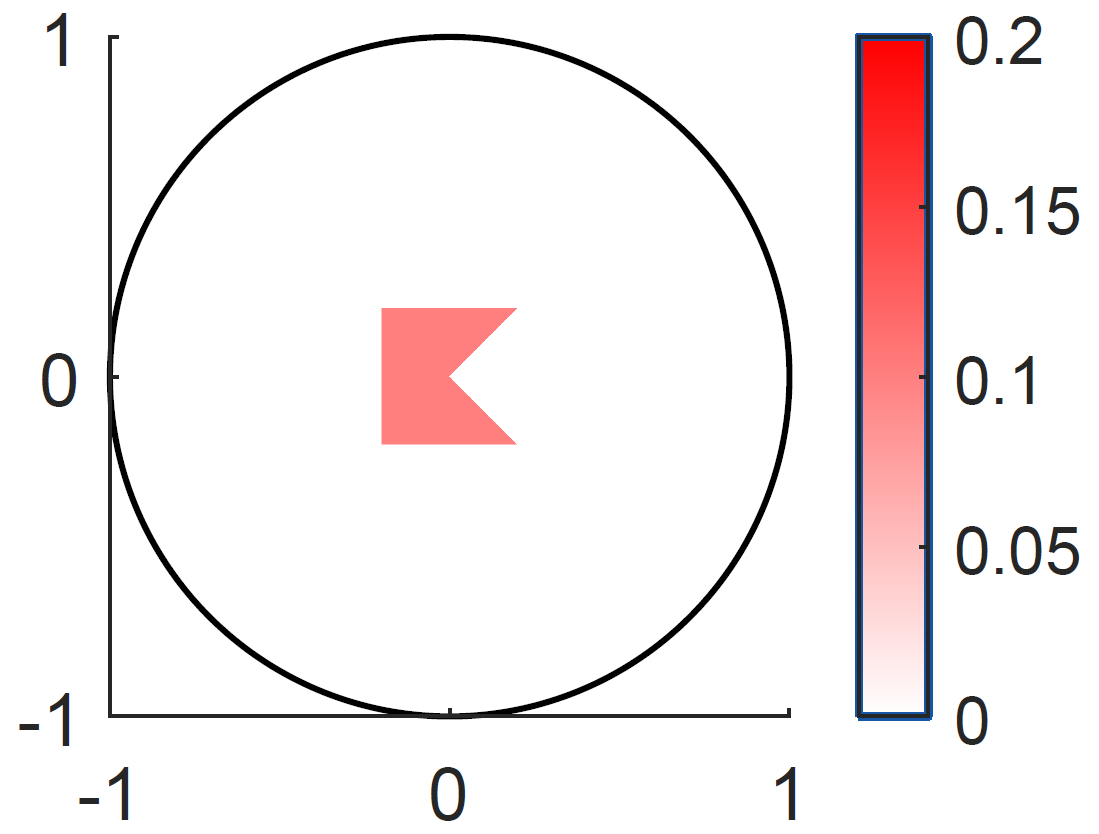}&
\includegraphics[width=0.24\linewidth]{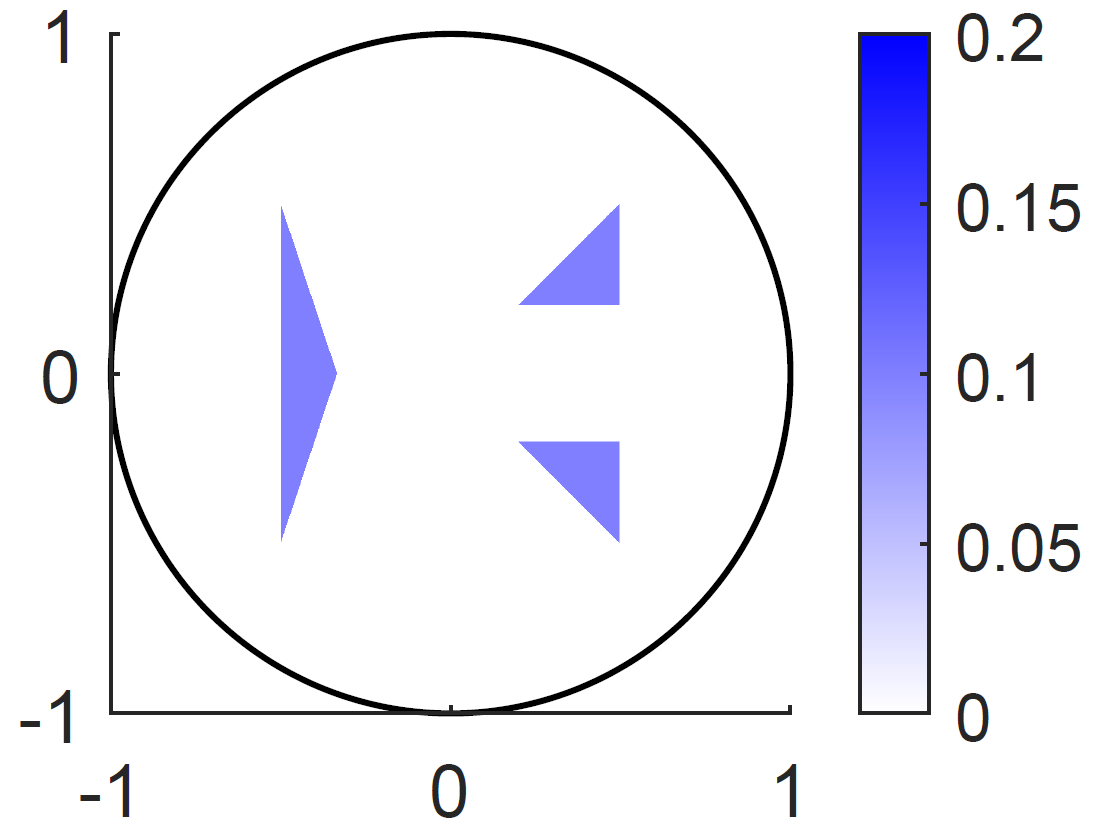}&
\includegraphics[width=0.18\linewidth]{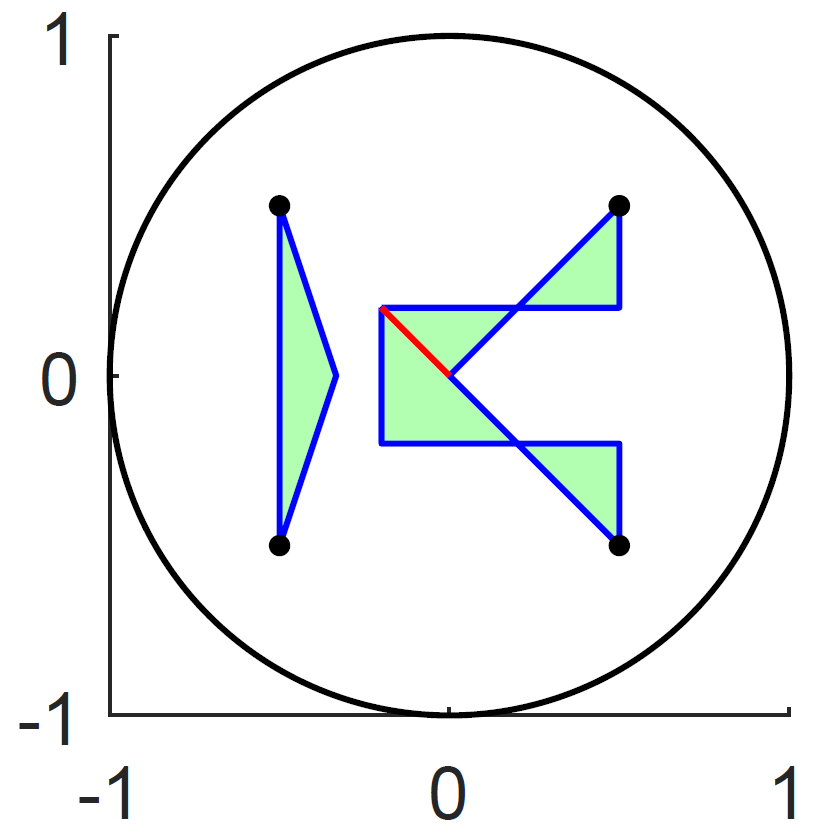}\\
$\max\left(0,\alpha^2-\alpha^1\right)$ & $\max\left(0,\alpha^1-\alpha^2\right)$ & 
$\operatorname{supp}(\alpha^1-\alpha^2)$
\\[5mm]
\includegraphics[width=0.24\linewidth]{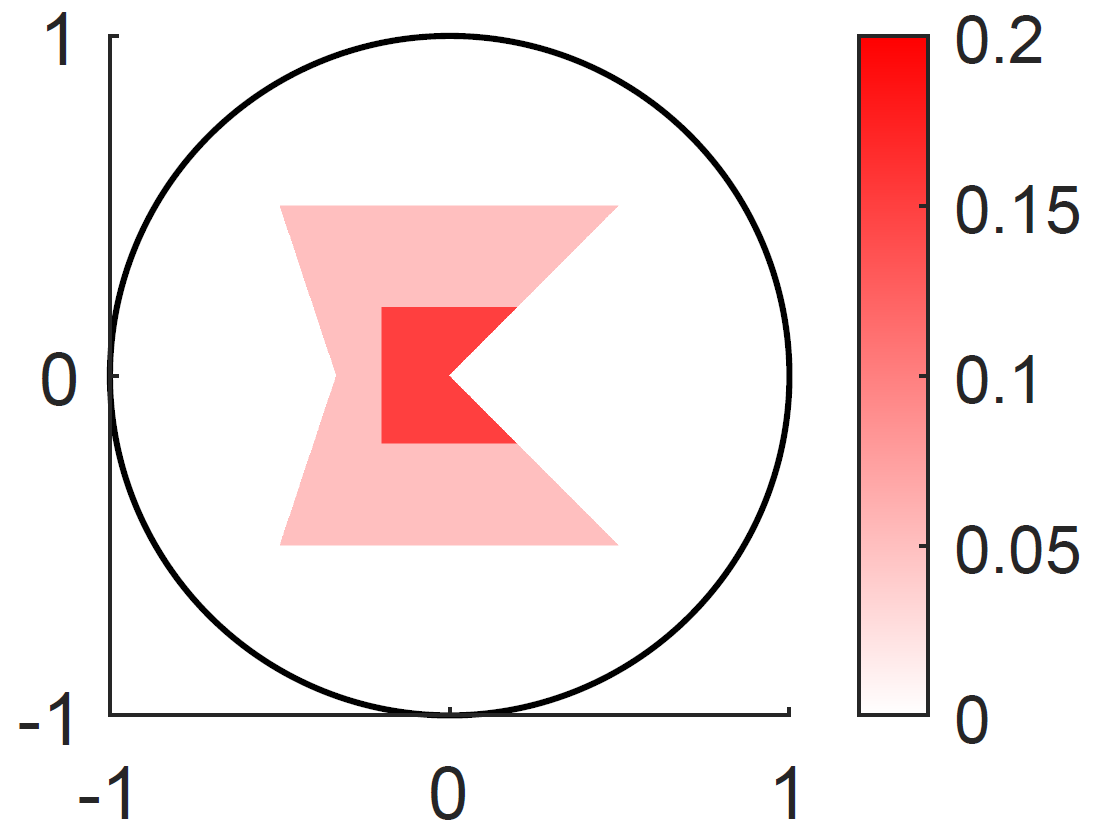}&
\includegraphics[width=0.24\linewidth]{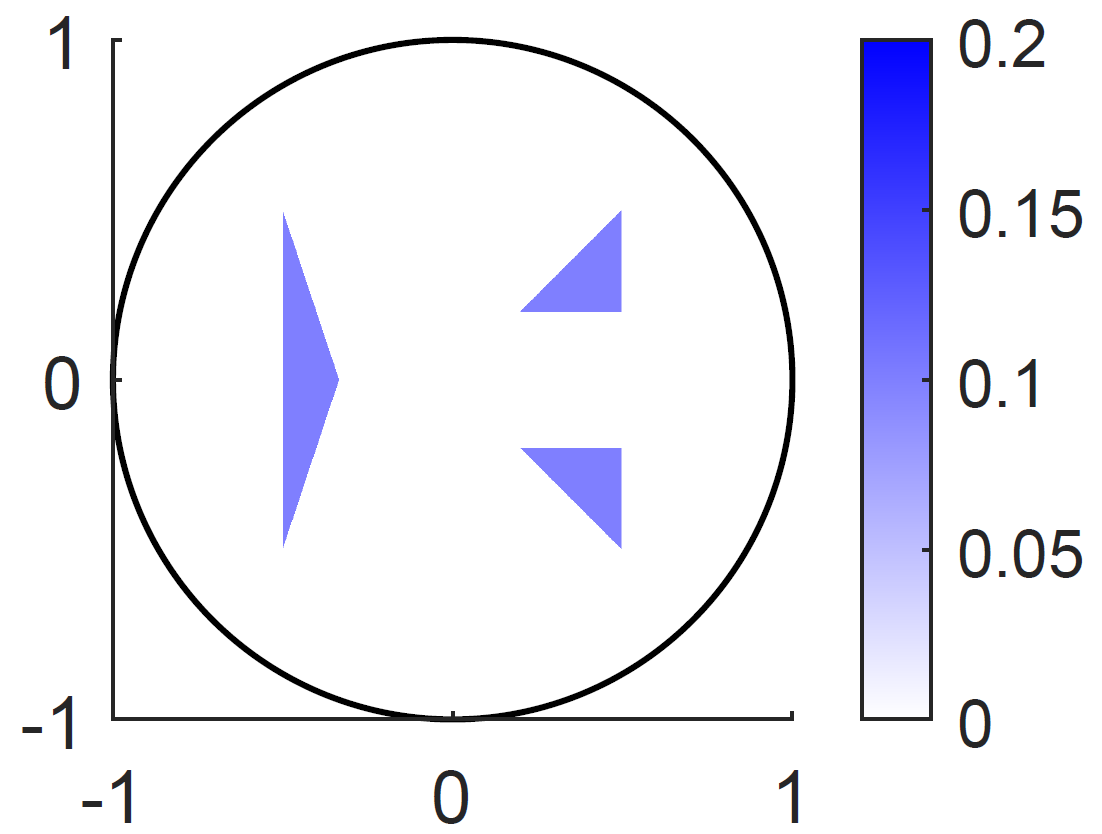}&
\includegraphics[width=0.18\linewidth]{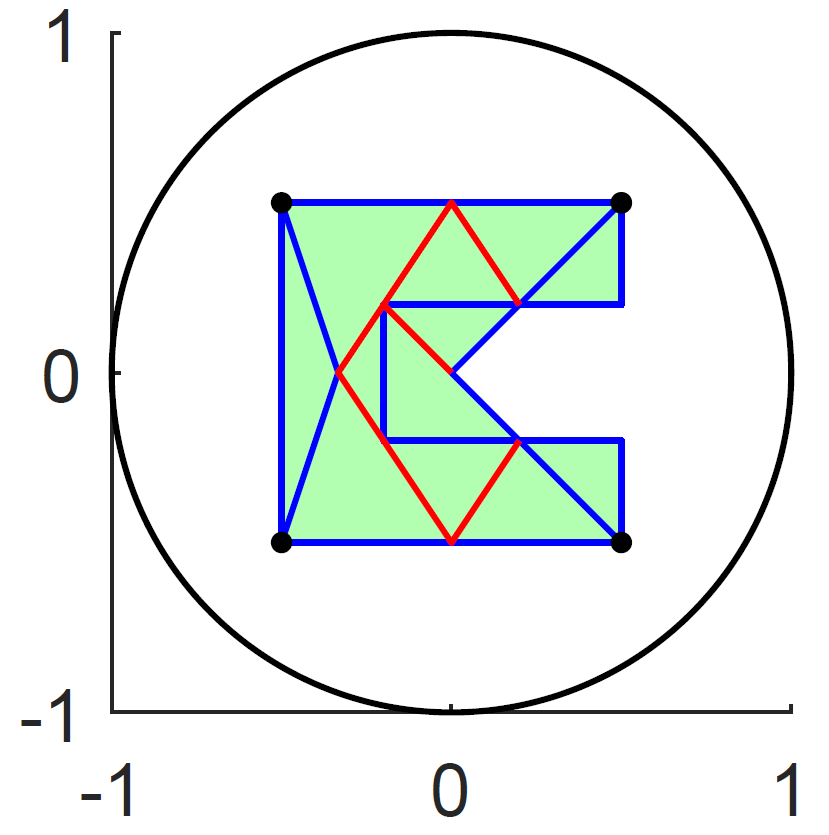}\\
$\max\left(0,\alpha^3-\alpha^1\right)$ & $\max\left(0,\alpha^1-\alpha^3\right)$ & 
$\operatorname{supp}(\alpha^1-\alpha^3)$\\
(a) & (b) & (c)
\end{tabular}
\caption{\label{fig:Assumption:violated}
The differences $\alpha^1-\alpha^i$  ($i=2,3$) in Fig. \ref{fig:examples}.
The pair $(\alpha^1,\alpha^2)$ satisfies Assumption \ref{ass:alpha:simplices}, whereas the pair $(\alpha^1,\alpha^3)$ violates Assumption \ref{ass:alpha:simplices}.
In (c), the regions $\operatorname{supp}(\alpha^1-\alpha^2)$ and $\operatorname{supp}(\alpha^1-\alpha^3)$ are shaded in green, and the blue and red line segments indicate the necessary and optional choices for the decomposition by nondegenerate simplices, respectively.
}
\end{figure}

In Figs. \ref{fig:examples} and \ref{fig:Assumption:violated}, we present two pairs of $\alpha$ satisfying Assumption \ref{ass:alpha:general}, one satisfying Assumption \ref{ass:alpha:simplices} and the other violating Assumption \ref{ass:alpha:simplices}.
Fig. \ref{fig:examples} shows three cases $\alpha^i$ ($i=1,2,3$) of $\alpha$ that satisfies Assumption \ref{ass:alpha:general}, and Fig. \ref{fig:Assumption:violated} illustrates their differences from several perspectives.
In Fig. \ref{fig:Assumption:violated}(c), the blue line segments are the discontinuity interfaces of $\alpha^1-\alpha^2$ (top) and $\alpha^1-\alpha^3$ (bottom), whereas the red line segments are auxiliary for constructing the simplicial decomposition.
Note that the convex hull $P$ of $\operatorname{supp}(\alpha^1-\alpha^2)$ and that of $\operatorname{supp}(\alpha^1-\alpha^2)$ are both a square.
In Fig. \ref{fig:Assumption:violated} (c) (top), the simplices shaded in green with boundaries being blue and red line segments satisfy Assumption \ref{ass:alpha:simplices} for the pair $(\alpha^1,\alpha^2)$.
However, Fig. \ref{fig:Assumption:violated}(c) (bottom) shows that Assumption \ref{ass:alpha:simplices} does not hold for the pair $(\alpha^1,\alpha^3)$, since every vertex of the convex hull $P$ of $\operatorname{supp}(\alpha^1-\alpha^3)$ is an end point of three blue line segments that are the interfaces of discontinuities of $\alpha^1-\alpha^3$.

The next result gives the unique determination of polygonal inclusions and their amplitudes.
\begin{theorem}
\label{theorem:simplex}
Let Assumption \ref{ass:g:exptype} hold.
Let $U=U^i$ be the solution to \eqref{eq:2D3D:ibvp} with $\alpha=\alpha^i$ for $i=1,2$ satisfying Assumption \ref{ass:alpha:simplices}.
If $\p_\nu U^1(t,x)=\p_\nu U^2(t,x)$ for all $(t,x)\in I\times\p\Om$,
then we have $\alpha^1=\alpha^2$.
\end{theorem}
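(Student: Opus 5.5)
The plan is to reduce the statement to the orthogonality identity \eqref{Int} and then exploit an isolated convex-hull vertex via complex directional asymptotics. Once the direct-problem analysis and Lemma \ref{lemma:uniqueness:main:identity} are established, the equality of fluxes on $I\times\p\Om$ yields, for $\beta:=\alpha^1-\alpha^2$,
\begin{equation*}
\int_\Om \beta(x)e^{x\cdot(\omega+\omega_0)}\d x=0,\quad\forall\omega\in\mathbb{S}^{d-1}.
\end{equation*}
This uses analyticity in $t$ to pass from $I$ to $(0,\infty)$, the Laplace transform, and linearization at $p=1$. By Assumption \ref{ass:alpha:simplices}, $\beta=\sum_{i=1}^N c_i\mathds{1}_{T_{x_i}(V_i)}$ with constants $c_i$. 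Arguing by contradiction, suppose $\beta\not\equiv0$ and discard simplices with $c_i=0$, so that the family is irreducible and every $c_i\neq0$.

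The next step is to extend the identity to complex directions. Writing $\omega$ in hyperspherical coordinates and using Lemma \ref{lemma:exponential:in:integralkernel} with Remark \ref{remark:extended:identity}, I obtain
\begin{equation*}
F(\xi):=\sum_i c_i\int_{T_{x_i}(V_i)}e^{x\cdot\xi}\d x=0
\end{equation*}
for all $\xi=\omega+\omega_0$ with $\omega\in\mathbb{C}^d$, $\omega\cdot\omega=1$. Lemma \ref{lemma:simplex:integral:complex} gives each integral explicitly. After the change of variables $x=x_i+V_iy$, it equals
\begin{equation*}
\det V_i\,e^{x_i\cdot\xi}\sum_{v}\frac{e^{\ell_v}}{\prod_{w\neq v}(\ell_v-\ell_w)},
\end{equation*}
where $v$ ranges over the vertices of the reference simplex and $\ell_v=(V_i v)\cdot\xi$, valid when the $\ell_v$ are distinct. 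Equivalently, it is a sum over the vertices $z$ of $T_{x_i}(V_i)$ of $e^{z\cdot\xi}$ times a rational function of $\xi$ of degree $-d$.

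Let $z_0$ be the vertex of $P$ provided by Assumption \ref{ass:alpha:simplices}, belonging only to $T_{x_{i_0}}(V_{i_0})$. I will choose a real direction $\eta$ that strictly exposes $z_0$ in $P$, so that $z_0\cdot\eta>z\cdot\eta$ for every other vertex $z$ of every simplex. I then pick complex $\omega(s)$ with $\omega\cdot\omega=1$ and $\Re\,\omega(s)\approx s\eta$ as $s\to\infty$ along a half-line. For instance, take $\omega=s\eta+i\sqrt{s^2-1}\,\zeta$ with $\zeta\perp\eta$ a unit vector, perturbed slightly so that all $(z-z')\cdot\omega\neq0$. The imaginary part gives only oscillating phases, whose modulus does not affect the dominance.

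Multiplying $F$ by $e^{-z_0\cdot\xi}$ and letting $s\to\infty$, every term from vertices $z\neq z_0$ decays like $e^{-cs}$, since $(z_0-z)\cdot\eta>0$ and the rational factors grow at most polynomially. The only surviving term is
\begin{equation*}
c_{i_0}\det V_{i_0}\prod_{w\ne z_0}\big((z_0-w)\cdot\xi\big)^{-1},
\end{equation*}
where $w$ runs over the other vertices of $T_{x_{i_0}}(V_{i_0})$. This term is of exact size $s^{-d}$ with a nonzero limit coefficient, provided $(z_0-w)\cdot(\eta+i\zeta)\neq0$ for each of the $d$ edges at $z_0$, which a generic choice of $\zeta$ ensures. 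Hence $c_{i_0}=0$, a contradiction. In fact, a cleaner conclusion is obtained by induction on $N$: remove $T_{x_{i_0}}(V_{i_0})$ and note that the identity persists for the rest, but irreducibility then needs re-checking. So I would instead phrase the contradiction directly against the assumption that all $c_i\neq0$.

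The main obstacle is the asymptotic step. Two points need care. First, the explicit formula degenerates when some $\ell_v$ coincide, so I must choose the half-line, or a generic $\zeta$, so that all edge projections $(z-z')\cdot(\eta+i\zeta)$ are nonzero. Second, I must ensure that the cancellations between different simplices sharing other vertices cannot affect the leading term. This is exactly where the hypothesis is used: $z_0$ is a vertex of no other simplex, and it is strictly exposed, so it cannot be merely on a face of $P$.
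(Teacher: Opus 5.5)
Your proposal is correct and follows essentially the same route as the paper: the orthogonality identity of Lemma~\ref{lemma:uniqueness:main:identity}, complexification along $\cosh R\,\eta+{\rm i}\sinh R\,\zeta$ (your $s=\cosh R$), the explicit simplex integral, and domination by the isolated exposed vertex $z_0$, exactly as in Lemma~\ref{lemma:simplex:linind}. The differences are cosmetic: you allow a generic $\zeta\perp\eta$, which is legitimate since Remark~\ref{remark:extended:identity} applies with any orthonormal basis (e.g.\ $\mathrm{e}_1=\eta$), and you use the vertex-symmetric form of the simplex formula, whereas the paper fixes $\zeta=h(\eta)$ via $\mathrm{e}_1=\omega_0$ and secures nonvanishing denominators through the set $\widetilde S$.
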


\begin{definition}\label{definition:convexpolygon}
Let $\{x_i\}_{i=1}^n\subset\mathbb{R}^d$ be a finite set of points  such that there is no hyperplane in $\mathbb{R}^d$ that contains all the elements $\{x_i\}_{i=1}^n$. Let
    \begin{equation}\label{eq:convex:polytope}
        \mathcal{C}:=\left\{\sum_{i=1}^n c_ix_i\,:\,\sum_{i=1}^n c_i=1\mbox{ and }c_i\ge0\,\forall i\in[n]\right\}.
    \end{equation}
   Then the interior of the set $\mathcal{C}$ is said to be a convex polygon.
    If the set $\{x_i\}_{i=1}^n$ is irreducible in \eqref{eq:convex:polytope}, every $x_i$ is said to be a vertex of the convex polygon.
\end{definition}

The next result on the unique determination of a convex polygon in $\mathbb{R}^2$ is direct from Theorem \ref{theorem:simplex}. Fig. \ref{fig:Thms:21:23} (c) shows a variable order $\alpha$ satisfying the requirement of Theorem \ref{theorem:2D:convex}.
\begin{theorem}\label{theorem:2D:convex}
Let $d=2$, and let Assumption \ref{ass:alpha:general} with $\alpha=\alpha_{\rm in}$ hold. For each $i\in\{1,2\}$, let $\alpha_{\rm c}^i$ be a constant, $D^i$ a convex polygon such that $\overline{D^i}\subset\Om$, and  $\alpha^i(x) := \alpha_{\rm in}(x) + \alpha_{\rm c}^i\mathds{1}_{D^i}(x)$.
Suppose that both $\alpha=\alpha^1$ and $\alpha=\alpha^2$ satisfy Assumption \ref{ass:alpha:general}.
Let $U=U^i$ be the solution to problem \eqref{eq:2D3D:ibvp} with $\alpha=\alpha^i$ for $i=1,2$.
If $\p_\nu U^1(t,x)=\p_\nu U^2(t,x)$ for all $(t,x)\in I\times\p\Om$,
then we have $\alpha_{\rm c}^1=\alpha_{\rm c}^2$ and $D^1=D^2$.
\end{theorem}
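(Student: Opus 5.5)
Theorem \ref{theorem:2D:convex} will be deduced from Theorem \ref{theorem:simplex} by a geometric reduction. The plan is to show that the pair $(\alpha^1,\alpha^2)$ satisfies Assumption \ref{ass:alpha:simplices}. Theorem \ref{theorem:simplex} then gives $\alpha^1=\alpha^2$, hence $\alpha_{\rm c}^1\mathds{1}_{D^1}=\alpha_{\rm c}^2\mathds{1}_{D^2}$ almost everywhere. From this we read off the conclusion. If $\alpha_{\rm c}^1\neq0$, then $D^1$ is the (open) set where the function is nonzero, up to null sets, so $D^2=D^1$ as open convex sets and $\alpha_{\rm c}^2=\alpha_{\rm c}^1$. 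The degenerate case $\alpha_{\rm c}^1=\alpha_{\rm c}^2=0$, where $D^i$ is not determined, has to be excluded implicitly or noted.

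The difference is $\beta:=\alpha^1-\alpha^2=\alpha_{\rm c}^1\mathds{1}_{D^1}-\alpha_{\rm c}^2\mathds{1}_{D^2}$. It is piecewise constant on the regions cut out by the finitely many edges of the two convex polygons: $D^1\cap D^2$, $D^1\setminus\overline{D^2}$ and $D^2\setminus\overline{D^1}$. Each of these is a finite union of convex polygons, because $D^1\setminus\overline{D^2}$ is cut by the lines extending the edges of $D^2$. So I would triangulate the common refinement generated by all vertices of $D^1$, $D^2$ and the intersection points of their edges. This yields disjoint nondegenerate triangles $T_{x_i}(V_i)$, each oriented so that $\det V_i>0$, on which $\beta$ is constant, with $\beta=0$ off their closures up to null sets. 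I would then discard every triangle on which $\beta=0$. This keeps the constancy property, and the remaining set is irreducible in the sense that it exactly tiles $\operatorname{supp}\beta$.

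The key step, and the main obstacle, is the vertex condition. Assume $\beta\not\equiv0$ and let $P$ be the convex hull of $\operatorname{supp}\beta$. We need a vertex $v$ of $P$ that is a vertex of exactly one triangle. The key choice is the direction: pick a direction $\xi$ such that $x\mapsto x\cdot\xi$ attains its maximum over $\overline{D^1}\cup\overline{D^2}$ at a single point $v$, and such that the support of $\beta$ near $v$ is nonempty. There are two cases.
\begin{itemize}
\item If $D^1\neq D^2$, I would choose $\xi$ so that $v$ is a vertex of exactly one polygon, say of $D^1$, lying outside $\overline{D^2}$. Such a $\xi$ exists because some vertex of $\mathrm{conv}(D^1\cup D^2)$ belongs to only one polygon, or else the two vertex sets coincide and then $D^1=D^2$. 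Near $v$, $\beta=\alpha_{\rm c}^1\neq0$ on a convex angular sector of $D^1$ with opening less than $\pi$, and $v$ is a vertex of $P$.
\item If $D^1=D^2$ and $\alpha_{\rm c}^1\neq\alpha_{\rm c}^2$, then $\beta$ is a nonzero constant on $D^1$, and $v$ is any vertex of $D^1$.
\end{itemize}
In both cases the triangulation near $v$ can be chosen freely: there are no extra edge lines through $v$ except the two edges of the sector, so we may pick it with a single triangle having $v$ as a vertex. This is done by adding the diagonals of the sector region from the neighbouring vertices rather than fanning from $v$. Care is needed so that no intersection point of edges coincides with $v$; this holds since $v\notin\overline{D^2}$ in the first case. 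Once this single-triangle configuration at $v$ is in place, Assumption \ref{ass:alpha:simplices} holds and Theorem \ref{theorem:simplex} applies.
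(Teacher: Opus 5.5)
Your reduction is the same as the paper's (Lemma~\ref{lemma:2D:convex} followed by Theorem~\ref{theorem:simplex}). You use the same two cases and the same choice of a hull vertex lying in the closure of only one polygon. Your remark that $\alpha_{\rm c}^1=\alpha_{\rm c}^2=0$ must be excluded for the conclusion $D^1=D^2$ is correct; the paper passes over it.

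\textbf{The gap.} It lies in the step you call the key one: producing a decomposition in which $v$ is a vertex of exactly one triangle. Cutting off the ear at $v$ by the diagonal joining its two neighbouring refinement vertices is only legitimate if that triangle lies in a region where $\beta$ is constant. This is automatic when the region is convex, as in your case $D^1=D^2$. It fails for $D^1\setminus\overline{D^2}$, which may be nonconvex or even have a hole.

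Concretely, let $D^1$ be a triangle with vertices $v,y,z$, and let $D^2\Subset D^1$ with $\alpha_{\rm c}^2\neq0$.
\begin{itemize}
\item There are no edge intersections, so your refinement has no vertex on $[v,y]\cup[v,z]$ other than $y,z$.
\item The only candidate single triangle at $v$ is therefore $D^1$ itself. On it $\beta$ takes the two values $\alpha_{\rm c}^1$ and $\alpha_{\rm c}^1-\alpha_{\rm c}^2$.
\item Hence every admissible decomposition using only your vertex set puts $v$ in at least two triangles, and Lemma~\ref{lemma:simplex:linind} cannot be invoked at $v$.
\end{itemize}

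\textbf{The fix.} You need new vertices on the two edges at $v$, exactly as in the paper's Lemma~\ref{lemma:2D:simplepoly:onesimplex}. Take $y_0,z_0$ on the two edges of $D^1$ emanating from $v$ with $|y_0-v|=|z_0-v|=\varepsilon$. Choose $\varepsilon$ smaller than $\operatorname{dist}(v,D^2)$ and smaller than the lengths of those edges. Then the triangle with vertices $v,y_0,z_0$ lies in $D^1\setminus\overline{D^2}$, so $\beta$ is constant on it. The rest of $\operatorname{supp}\beta$ is a finite union of polygonal regions of constancy of $\beta$ whose closures do not contain $v$, and you can triangulate these region by region.

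\textbf{A smaller point.} In your first bullet, ``say of $D^1$'' is not without loss of generality when one amplitude vanishes. The paper's Case~2 has the same blind spot, since it asserts $\operatorname{supp}(\alpha^1-\alpha^2)=D^1\cup D^2$ whenever $\alpha_{\rm c}^1\neq\alpha_{\rm c}^2$. If $\alpha_{\rm c}^1=0\neq\alpha_{\rm c}^2$ and $D^2\subsetneq D^1$, the vertex your bullet produces is a vertex of $D^1$ near which $\beta\equiv0$. So the claim $\beta=\alpha_{\rm c}^1\neq0$ there is false, and $v$ need not lie in $\operatorname{supp}\beta$. This subcase is handled exactly like your second bullet: $\beta=-\alpha_{\rm c}^2\mathds{1}_{D^2}$ is then a nonzero constant on a single convex polygon, and any vertex of $D^2$ works.
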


In Theorems~\ref{theorem:simplex} and~\ref{theorem:2D:convex}, we have extended the analysis of Theorem~\ref{theorem:balls} to the case of polygonal inclusions. This extension is achieved under the technical conditions in Assumption~\ref{ass:alpha:simplices}. While the assumption imposes certain restrictions on the class of polygons in Theorem~\ref{theorem:simplex} when $d \geq 3$, we prove in Theorem~\ref{theorem:2D:convex} that, for $d = 2$, it is satisfied for a very general class of polygonal inclusions. The extension of these results to the recovery of multiple polygonal inclusions is discussed in Section~\ref{subsection:various:cases}.
These results not only cover the inclusions beyond the ball-shaped ones (in Theorem~\ref{theorem:balls}), but also confirm the potential of extending the analysis to other classes of inclusions, which we leave for future investigation.

Like Theorem~\ref{theorem:balls}, the proofs of Theorems~\ref{theorem:simplex} and~\ref{theorem:2D:convex} are based on the orthogonality relation~\eqref{Int}, and tools from complex analysis and directional asymptotic properties. However, in contrast to Theorem~\ref{theorem:balls}, the treatment of polygonal inclusions requires additional geometrical analysis due to the more intricate structure of polygonal inclusions.
The proof of Theorem \ref{theorem:2D:convex} is given in  Section \ref{subsect:2D:convex}. Further several other implications of Theorem \ref{theorem:simplex} are given in Sections \ref{section:rectangles} and \ref{subsection:various:cases}, which are obtained by proving that Assumption \ref{ass:alpha:simplices} is a necessary condition for various classes of candidates for $\alpha$.
Note also that we still have a uniqueness result without Assumption \ref{ass:alpha:simplices} but with a different constraint (cf. Theorem \ref{theorem:cones:manysides}).

\section{Preliminary results}\label{sec:direct}
In this section, we develop key analytic properties of problem \eqref{eq:2D3D:ibvp}, which are crucial to the analysis of the inverse problem in Sections \ref{sec:inverse} and \ref{section:polygons}.
We define the Hilbert space $H_\Delta(\Omega):=\{v\in H^1(\Omega):\ \Delta v\in L^2(\Omega)\}$ equipped with the norm
$$\|v\|_{H_\Delta(\Omega)}^2=\|v\|_{H^1(\Omega)}^2+\|\Delta v\|_{L^2(\Omega)}^2,\quad v\in H_{\Delta}(\Omega).$$
Note also that, following \cite[Lemma 2.2]{Kav}, 
for $v\in H_\Delta(\Omega)$, the normal derivative  $\partial_\nu v|_{\partial\Omega}$ of $v$ is well defined as an element of $H^{-\frac{1}{2}}(\partial\Omega)$. Moreover, there exists $C>0$ depending only on $\Om$ such that
\begin{equation}\label{normal}\|\partial_\nu v\|_{H^{-\frac{1}{2}}(\partial\Omega)}\leq C\|v\|_{H_\Delta(\Omega)},\quad v\in H_\Delta(\Omega).\end{equation}

Then we can state the following unique existence result on the forward problem.
\begin{theorem}
\label{theorem:analyticity}
Under Assumptions \ref{ass:alpha:general} and  \ref{ass:g:exptype}, problem \eqref{eq:2D3D:ibvp} admits a unique solution\\  $U\in C^1([0,\infty);L^2(\Omega))\cap C([0,\infty);H_\Delta(\Omega))$. Moreover, the map $t\mapsto U(t,\cdot)$ is analytic on $\mathbb{R}_+$
as a map taking values in $H_\Delta(\Omega)$, $t\mapsto U(t,\cdot)e^{-pt} \in L^1(0,\infty;H_\Delta(\Omega))$ and the Laplace transform in time $\widehat{U}(p,\cdot):=\int_0^\infty U(t,\cdot)e^{-pt}\d t$ of $U$ belongs to $H_\Delta(\Omega)$ for all $p>0$.
\end{theorem}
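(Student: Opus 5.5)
We reduce to a problem with homogeneous Dirichlet data and then solve it by a Laplace-domain (resolvent) construction. Set $G(t,x):=t^k e^{x\cdot\omega_0}$ for $x\in\overline\Om$. Since $e^{x\cdot\omega_0}$ is harmonic ($|\omega_0|=1$), we have $\Delta G=0$. Writing $U=G+V$, the function $V$ must solve
\begin{equation*}
\partial_t^{\alpha(x)}V-\Delta V=F:=-\frac{\Gamma(k+1)}{\Gamma(k+1-\alpha(x))}\,t^{k-\alpha(x)}e^{x\cdot\omega_0},
\end{equation*}
with $V=0$ on $\p\Om$ and $V(0)=0$. The source satisfies $F\in C^1([0,\infty);L^\infty(\Om))$ because $k\ge2$, and $F(0)=0$.

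We construct $V$ through its Laplace transform. For $p$ in a sector $\Sigma_\theta=\{|\arg p|<\theta\}$ with $\theta>\pi/2$, we solve the elliptic problem
\begin{equation*}
-\Delta\widehat V+p^{\alpha(x)}\widehat V=\widehat F(p,\cdot),\qquad \widehat V|_{\p\Om}=0,
\end{equation*}
where $\widehat F(p,x)=-\Gamma(k+1)p^{-k-1+\alpha(x)}e^{x\cdot\omega_0}$. Unique solvability in $H_0^1(\Om)$ follows from a coercivity estimate for the sesquilinear form $\int\nabla u\cdot\nabla\bar v+\int p^{\alpha}u\bar v$. Multiplying by a rotation $e^{-i\beta}$ and using $\arg p^{\alpha(x)}=\alpha(x)\arg p$, we need all the values $\alpha(x)\arg p$ to lie in a common open half plane of angular width below $\pi$.

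This is exactly where Assumption~\ref{ass:alpha:general} enters. The condition $\operatorname{ess\,sup}\alpha<2\operatorname{ess\,inf}\alpha$ allows $\theta$ slightly bigger than $\pi/2$ while keeping the spread $(\sup\alpha-\inf\alpha)\theta<\pi/2$. Together with $\inf\alpha\cdot\theta$, this gives a resolvent bound of the form
\begin{equation*}
\|\widehat V(p)\|_{H^1}\le C|p|^{-\inf\alpha}\,\|\widehat F(p)\|_{L^2},
\end{equation*}
via a lower bound on $\Re(e^{-i\beta}p^{\alpha(x)})\ge c|p|^{\inf\alpha}$. Note that $\Delta\widehat V=p^\alpha\widehat V-\widehat F\in L^2$, so $\widehat V\in H_\Delta(\Om)$ with a norm decaying like a negative power of $|p|$ beyond $|p|^{-2}$, since $k\ge2$. \textbf{I expect this step to be the main obstacle:} choosing $\theta$ and $\beta$ uniformly in $x$, and obtaining the bound for $|p|$ both small and large (small $|p|$ requires Poincaré and the decay of $\widehat F$ is then a growth $|p|^{-k-1+\alpha}$).

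Next we define $V(t)=\frac1{2\pi i}\int_{\gamma}e^{pt}\widehat V(p)\,dp$ along a Hankel contour inside $\Sigma_\theta$. The decay in $|p|$ gives absolute convergence in $H_\Delta(\Om)$, and since $\theta>\pi/2$ the integral extends holomorphically to $t$ in a sector. In particular $t\mapsto V(t)$ is real-analytic on $\mathbb R_+$ with values in $H_\Delta$. Continuity at $t=0$ with $V(0)=0$, and $C^1$ in $L^2$, follow from the extra decay by differentiating under the integral, for which $k\ge2$ supplies an additional power. The growth of $V$ in $t$ is polynomial, which we see by deforming the contour toward the origin, so $e^{-pt}V\in L^1(0,\infty;H_\Delta)$ for $p>0$ and its Laplace transform recovers $\widehat V$. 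The same properties hold trivially for $G$, since $\widehat G=k!\,p^{-k-1}e^{x\cdot\omega_0}\in H_\Delta$.

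To see that $V$ solves the equation, we take Laplace transforms. Using $\widehat{\partial_t^{\alpha}V}=p^{\alpha}\widehat V$ (valid because $V(0)=0$) together with the injectivity of the Laplace transform, the equation holds. Uniqueness is proved the same way: for a difference $W$ with zero data, the Laplace transform of $W$ solves the homogeneous elliptic problem for $p>0$, hence vanishes, hence $W=0$. Finally, $\p_\nu U\in C([0,\infty);H^{-1/2}(\p\Om))$ follows from \eqref{normal}.
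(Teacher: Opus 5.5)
Your route is the one the paper imports from \cite{Kian:2018:TFD}, with $H^2(\Omega)$ replaced by $H_\Delta(\Omega)$: lift the Dirichlet datum, solve $(-\Delta+p^{\alpha})\widehat V=\widehat F$, $\widehat V|_{\partial\Omega}=0$, in a sector $|\arg p|<\theta$ with $\theta>\pi/2$, invert along a Hankel contour, and read off time-analyticity from the sector.

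The lifting step is wrong as written. For a real unit vector $\omega_0$, $\Delta e^{x\cdot\omega_0}=|\omega_0|^2e^{x\cdot\omega_0}=e^{x\cdot\omega_0}\neq 0$; this is exactly why $v_0=e^{x\cdot\omega_0}$ solves $-\Delta v_0+v_0=0$ (Remark~\ref{remark:v0:explicit}). So your $U=G+V$ solves $\partial_t^{\alpha(x)}U-\Delta U=-t^ke^{x\cdot\omega_0}$, not \eqref{eq:2D3D:ibvp}. The correct source is $F=-\partial_t^{\alpha(x)}G+\Delta G$, i.e. $\widehat F(p)=k!\,p^{-k-1}(1-p^{\alpha})e^{x\cdot\omega_0}$, which is the right-hand side of \eqref{l2b}. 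This is easily repaired. For $|p|\ge1$ the bound $\|\widehat F(p)\|_{L^2(\Omega)}\lesssim|p|^{-k-1+\overline\alpha}$ is unchanged. Near $p=0$ the singularity becomes $|p|^{-k-1}$, which still gives $\|V(t)\|_{H_\Delta(\Omega)}=O(t^k)$.

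Two smaller corrections to the resolvent step, with $\underline\alpha=\mathrm{ess\,inf}\,\alpha$ and $\overline\alpha=\mathrm{ess\,sup}\,\alpha$. First, coercivity is not where Assumption~\ref{ass:alpha:general} is needed. The phases to place in one open half-plane are $0$ (from the gradient term, which you omitted) together with $\alpha(x)\arg p$. These span an arc of length at most $\overline\alpha|\arg p|<\pi$, so the form is coercive in every sector $|\arg p|\le\theta<\pi$. Second, the energy estimate gives $\|\nabla\widehat V\|\lesssim|p|^{-\underline\alpha/2}\|\widehat F\|$ and $\|\Delta\widehat V\|\lesssim|p|^{\overline\alpha-\underline\alpha}\|\widehat F\|$. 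Hence the $H_\Delta$-norm decays like $|p|^{-k-1+2\overline\alpha-\underline\alpha}$, not faster than $|p|^{-2}$. This is still integrable, as is $|p|\,\|\widehat V(p)\|_{L^2}$, so your continuity and $C^1$ conclusions survive.

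The genuine gap is uniqueness. The class $C^1([0,\infty);L^2(\Omega))\cap C([0,\infty);H_\Delta(\Omega))$ imposes no growth condition as $t\to\infty$. So the difference $W$ of two solutions need not be of exponential order, and its Laplace transform need not exist. As written, you only get uniqueness among exponentially bounded solutions.

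What is missing is a causality/localization step showing that a solution of the homogeneous problem vanishes on every $(0,T)$. One way:
\begin{itemize}
\item Take $\chi\in C^\infty(\mathbb{R})$ with $\chi=1$ on $(-\infty,T]$ and $\chi=0$ on $[T+1,\infty)$.
\item The Caputo derivative at time $t$ only involves $[0,t]$. Hence $\partial_t^{\alpha(x)}(\chi W)-\Delta(\chi W)=H$, with $H$ bounded in $L^2(\Omega)$ and supported in $[T,\infty)$.
\item Therefore $\widehat{\chi W}(p)=(A+p^{\alpha})^{-1}\widehat H(p)=O(e^{-pT})$ as $p\to+\infty$, where $A=-\Delta$ with Dirichlet condition.
\item A Paley--Wiener (or Titchmarsh) argument for the compactly supported $\chi W$ then forces $\chi W=0$ on $(0,T)$.
\end{itemize}
This is the ingredient the paper obtains by invoking \cite{Ki1} alongside \cite{Kian:2018:TFD}.
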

\begin{proof}
Let $A$ be the unbounded operator with its domain $D(A):=\{v\in H^1_0(\Omega):\ \Delta v\in L^2(\Omega)\}$, defined by $Av=-\Delta v$ for $v\in D(A)$. In view of \cite[Proposition 2.1]{Kian:2018:TFD},  for all $p\in\mathbb C\setminus(-\infty,0]$, the map $A+p^\alpha$ is boundedly invertible. Moreover, by \cite[Section 6.2]{Ev},  $D(A)$ embeds continuously into $H_\Delta(\Omega)$ and, for all $v\in L^2(\Omega)$ and $p\in\mathbb C\setminus(-\infty,0]$, we obtain
\begin{equation}\label{esti}\|(A+p^\alpha)^{-1}v\|_{H_\Delta(\Omega)}\leq C(\|A(A+p^\alpha)^{-1}v\|_{L^2(\Omega)}+\|(A+p^\alpha)^{-1}v\|_{L^2(\Omega)}),\end{equation}
with $C>0$ depending only on $\Omega$.
Then the unique existence of solutions can be deduced by combining the arguments of \cite[Proposition 3.1]{Kian:2018:TFD} with \cite[p. 16]{Ki1},  similarly to \cite[Theorem 3.1]{HJK:2025:ISDVO}. The time analyticity is direct from \cite[Lemma 3.2]{Kian:2018:TFD}. Note that in contrast to \cite[Proposition 3.1]{Kian:2018:TFD}, due to the weaker Lipschitz regularity of $\Omega$,  we replace the space $H^2(\Omega)$ by $H_\Delta(\Omega)$ in the proof and  applies the estimate \eqref{esti}.
\end{proof}

By combining Theorem  \ref{theorem:analyticity} with \eqref{normal}, we deduce that $\p_\nu U\in C([0,\infty);H^{-\frac 1 2}(\p\Omega))$ and $t\mapsto\p_\nu U(t,\cdot)|_{\partial\Om}$ is analytic on $\mathbb{R}_+$
as a map taking values in $H^{-\frac 1 2}(\p\Omega)$.
By \cite[Proposition 3.1]{Kian:2018:TFD}, for all $p>0$, the Laplace transform $\widehat{U}(p,\cdot)$ in time of $U(t,\cdot)$ is the unique solution of the following boundary value problem
\begin{equation}\label{eq:afterLaptrans}
\left\{\begin{aligned}
- \Delta_x \widehat{U}(p,\cdot) + p^{\alpha(x)}\widehat{U}(p,\cdot)&=0,\quad\mbox{in } \Omega,\\
\widehat{U}(p,\cdot)&=\widehat{g}(p,\cdot),\quad\mbox{on }\partial\Omega.
\end{aligned}\right.
\end{equation}
Actually the solution $U$ of problem \eqref{eq:2D3D:ibvp} is constructed from the inversion of the Laplace transform $\widehat{U}(p,\cdot)$ of the solution  of \eqref{eq:afterLaptrans}. Moreover, if $g$ satisfies Assumption \ref{ass:g:exptype}, then
\begin{equation*}
x\mapsto\widehat{g}(p,x)= k!p^{-k-1} e^{x\cdot\omega_0}\in H^{\frac 1 2}(\partial\Omega),\quad\forall p>0.
\end{equation*}

We also use the following two elementary results.
\begin{lemma}\label{l1} Let Assumption \ref{ass:alpha:general} hold. Then there exists $C>0$ depending on $\Omega$ and $\alpha$ such that
\begin{align}\label{l1a}
\|p^\alpha -1\|_{L^\infty(\Omega)}&\leq C|p-1|,\quad p\in[\tfrac12,\tfrac32],\\
\label{l1b}
\left\|(p-1)^{-1}(p^\alpha -1)-\alpha\right\|_{L^\infty(\Omega)}&\leq C|p-1|,\quad p\in[\tfrac12,\tfrac32].
\end{align}
\end{lemma}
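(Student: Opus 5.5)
The plan is to work pointwise in $x\in\Om$ and to handle the function $s\mapsto p^{s}=e^{s\ln p}$ by elementary calculus, uniformly for $s\in(0,1)$. By Assumption \ref{ass:alpha:general}, $\alpha(x)\in(0,1)$ for a.e.\ $x$, so it suffices to show that for every $s\in[0,1]$ and $p\in[\tfrac12,\tfrac32]$,
\begin{equation*}
|p^s-1|\le C|p-1|,\qquad \left|\frac{p^s-1}{p-1}-s\right|\le C|p-1| \quad (p\neq1),
\end{equation*}
with $C$ independent of $s$. Taking the essential supremum over $x$ then gives \eqref{l1a} and \eqref{l1b}. At $p=1$ the quotient in \eqref{l1b} is read as its limit $s$, and both sides vanish there.

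For \eqref{l1a}, fix $s\in[0,1]$ and set $f_s(p)=p^s$. The mean value theorem gives $p^s-1=s\,\xi^{s-1}(p-1)$ for some $\xi$ between $1$ and $p$, so $\xi\in[\tfrac12,\tfrac32]$. Since $0\le s\le1$ and $\xi\ge\tfrac12$, we have $\xi^{s-1}\le 2^{1-s}\le2$. Hence $|p^s-1|\le 2|p-1|$.

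For \eqref{l1b}, use Taylor's formula with Lagrange remainder about $p=1$:
\begin{equation*}
p^s=1+s(p-1)+\tfrac12 s(s-1)\eta^{s-2}(p-1)^2
\end{equation*}
for some $\eta$ between $1$ and $p$. Dividing by $p-1$ gives
\begin{equation*}
\left|\frac{p^s-1}{p-1}-s\right|=\tfrac12|s(s-1)|\,\eta^{s-2}|p-1|\le \tfrac18\cdot 2^{2-s}|p-1|\le \tfrac12|p-1|,
\end{equation*}
where we used $|s(s-1)|\le\tfrac14$ and $\eta\ge\tfrac12$.

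Both bounds therefore hold with the absolute constant $C=2$. This is stronger than claimed, since the constant depends neither on $\Om$ nor on $\alpha$; measurability of $\alpha$ only ensures that $p^{\alpha}$ is a well-defined element of $L^\infty(\Om)$. There is no real obstacle. The only point needing care is that the constants from the mean value and Taylor steps are uniform in the exponent $s$, and this holds because $p$ is kept away from $0$ on $[\tfrac12,\tfrac32]$.
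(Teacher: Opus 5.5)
Your proof is correct and follows essentially the paper's route: a second-order Taylor expansion of $\tau\mapsto\tau^{\alpha(x)}$ about $\tau=1$, with a bound on $\tau^{\alpha(x)-2}$ that is uniform for $\tau\in[\tfrac12,\tfrac32]$. The only difference is that the paper uses the integral form of the remainder and derives both estimates from that one expansion, whereas you use the mean value theorem for the first estimate and the Lagrange remainder for the second, which gives the explicit absolute constant $C=2$.
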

\begin{proof}
   For almost every $x\in\Omega$ and all $p\in[\frac12,\frac32]$, applying Taylor's formula to the map $\tau\mapsto\tau^{\alpha(x)}$ gives
 \begin{equation}\label{l1c}p^{\alpha(x)}=1+\alpha(x)(p-1)+(p-1)^2\int_0^1\frac{(1-s)^2\alpha(x)(\alpha(x)-1)}{2}((1-s)+sp)^{\alpha(x)-2}\d s.\end{equation}
Meanwhile, for  all $p\in[\frac12,\frac32]$, $((1-s)+sp)^{\alpha-2} \leq 4$, and thus we get
$$\begin{aligned}&\left\|\int_0^1\frac{(1-s)^2\alpha(\alpha-1)}{2}((1-s)+sp)^{\alpha-2}\d s\right\|_{L^\infty(\Omega)}\leq \|\alpha\|_{L^\infty(\Omega)}(1+\|\alpha\|_{L^\infty(\Omega)}).\end{aligned}$$
By combining this with the identity \eqref{l1c}, we easily deduce \eqref{l1a}-\eqref{l1b}.
  \end{proof}
\begin{lemma}\label{l2} Let the assumptions of Theorem \ref{theorem:analyticity} hold. Then, we have
\begin{equation}\label{l2a}
\sup_{p\in[\frac12,\frac32]}\|\widehat{U}(p,\cdot)\|_{H^1(\Om)}<\infty.
\end{equation}

\end{lemma}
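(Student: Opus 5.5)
The plan is to lift the boundary data and reduce \eqref{eq:afterLaptrans} to a problem with homogeneous Dirichlet condition, to which the coercive $H^1_0$ estimate applies uniformly in $p\in[\frac12,\frac32]$.

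\textbf{Lifting.} Let $G(x):=e^{x\cdot\omega_0}$, which is smooth on $\overline\Om$ and satisfies $\Delta G=|\omega_0|^2G=G$. Since $\widehat g(p,\cdot)=k!p^{-k-1}G|_{\p\Om}$, we write
$$\widehat U(p,\cdot)=k!p^{-k-1}G+W(p,\cdot),$$
where $W(p,\cdot)\in H^1_0(\Om)$ solves
$$-\Delta W+p^{\alpha}W=F_p:=k!p^{-k-1}\bigl(\Delta G-p^{\alpha}G\bigr)=k!p^{-k-1}(1-p^{\alpha})G .$$
This decomposition is legitimate: $\widehat U(p,\cdot)$ is the unique solution of \eqref{eq:afterLaptrans} in $H_\Delta(\Om)$ (Theorem \ref{theorem:analyticity}), and $W$ is then the unique $H^1_0$ solution, i.e.\ $W=(A+p^\alpha)^{-1}F_p$ in the notation of that proof.

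\textbf{Uniform $H^1_0$ bound.} For $p\in[\frac12,\frac32]$ we have $p^{\alpha(x)}>0$ almost everywhere. Testing the equation for $W$ with $W$ itself gives
$$\|\nabla W\|_{L^2}^2\le\int_\Om\bigl(|\nabla W|^2+p^\alpha|W|^2\bigr)\d x=\int_\Om F_pW\d x\le \|F_p\|_{L^2}\|W\|_{L^2}.$$
Poincar\'e's inequality then yields $\|W\|_{H^1(\Om)}\le C_\Om\|F_p\|_{L^2(\Om)}$, with $C_\Om$ independent of $p$.

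\textbf{Bounding the source.} By Lemma \ref{l1}, \eqref{l1a}, we have $\|1-p^\alpha\|_{L^\infty}\le C|p-1|\le C/2$. Also $p^{-k-1}\le 2^{k+1}$ on this interval. Hence
$$\sup_{p\in[\frac12,\frac32]}\|F_p\|_{L^2}\le C\,k!\,2^{k+1}\|G\|_{L^2(\Om)}<\infty .$$
The same factor bounds $\|k!p^{-k-1}G\|_{H^1}$ uniformly, and adding the two pieces gives \eqref{l2a}.

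The argument contains no real obstacle. The only point needing care is the identification of $\widehat U(p,\cdot)$ with the lifted $H^1$ solution on a merely Lipschitz domain, and this follows from the uniqueness statement already proved.
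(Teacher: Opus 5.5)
Your proposal is correct and follows the paper's proof essentially step for step. You use the same lifting $\widehat U(p,\cdot)=k!p^{-k-1}e^{x\cdot\omega_0}+W_p$, the same energy estimate for $W_p$ combined with Poincar\'e's inequality, and a uniform bound on the source term. The only minor difference is that you control $|1-p^\alpha|$ through \eqref{l1a}, whereas the paper bounds it directly.
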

\begin{proof}
Fix $p\in[\frac12,\frac32]$. We split $\widehat{U}(p,\cdot)$ into
$$\widehat{U}(p,x)=k!p^{-k-1} e^{x\cdot\omega_0} +W_p(x),$$
where $W_p\in H_\Delta(\Omega)$ solves \begin{equation}\label{l2b}-\Delta W_p(x)+p^{\alpha(x)} W_p(x)=-k!p^{-k-1}(p^\alpha-1) e^{x\cdot\omega_0},\quad \mbox{in }\Omega.\end{equation}
Clearly,
\begin{equation}\label{l2c}\|k!p^{-k-1} e^{x\cdot\omega_0}\|_{H^1(\Omega)}\leq k!2^{k+1}\| e^{x\cdot\omega_0}\|_{H^1(\Omega)},\quad p\in [\tfrac12,\tfrac32].\end{equation}
Moreover, multiplying \eqref{l2b} by $W_p$, integrating over the domain $\Omega$ and integrating by parts yield
$$\begin{aligned}\int_\Om |\nabla W_p|^2\d x&\leq \int_\Om (|\nabla W_p|^2+p^\alpha |W_p|^2)\d x= \int_\Om (-\Delta W_p+p^\alpha W_p)W_p\d x\\
&= -\int_\Om k!p^{-k-1}(p^\alpha-1) e^{x\cdot\omega_0}W_p\d x\leq 2^{k}k! \| e^{x\cdot\omega_0}\|_{L^2(\Omega)}\| W_p\|_{L^2(\Omega)}.\end{aligned}$$
Thus, by Poincar\'e inequality, there exists $C>0$ depending only on $\Omega$ such that
$$\|W_p\|_{H^1(\Omega)}^2\leq C2^{k}k!\| e^{x\cdot\omega_0}\|_{L^2(\Omega)}\| W_p\|_{H^1(\Omega)}.$$
Therefore, we obtain
$$\|W_p\|_{H^1(\Omega)}\leq C2^{k}k!\| e^{x\cdot\omega_0}\|_{L^2(\Omega)},\quad p\in [\tfrac12,\tfrac32].$$
By combining this with \eqref{l2c}, we obtain \eqref{l2a}.
\end{proof}

The following linearization result plays a central role in the analysis.
\begin{definition}
\label{def:gen:derivative}
    Let $X$ be a real normed vector space and $O$ be any nonempty open subspace of $\mathbb{R}$. A map $f : O \to X$ is said to be differentiable at $x \in O$ if there exists an element $f'(x)$ of $X$, called the {\rm(}generalized{\rm)} derivative of $f$ at $x$, such that
    $$\lim_{\varepsilon\to 0}\frac{\|f(x+\varepsilon) -f(x)-\varepsilon f'(x)\|_{X}}{\varepsilon}=0.$$
\end{definition}

\begin{theorem}[Linearization]
\label{theorem:flux:freqisone:asympt:general}
Let $\alpha$ and $g$ satisfy Assumptions \ref{ass:alpha:general} and \ref{ass:g:exptype}, respectively.
Let $U$ be the solution of problem \eqref{eq:2D3D:ibvp}.
Then the map $G:(0,\infty)\to H^{-\frac{1}{2}}(\partial\Omega)$ defined by
$$G(p)(y):=\p_\nu\widehat{U}(p,y),\quad\forall p\in(0,\infty),\,y\in\p\Om$$
is differentiable at $p=1$ $($in the sense of Definition \ref{def:gen:derivative} with $X=H^{-\frac{1}{2}}(\partial\Omega)$$)$ and the derivative $G'(1)$ is given by
\begin{equation}\label{eq:flux:lowfreq:asympt:pnear1}
G'(1)=-(k+1)! \p_\nu v_{0} - k! \p_\nu v_1,
\end{equation}
where $v_{0},v_1\in H_\Delta(\Om)$ are the unique solutions of
\begin{equation}\label{eq:bvp:v0}
\left\{
\begin{aligned}
-\Delta v_{0} + v_{0}&=0,\quad\mbox{in }\Om,\\
v_{0}&=e^{x\cdot\omega_0},\quad\mbox{on }\partial\Omega
\end{aligned}
\right.
\quad\mbox{and}\quad\left\{
\begin{aligned}
-\Delta v_{1}+v_{1}&= \alpha v_{0},\quad\mbox{in }\Om,\\
v_{1}&=0,\quad\mbox{on }\partial\Omega.
\end{aligned}
\right.
\end{equation}
\end{theorem}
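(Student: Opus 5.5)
We split $\widehat U(p,\cdot)$ around $p=1$ and identify the first-order terms. Set $h(p):=k!p^{-k-1}$, so that $\widehat g(p,\cdot)=h(p)e^{x\cdot\omega_0}$, $h(1)=k!$ and $h'(1)=-(k+1)!$.

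\textbf{Step 1: the expected expansion.} Let $v_0$ and $v_1$ be as in \eqref{eq:bvp:v0}. These exist uniquely in $H_\Delta(\Omega)$: lift the boundary datum by $e^{x\cdot\omega_0}$, which is smooth, and use Lax--Milgram together with the bound \eqref{esti} with $p=1$. The natural candidate is
$$\widehat U(p,\cdot)=h(p)v_0+(p-1)\bigl(h'(1)v_0-k!\,v_1\bigr)+R_p .$$
The factor $h(p)v_0$ carries the boundary datum. We have $h(p)-h(1)-h'(1)(p-1)=O(|p-1|^2)$, so the $v_0$ contribution is already differentiable with derivative $h'(1)v_0$. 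It therefore suffices to prove that $w_p:=\widehat U(p,\cdot)-h(p)v_0$ satisfies
$$\|w_p+(p-1)k!\,v_1\|_{H_\Delta(\Omega)}=o(|p-1|).$$
By \eqref{normal}, this gives differentiability of $G$ in $H^{-1/2}(\partial\Omega)$ with $G'(1)=h'(1)\partial_\nu v_0-k!\,\partial_\nu v_1=-(k+1)!\,\partial_\nu v_0-k!\,\partial_\nu v_1$, which is \eqref{eq:flux:lowfreq:asympt:pnear1}.

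\textbf{Step 2: equations for the remainders.} Since $-\Delta v_0=-v_0$, the function $w_p$ lies in $H^1_0(\Omega)$ and satisfies
$$-\Delta w_p+p^{\alpha}w_p=-h(p)(p^\alpha-1)v_0 .$$
Put $z_p:=w_p+(p-1)k!\,v_1\in H_0^1(\Omega)$. Using $-\Delta v_1=\alpha v_0-v_1$, a direct computation gives
$$-\Delta z_p+p^\alpha z_p=-h(p)(p^\alpha-1)v_0+(p-1)k!\,\alpha v_0+(p-1)k!\,(p^\alpha-1)v_1=:F_p .$$
Rewrite the first two terms as
$$-(p-1)\Bigl[h(p)\Bigl(\tfrac{p^\alpha-1}{p-1}-\alpha\Bigr)+(h(p)-k!)\,\alpha\Bigr]v_0 .$$
Now apply Lemma \ref{l1}. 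By \eqref{l1b} the first bracketed term is $O(|p-1|)$ in $L^\infty(\Omega)$, and $h(p)-k!=O(|p-1|)$. The last term of $F_p$ is $O(|p-1|^2)$ by \eqref{l1a}, and $v_0,v_1\in L^2(\Omega)$. Hence $\|F_p\|_{L^2(\Omega)}\le C|p-1|^2$ for $p\in[\frac12,\frac32]$.

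\textbf{Step 3: uniform stability.} This is the main point, and we need it uniformly in $p\in[\frac12,\frac32]$. Testing the equation with $z_p$ and using $p^\alpha>0$ together with the Poincar\'e inequality, exactly as in the proof of Lemma \ref{l2}, gives $\|z_p\|_{H^1(\Omega)}\le C\|F_p\|_{L^2(\Omega)}$. For the $\Delta$ part, $\Delta z_p=p^\alpha z_p-F_p$, and $\|p^\alpha\|_{L^\infty}\le 2$, so $\|\Delta z_p\|_{L^2(\Omega)}\le C\|F_p\|_{L^2(\Omega)}$. Thus
$$\|z_p\|_{H_\Delta(\Omega)}\le C|p-1|^2=o(|p-1|).$$

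No $H^2$ regularity is used, so a Lipschitz domain suffices. The only delicate points are checking that all lifts lie in $H_\Delta(\Omega)$ and that the constants are uniform in $p$; both follow from the energy estimate and Lemma \ref{l1}.
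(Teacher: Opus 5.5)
Your proof is correct, but it organizes the remainder differently from the paper.

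\textbf{The paper's route.} It freezes the operator. It sets $V_p=\widehat{U}(p,\cdot)-\widehat{U}(1,\cdot)+(p-1)[(k+1)!v_0+k!v_1]$, which solves $(-\Delta+1)V_p=f_p$ with inhomogeneous Dirichlet data $g_p$ and source $f_p=(1-p^{\alpha})\widehat{U}(p,\cdot)+(p-1)k!\alpha v_0$. Because $f_p$ contains the unknown $\widehat{U}(p,\cdot)$, two extra steps are needed before \eqref{l1b} can be applied: the uniform bound of Lemma \ref{l2}, and a second elliptic estimate giving $\|\widehat{U}(p,\cdot)-\widehat{U}(1,\cdot)\|_{H_\Delta(\Omega)}\to0$ (see \eqref{tr}).

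\textbf{Your route.} You subtract the exact boundary lift $h(p)v_0$ and the corrector $(p-1)k!\,v_1$; your $w_p$ is exactly the function $W_p$ from the proof of Lemma \ref{l2}. You keep the $p$-dependent operator $-\Delta+p^{\alpha}$ and move the cross term $(p-1)k!(p^{\alpha}-1)v_1$ into the source. Then $F_p$ involves only the fixed functions $v_0,v_1$, and Lemma \ref{l1} gives $\|F_p\|_{L^2(\Omega)}=O(|p-1|^2)$ at once. A single coercivity estimate finishes the argument; it is uniform in $p\in[\frac12,\frac32]$ because $0<p^{\alpha}\le\frac32$.

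\textbf{What each buys.} You effectively merge Lemma \ref{l2} and the continuity step \eqref{tr} into one energy estimate with homogeneous boundary data, and the quadratic size of the remainder is visible immediately. The paper's route gets a stability constant that is trivially independent of $p$, since there is one fixed operator and a quoted elliptic estimate, at the price of the two-step argument.

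\textbf{Two minor fixes.}
First, the displayed \emph{natural candidate} in your Step 1 should have $h(1)v_0$ in place of $h(p)v_0$. Otherwise the term $(p-1)h'(1)v_0$ is counted twice, although your subsequent reduction is right.
Second, the reduction to $z_p$ tacitly uses $w_1=0$, i.e.\ $\widehat{U}(1,\cdot)=k!\,v_0$, and you should state this. It follows from your Step 3 bound at $p=1$, where $F_1=0$.
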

\begin{proof}
For any $p>0 $ and $ x\in\Om$, let
$$V_p(x):=\widehat{U}(p,x)-\widehat{U}(1,x) + (p-1)\left[(k+1)! v_0(x) + k! v_1(x)\right].$$
Then, for each $p>0$,  $V_p\in H_\Delta(\Omega)$ is the unique weak solution to
\begin{equation}\label{eq:V:problem}
\left\{
\begin{aligned}
(-\Delta + 1)V_p &=f_p,\quad \mbox{in }\Om,\\
V_p &=g_p,\quad \mbox{on }\partial\Omega,
\end{aligned}
\right.
\end{equation}
with
$f_p=(1-p^{\alpha(x)})\widehat{U}(p,\cdot) + (p-1)k!\alpha v_0$ and $g_p=k!e^{x\cdot\omega_0}(p^{-k-1}-1) + (p-1)(k+1)! e^{x\cdot\omega_0}.$
By classical properties of elliptic equations  \cite[Section 6.2]{Ev}, there exists  $C>0$ depending only on $\Omega$ such that
$$\|V_p\|_{H_\Delta(\Om)}\leq C(\|f_p\|_{L^2(\Om)}+\|g_p\|_{H^{\frac 1 2}(\p\Om)}),\quad p>0.$$
For all $p\in[\frac12,\frac32]$, by the estimate \eqref{normal}, we get
\begin{equation}\label{tata}\begin{aligned}&\left\|\frac{ G(p)-G(1)}{p-1}+(k+1)! \p_\nu v_{0} + k! \p_\nu v_1\right\|_{H^{-\frac 1 2}(\p\Om)}=|p-1|^{-1}\|\partial_\nu V_p\|_{H^{-\frac 1 2}(\p\Om)}\\
\leq & C|p-1|^{-1}\|V_p\|_{H_\Delta(\Om)}\leq C|p-1|^{-1}(\|f_p\|_{L^2(\Om)}+\|g_p\|_{H^{\frac 1 2}(\p\Om)}),\end{aligned}\end{equation}
where $C>0$ depends only on $\Om$ that may change from line to line.
Thus the proof will be completed if
the following two relations hold:
\begin{align}
    &\lim_{p\to1}|p-1|^{-1}\|f_p\|_{L^2(\Om)}=\lim_{p\to1}|p-1|^{-1}\left\|(1-p^{\alpha})\widehat{U}(p,\cdot) + (p-1)k!\alpha v_0\right\|_{L^2(\Om)}=0,\label{eq:V:inside}\\
    &\lim_{p\to1} |p-1|^{-1}\|g_p\|_{H^{\frac 1 2}(\p\Om)}= \lim_{p\to1}\left|k!\left(\frac{p^{-k-1}-1}{p-1} + (k+1)\right)\right|\left\| e^{x\cdot\omega_0}\right\|_{H^{\frac 1 2}(\p\Om)}=0.\label{eq:V:boundary}
\end{align}
The relation \eqref{eq:V:boundary} follows directly and we only need to prove \eqref{eq:V:inside}. To this end, fix $p>0$, and let $W(p)=\widehat{U}(p,\cdot)-\widehat{U}(1,\cdot)\in H_\Delta(\Om)$. Note that $W(p)$ solves
$$
\left\{
\begin{aligned}
-\Delta W(p)+ W(p) &= (1-p^{\alpha(x)})\widehat{U}(p,\cdot) ,\quad \mbox{in }\Om,\\
W(p) &=k!e^{x\cdot\omega_0}(p^{-k-1}-1) ,\quad \mbox{on }\partial\Omega.
\end{aligned}
\right.
$$
Then, there exists $C>0$ depending only on $\Omega$ such that
$$\|W(p)\|_{H_\Delta(\Om)}\leq C(\|(1-p^{\alpha(x)})\widehat{U}(p,\cdot)\|_{L^2(\Om)}+|p^{-k-1}-1|\|e^{x\cdot\omega_0}\|_{H^{\frac 1 2}(\p\Om)}).$$
By combining this estimate with \eqref{l1a} and \eqref{l2a}, we deduce
$$\|W(p)\|_{H_\Delta(\Om)}\leq C(|p-1|+|p^{-k-1}-1|\|e^{x\cdot\omega_0}\|_{H^{\frac 1 2}(\p\Om)}),\quad \forall p\in[\tfrac12,\tfrac32].$$
Thus,  we obtain
\begin{equation}\label{tr}\lim_{p\to1}\|\widehat{U}(p,\cdot)-\widehat{U}(1,\cdot)\|_{H_\Delta(\Omega)}=  \lim_{p\to1}\|W(p)\|_{H_\Delta(\Omega)}=0.\end{equation}
By noting the identity $\widehat{U}(1,\cdot)=k!v_0$ and \eqref{l1b}, we get for $p\in[\frac12,\frac32]$,
\begin{align*}
&|p-1|^{-1}\left\|(1-p^{\alpha})\widehat{U}(p,\cdot) + (p-1)k!\alpha v_0\right\|_{L^2(\Om)}\\
=&\left\|\frac{1-p^{\alpha}}{1-p}(\widehat{U}(p,\cdot)-\widehat{U}(1,\cdot)) + \left(-\alpha+\frac{1-p^{\alpha}}{1-p}\right) \widehat{U}(1,\cdot)\right\|_{L^2(\Om)}\\
\leq& \left\|\frac{1-p^{\alpha}}{1-p}(\widehat{U}(p,\cdot)-\widehat{U}(1,\cdot))\right\|_{L^2(\Om)}+\left\|\left(-\alpha+\frac{1-p^{\alpha}}{1-p}\right) \widehat{U}(1,\cdot)\right\|_{L^2(\Om)}\\
\leq &(\|\alpha\|_{L^\infty(\Om)}+C|p-1|)\left\|\widehat{U}(p,\cdot)-\widehat{U}(1,\cdot)\right\|_{L^2(\Om)}+C|p-1|\|\widehat{U}(1,\cdot)\|_{L^2(\Om)}.
\end{align*}
By combining this estimate with \eqref{tr},  we obtain \eqref{eq:V:inside}, which directly implies \eqref{tata}. Thus we have $G'(1)=-(k+1)! \p_\nu v_{0} - k! \p_\nu v_1$ in the sense of Definition \ref{def:gen:derivative} with $X=H^{-\frac{1}{2}}(\partial\Omega)$.
\end{proof}

\begin{remark}\label{remark:v0:explicit}
Note that $v_0(x)=e^{x\cdot\omega_0}$ is independent of $p$ and has an explicit form satisfying \eqref{eq:bvp:v0}.
\end{remark}

The next lemma gives an important orthogonality identity.
\begin{lemma}\label{lemma:uniqueness:main:identity}
Let $\alpha^i$ with $i\in\{1,2\}$, and $g$ and $I$ satisfy Assumptions \ref{ass:alpha:general} and \ref{ass:g:exptype}, respectively.
Let $U=U^i$ with $i\in\{1,2\}$ be the solutions to problem \eqref{eq:2D3D:ibvp} with $\alpha=\alpha^i$ satisfying
\begin{equation}\label{l3a}
\p_\nu U^1(t,x)=\p_\nu U^2(t,x),\quad (t,x)\in I\times\p\Om.
\end{equation}
Then we have
\begin{equation}\label{eqn:orthogonal}
\int_\Om (\alpha^1(x)-\alpha^2(x)) e^{x\cdot(\omega+\omega_0)}\,{\rm d} x = 0,\quad\forall \omega\in\mathbb{S}^{d-1}.
\end{equation}
\end{lemma}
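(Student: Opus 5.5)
The argument has three steps. First, transfer the equality of the time-domain fluxes on $I\times\p\Om$ to the Laplace domain for all $p>0$. Second, differentiate at $p=1$ using Theorem \ref{theorem:flux:freqisone:asympt:general} to get $\p_\nu v_1^1=\p_\nu v_1^2$. Third, test the difference $v_1^1-v_1^2$ against the harmonic-type functions $e^{x\cdot\omega}$ through Green's formula.

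\emph{Step 1 (analytic continuation in time and Laplace transform).} By Theorem \ref{theorem:analyticity} and \eqref{normal}, the map $t\mapsto \p_\nu U^1(t,\cdot)-\p_\nu U^2(t,\cdot)$ is real analytic on $\mathbb{R}_+$ with values in $H^{-\frac12}(\p\Om)$. By \eqref{l3a} it vanishes on $I$, and $I$ has a positive accumulation point. Unique continuation of vector-valued analytic functions, applied after pairing with an arbitrary $\phi\in H^{\frac12}(\p\Om)$, therefore gives $\p_\nu U^1=\p_\nu U^2$ on $(0,\infty)\times\p\Om$. Since $t\mapsto U^i(t,\cdot)e^{-pt}\in L^1(0,\infty;H_\Delta(\Om))$ and $\p_\nu:H_\Delta(\Om)\to H^{-\frac12}(\p\Om)$ is bounded, the normal derivative commutes with the Bochner integral. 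Hence $\p_\nu\widehat U^1(p,\cdot)=\p_\nu\widehat U^2(p,\cdot)$ for every $p>0$, that is, $G^1\equiv G^2$ on $(0,\infty)$.

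\emph{Step 2 (linearization).} Since $G^1=G^2$ on $(0,\infty)$, the derivatives at $p=1$ coincide: $(G^1)'(1)=(G^2)'(1)$. By Theorem \ref{theorem:flux:freqisone:asympt:general} and Remark \ref{remark:v0:explicit}, $v_0=e^{x\cdot\omega_0}$ is the same for both orders, so \eqref{eq:flux:lowfreq:asympt:pnear1} yields $\p_\nu v_1^1=\p_\nu v_1^2$ in $H^{-\frac12}(\p\Om)$. Set $w=v_1^1-v_1^2\in H_\Delta(\Om)$. Then
\begin{equation*}
-\Delta w+w=(\alpha^1-\alpha^2)e^{x\cdot\omega_0}\ \mbox{in }\Om,\qquad w=0 \mbox{ and } \p_\nu w=0 \mbox{ on }\p\Om.
\end{equation*}

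\emph{Step 3 (Green's identity).} For $\omega\in\mathbb{S}^{d-1}$, the function $\psi(x)=e^{x\cdot\omega}$ is smooth and satisfies $-\Delta\psi+\psi=(1-|\omega|^2)\psi=0$. We need the generalized Green formula
\begin{equation*}
\int_\Om(\Delta w\,\psi-w\,\Delta\psi)\d x=\langle\p_\nu w,\psi\rangle_{H^{-\frac12},H^{\frac12}}-\int_{\p\Om}w\,\p_\nu\psi\d\sigma ,
\end{equation*}
valid for $w\in H_\Delta(\Om)$ and $\psi\in C^\infty(\overline\Om)$ on a Lipschitz domain. This is how the normal trace is defined in \cite[Lemma 2.2]{Kav}, so it holds by construction. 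Both boundary terms vanish, which gives
\begin{equation*}
0=\int_\Om(-\Delta w+w)\psi\d x=\int_\Om(\alpha^1-\alpha^2)e^{x\cdot(\omega+\omega_0)}\d x ,
\end{equation*}
and this is \eqref{eqn:orthogonal}.

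The main obstacle I expect is Step 1. Analyticity holds only on the open half-line $\mathbb{R}_+$, so the accumulation point must be positive, as Assumption \ref{ass:g:exptype} guarantees. One must also justify interchanging the trace with the Laplace integral, which requires the $L^1(0,\infty;H_\Delta)$ integrability in Theorem \ref{theorem:analyticity} together with \eqref{normal}.
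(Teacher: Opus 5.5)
Your proposal is correct and follows the paper's proof essentially step for step. The steps are analytic continuation of the flux in time, Laplace transform, equality of the derivatives at $p=1$ via Theorem \ref{theorem:flux:freqisone:asympt:general} to get $\p_\nu v_1^1=\p_\nu v_1^2$, and testing the equation for $v_1^1-v_1^2$ against $e^{x\cdot\omega}$; the only cosmetic difference is that you use Green's second identity with both boundary terms vanishing, whereas the paper uses the first Green identity together with the weak formulation of the problem for $z_\omega=e^{x\cdot\omega}$ and $v\in H^1_0(\Om)$.
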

\begin{proof}
Theorem \ref{theorem:analyticity} and the estimate \eqref{normal} imply that, for $j=1,2$, the map $t\mapsto \p_\nu U^j(t,\cdot)|_{\p\Om}$ is analytic from $\mathbb{R}_+$ to $H^{-\frac 1 2}(\p\Om)$. Then, condition \eqref{l3a} and the isolated zero principle for analytic functions imply
\begin{equation}\label{l3b}
\p_\nu U^1(t,x)=\p_\nu U^2(t,x),\quad (t,x)\in \mathbb R_+\times\p\Om.
\end{equation}
By Theorem \ref{theorem:analyticity} and \eqref{normal} again, we deduce that, for all $p>0$, $t\mapsto e^{-pt}\p_\nu U^j(t,\cdot)|_{\p\Om}\in L^1(\mathbb R_+;H^{-\frac 1 2}(\p\Om))$. Therefore, the Laplace transform with respect to $t\in\mathbb R_+$ on \eqref{l3b} gives
\begin{equation}\label{l3c}G^1(p)=\int_0^{\infty}e^{-pt}\p_\nu U^1(t,\cdot)|_{\p\Om}\d t=\int_0^{\infty}e^{-pt}\p_\nu U^2(t,\cdot)|_{\p\Om}\d t=G^2(p)
,\quad p>0,\end{equation}
with  $G^j(p):=\p_\nu\widehat{U}^j(p,\cdot)|_{\p\Om}$ with $j=1,2$ and $p>0$.
For $j=1,2$, let $v_0$ and $v_1^j$ be defined by problem \eqref{eq:bvp:v0} with $\alpha=\alpha^j$.
By Theorem \ref{theorem:flux:freqisone:asympt:general}, $G^j(p)$ is differentiable at $p=1$ with
$$(G^j)'(1)=-(k+1)! \p_\nu v_{0} - k! \p_\nu v_1^j,\quad j=1,2.$$
By combining this with \eqref{l3c}, we obtain $(G^1)'(1)=(G^2)'(1)$, or equivalently,
\begin{equation}\label{eq:fullmeas:identity}
\p_\nu v_1^1(x)=\p_\nu v_1^2(x),\quad \forall x\in\p\Om.
\end{equation}
Fix $\omega\in\mathbb{S}^{d-1}$ and let the auxiliary function $z_\omega\in H_\Delta(\Om)$ be the weak solution to
\begin{equation}\label{eq:bvp:w:aux}
\left\{
\begin{aligned}
-\Delta z_\omega + z_\omega&=0,\quad\mbox{in }\Om,\\
z_\omega&=e^{x\cdot\omega},\quad\mbox{on }\partial\Omega.
\end{aligned}
\right.
\end{equation}
By the uniqueness of solutions of problem \eqref{eq:bvp:w:aux}, we have  $z_\omega(x)=e^{x\cdot\omega}$ for all $x\in\Om$.
Meanwhile, let $v=v_1^1-v_1^2$. Then it satisfies
$$
\left\{\begin{aligned}
-\Delta v + v &= (\alpha^1-\alpha^2) v_0,\quad \mbox{in }\Om,\\
v&= 0,\quad \mbox{on } \partial\Omega.
\end{aligned}\right.
$$
By multiplying $z_\omega$ on both sides of the equation,
integrating over the domain $\Omega$ and integrating by parts, we obtain the identity
$$\int_\Om \nabla v\cdot\nabla z_\omega \,{\rm d}x + \int_\Om vz_\omega \,{\rm d}x- \left\langle\p_\nu v, z_\omega\right\rangle_{H^{-\frac 1 2}(\p\Om),H^{\frac 1 2}(\p\Om)} = \int_\Om (\alpha^1-\alpha^2)v_0 z_\omega \,{\rm d}x.$$
By the weak formulation of problem \eqref{eq:bvp:w:aux} and  the identity \eqref{eq:fullmeas:identity}, we arrive at
$$\int_\Om (\alpha^1-\alpha^2) v_0 z_\omega\,{\rm d} x = - \left\langle\p_\nu v,z_\omega\right\rangle_{H^{-\frac 1 2}(\p\Om),H^{\frac 1 2}(\p\Om)} =0.$$
Using the explicit relations $v_0=e^{x\cdot\omega_0}$ and $z_\omega=e^{x\cdot\omega}$, we obtain the desired assertion \eqref{eqn:orthogonal}.
\end{proof}

The next lemma gives a holomorphic extension of the identity \eqref{eqn:orthogonal}. Let $\bm\varphi_{d-2}=(\varphi_1,\dots,\varphi_{d-2})$.
\begin{lemma}\label{lemma:exponential:in:integralkernel}
    Let $\{\mathrm{e}_j\}_{j=1}^d$ be an orthonormal basis of $\mathbb{R}^d$, $\beta\in L^\infty(\Om;\mathbb{C})$,
    and for all $(\theta,\bm{\varphi}_{d-2})\in\mathbb{C}^{d-1}$, let
\begin{equation}\label{eqn:omega} \omega(\theta,\bm{\varphi}_{d-2}):=(\cos\theta)\mathrm{e}_1+\sum_{i=2}^{d-1}\left(\sin\theta\left(\prod_{j=1}^{i-2}\sin\varphi_j\right)\cos\varphi_{i-1}\right)\mathrm{e}_{i}+\left(\sin\theta\left(\prod_{j=1}^{d-2}\sin\varphi_j\right)\right)\mathrm{e}_{d}.
\end{equation}
When $d=2$, let $\omega(\theta)=(\cos\theta) {\mathrm{e}}_1 + (\sin\theta){\mathrm{e}}_2$.  For any $\bm\varphi_{d-2}\in\mathbb{R}^{d-2}$, the map $W:\mathbb{C}^{d-1}\to\mathbb{C}$ defined by
\begin{equation}\nonumber
    W(\theta,\bm\varphi_{d-2}):= \int_{\Om}\beta(x)e^{ x\cdot\omega(\theta,\bm\varphi_{d-2})}\,{\rm d}x,\quad\forall(\theta,\bm\varphi_{d-2})\in\mathbb{C}^{d-1},
    \end{equation}
    is holomorphic in each entry of $(\theta,\bm\varphi_{d-2})$ in $\mathbb{C}$ with the other entries fixed.
\end{lemma}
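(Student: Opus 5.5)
The plan is the standard argument for holomorphy of parameter-dependent integrals: differentiation under the integral sign, or equivalently Morera's theorem combined with Fubini. Fix an index, say the variable $\theta$, and fix the remaining entries $\bm\varphi_{d-2}\in\mathbb{C}^{d-2}$. The argument is identical for each $\varphi_i$. Consider the integrand
\[
F(\theta,x):=\beta(x)e^{x\cdot\omega(\theta,\bm\varphi_{d-2})}.
\]
Here $x\cdot\omega$ denotes the bilinear (not sesquilinear) pairing $\sum_j x_j\,\omega_j$ with $x\in\Om\subset\mathbb{R}^d$. Each component of $\omega(\theta,\bm\varphi_{d-2})$ in \eqref{eqn:omega} is a finite product of $\cos$ and $\sin$ of the entries. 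So for fixed $x$ the map $\theta\mapsto x\cdot\omega(\theta,\bm\varphi_{d-2})$ is entire, and hence $\theta\mapsto F(\theta,x)$ is entire. Measurability of $x\mapsto F(\theta,x)$ is clear, since $\beta\in L^\infty$ and the exponential factor is continuous in $x$.

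The key step is a local uniform domination. Take any compact set $K\subset\mathbb{C}$, for instance a closed disc. On $K$, the functions $|\cos\theta|$ and $|\sin\theta|$ are bounded. The remaining factors $\cos\varphi_j$ and $\sin\varphi_j$ are fixed constants. Hence there is $M_K<\infty$ such that $|\omega(\theta,\bm\varphi_{d-2})|\le M_K$ for all $\theta\in K$. Since $\Om$ is bounded, say $\Om\subset B_R(0)$, we get
\[
|F(\theta,x)|\le \|\beta\|_{L^\infty(\Om)}e^{|x|\,|\omega|}\le \|\beta\|_{L^\infty(\Om)}e^{RM_K},\qquad \theta\in K,\ x\in\Om .
\]
This bound is integrable over $\Om$ because $|\Om|<\infty$. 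Two consequences follow. First, $W$ is well defined. Second, by dominated convergence, $\theta\mapsto W(\theta,\bm\varphi_{d-2})$ is continuous on $K$, and therefore continuous on $\mathbb{C}$.

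To conclude holomorphy, apply Morera's theorem. Let $\gamma$ be any closed triangle contour in $\mathbb{C}$, and let $K$ be the compact set it bounds. The bound above makes $F$ integrable on $\gamma\times\Om$, so Fubini gives
\[
\oint_\gamma W\,\mathrm{d}\theta=\int_\Om\oint_\gamma F(\theta,x)\,\mathrm{d}\theta\,\mathrm{d}x=0,
\]
because the inner integral vanishes by Cauchy's theorem for the entire function $F(\cdot,x)$. Together with the continuity of $W$, this gives holomorphy in $\theta$. Alternatively, one can bound $\partial_\theta F$ on $K$ by $\|\beta\|_\infty R M'_K e^{RM_K}$ and differentiate under the integral sign. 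Repeating the argument for each $\varphi_i$ proves the claim. The case $d=2$ is the special case with only $\theta$.

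I expect no real obstacle here. The only point requiring care is noting that $x$ stays real while $\omega$ becomes complex, so that $|e^{x\cdot\omega}|=e^{\Re(x\cdot\omega)}\le e^{|x||\omega|}$. The hypothesis that $\Om$ is bounded is exactly what makes the domination uniform on compact subsets of the parameter.
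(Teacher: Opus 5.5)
Your proof is correct, but it takes a different route from the paper.

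The paper verifies complex differentiability directly from the definition. Fixing $\theta_0$ and $f=\omega(\cdot,\bm\varphi_{d-2})$, it writes
\[
f(\theta_0+\varepsilon)-f(\theta_0)-\varepsilon f'(\theta_0)=\varepsilon^2\widetilde f(\varepsilon),\qquad e^z-1-z=z^2f_e(z),
\]
with entire remainders $\widetilde f$ and $f_e$. It then bounds the resulting second-order remainder uniformly on the compact set $\overline{\Om}\times\{|\varepsilon|\le 1\}$. This identifies the derivative explicitly as $\int_\Om\beta(x)\,(x\cdot f'(\theta_0))\,e^{x\cdot f(\theta_0)}\,{\rm d}x$.

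You instead use the local uniform bound $|F(\theta,x)|\le\|\beta\|_{L^\infty(\Om)}e^{RM_K}$ in two ways. First, it gives continuity of $W$ by dominated convergence. Second, it justifies Fubini, so that Cauchy's theorem for the entire function $F(\cdot,x)$ on triangles lets you invoke Morera.

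Your route is shorter and needs neither the derivative nor any remainder bookkeeping. It also applies verbatim to any kernel that is entire in the parameter and locally bounded uniformly in $x$. The paper's route avoids Morera and Fubini and yields the derivative formula as a by-product, though that formula is not used later.

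Both arguments use the boundedness of $\Om$ in the same essential way: compactness of $\overline{\Om}$ in the paper, the bound $e^{RM_K}$ in yours. Both treat the $\varphi_j$ variables by the same argument as for $\theta$.
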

\begin{proof}
Fix any $\bm\varphi_{d-2}\in\mathbb{C}^{d-2}$, and let  $f(\theta)=\omega(\theta,\bm\varphi_{d-2})$. Then its derivative $f'$ is given by
$$f'(\theta)=(-\sin\theta)\mathrm{e}_1+\sum_{i=2}^{d-1}\left(\cos\theta\left(\prod_{j=1}^{i-2}\sin\varphi_j\right)\cos\varphi_{i-1}\right)\mathrm{e}_{i}+\left(\cos\theta\left(\prod_{j=1}^{d-2}\sin\varphi_j\right)\right)\mathrm{e}_{d}.$$
We prove that the map $\theta\mapsto W(\theta,\bm\varphi_{d-2})$ is holomorphic in $\mathbb{C}$. Let
$\widetilde{W}(\theta):= \int_{\Om}\beta(x)(x\cdot  f'(\theta))e^{x\cdot f(\theta)}\,{\rm d}x.$
By H\"{o}lder's inequality, for all $\theta,\varepsilon\in\mathbb{C}$, we have
\begin{align}
    \nonumber&\hspace{4mm}
        \left|W(\theta+\varepsilon,\bm\varphi_{d-2})-W(\theta,\bm\varphi_{d-2})-\varepsilon \widetilde {W}(\theta)\right|\\
        \nonumber&=\left|\int_\Om \beta(x)\left[e^{x\cdot f(\theta+\varepsilon)}-e^{x\cdot f(\theta)}-\varepsilon(x\cdot f'(\theta))e^{x\cdot f(\theta)}\right]\,{\rm d}x\right|\\
        &\le \|\beta(x)\|_{L^\infty(\Om)} \left\|e^{x\cdot f(\theta+\varepsilon)}-e^{x\cdot f(\theta)}-\varepsilon(x\cdot f'(\theta))e^{x\cdot f(\theta)}\right\|_{L^1(\Om)}.\label{eq:Holder:pf:holomorphic}
    \end{align}
    Fix any $\theta_0\in\mathbb{C}$. We claim that $L^1(\Omega)$ term with $\theta=\theta_0$ in \eqref{eq:Holder:pf:holomorphic} is of class $o(|\varepsilon|)$  as $\varepsilon\to 0$. Since each component of $f$ is holomorphic in $\mathbb{C}$, there exists $\widetilde{f}:\mathbb{C}\to\mathbb{C}^{d}$, whose components are holomorphic in $\mathbb{C}$, such that
$$f(\theta_0+\varepsilon)-f(\theta_0)-\varepsilon f'(\theta_0)=\varepsilon^2 \widetilde{f}(\varepsilon),\quad\forall \varepsilon\in\mathbb{C}.$$
    Similarly, since $e^z$ is holomorphic in $\mathbb{C}$, there exists  $f_e:\mathbb{C}\to\mathbb{C}$  holomorphic in $\mathbb{C}$ satisfying $e^z-1-z=z^2f_e(z)$ for all $z\in\mathbb{C}$.
    Hence, for each $x\in\Om$, with $z\equiv x\cdot(\varepsilon f'(\theta_0)+\varepsilon^2\widetilde{f}(\varepsilon))$, we have
    \begin{align}
        \nonumber e^{x\cdot f(\theta_0+\varepsilon)}-e^{x\cdot f(\theta_0)}&=\left[e^{x\cdot( f(\theta_0+\varepsilon)- f(\theta_0))}-1\right]e^{x\cdot f(\theta_0)} =\left[e^{x\cdot(\varepsilon f'(\theta_0)+\varepsilon^2 \widetilde{f}(\varepsilon))}-1\right]e^{x\cdot f(\theta_0)}\\
        &=\left[z + z^2 f_e(z) \right]e^{x\cdot f(\theta_0)}
        =\varepsilon(x\cdot f'(\theta_0))e^{x\cdot f(\theta_0)} + \varepsilon^2 R(x,\varepsilon),\label{eq:remainder:pf:holomorphic}
    \end{align}
with $R(x,\varepsilon)=[x\cdot \widetilde{f}(\varepsilon)+z^2  f_e(z)]e^{x\cdot f(\theta_0)}$ for all $(x,\varepsilon)\in\overline{\Om}\times\mathbb{C}$.
Since $\widetilde f$ and $f_e$ are entire functions on $\mathbb{C}$, $R$ is continuous on the compact set $\overline{\Om}\times \{z\in\mathbb{C}\,:\,|z|\le1\}$, and we have $\sup\{|R(x,\varepsilon)|\,:\,x\in\overline{\Om},\,\varepsilon\in\mathbb{C},\,|\varepsilon|\le1\}<\infty$. This directly implies
\begin{equation}\label{eq:remainder:pf:holomorphic:2}
        \|\varepsilon^2 R(x,\varepsilon)\|_{L^1(\Om)}\le |\varepsilon|^2 |\Om| \sup\{|R(x,\varepsilon)|\,:\,x\in\overline{\Om},\,\varepsilon\in\mathbb{C},\,|\varepsilon|\le1\} = o(|\varepsilon|)\quad\mbox{as }\varepsilon\to0.
    \end{equation}
Combining \eqref{eq:Holder:pf:holomorphic}, \eqref{eq:remainder:pf:holomorphic} and \eqref{eq:remainder:pf:holomorphic:2} gives that $\widetilde {W}(\theta_0)$ is the complex derivative of $W(\theta,\bm\varphi_{d-2})$ at $\theta=\theta_0$.
Since the  argument holds for any $\theta_0\in\mathbb{C}$, the map $\theta\mapsto W(\theta,\bm\varphi_{d-2})$ is holomorphic in $\mathbb{C}$.
The proof of complex differentiability with respect to the components of $(\bm\varphi_{d-2})$ is similar and thus omitted.
\end{proof}
\begin{remark}\label{remark:extended:identity}
Under the assumptions of Lemma \ref{lemma:uniqueness:main:identity}, by Lemma \ref{lemma:exponential:in:integralkernel} with $\beta(x) = (\alpha^1(x)-\alpha^2(x))e^{x\cdot\omega_0}$, for any fixed $\bm\varphi_{d-2}\in\mathbb{C}^{d-2}$, the map $W:\mathbb{C}\to\mathbb{C}$ defined by
    $$W(\theta):= \int_{\Om}(\alpha^1(x)-\alpha^2(x))e^{x\cdot(\omega(\theta,\bm\varphi_{d-2})+\omega_0)}\,{\rm d}x,\quad\forall\theta\in\mathbb{C}$$
    is holomorphic.
    Note that $\omega(\theta,\bm \varphi_{d-2})\in\mathbb{S}^{d-1}$ for all $(\theta,\bm\varphi_{d-2})\in\mathbb{R}^{d-1}$.
    Hence, for any  $\bm \varphi_{d-2}\in\mathbb{R}^{d-2}$, Lemma \ref{lemma:uniqueness:main:identity} implies that the holomorphic function $W$ satisfies $W(\theta)=0$ all $\theta\in\mathbb{R}$ so that $W(\theta)=0$ for all $\theta\in\mathbb{C}$ by the unique continuation property. That is, we have
\begin{equation}\label{eq:extended:identity}
        \int_{\Om}(\alpha^1(x)-\alpha^2(x))e^{x\cdot(\omega(\theta,\bm \varphi_{d-2})+\omega_0)}\,{\rm d}x=0,\quad\forall\theta\in\mathbb{C}.
    \end{equation}
\end{remark}

In Sections \ref{sec:inverse} and \ref{section:polygons},
we carefully choose an orthonormal basis $\{\mathrm{e}_j\}_{j=1}^d$ of $\mathbb{R}^d$ that determines $\omega(\theta,\bm \varphi_{d-2})$ in \eqref{eqn:omega}  and $\widetilde{\theta}\in\mathbb{R}$ so that the identity \eqref{eq:extended:identity} for $\theta$ on the half-line $\{\widetilde{\theta}-{\rm i}R\,:\,R>0\}$ implies $\alpha^1=\alpha^2$.
This analysis depends on the type of the unknown perturbation in $\alpha$.

We also use the following technical lemma.
\begin{lemma}\label{lemma:sphere:and:hyperplane}
    Let $d\ge2$, $n\in\mathbb{N}$, $a_i\in\mathbb{R}^d\backslash\{0\}$ and $b_i\in\mathbb{R}^d$ for each $i\in [n]$. Then the set
    $$\{\omega\in\mathbb{S}^{d-1}\,:\,\omega\cdot a_i \neq b_i,\ \forall i\in[n]\}$$
    is open and dense with respect to the relative topology on $\mathbb{S}^{d-1}\subset\mathbb{R}^d$.
\end{lemma}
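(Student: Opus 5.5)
The plan is to show that each individual "bad" set
$$E_i:=\{\omega\in\mathbb{S}^{d-1}\,:\,\omega\cdot a_i=b_i\},\qquad i\in[n],$$
is relatively closed in $\mathbb{S}^{d-1}$ and has empty relative interior. Then the finite union $E:=\bigcup_{i=1}^n E_i$ is closed and nowhere dense, and the set in the statement, which is $\mathbb{S}^{d-1}\setminus E$, is open and dense. Here $b_i$ is read as a real number, since $\omega\cdot a_i$ is scalar.

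\emph{Closedness.} The map $h_i:\mathbb{S}^{d-1}\to\mathbb{R}$, $h_i(\omega)=\omega\cdot a_i$, is continuous. Hence $E_i=h_i^{-1}(\{b_i\})$ is relatively closed, and so is the finite union $E$. Consequently $\mathbb{S}^{d-1}\setminus E$ is relatively open.

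\emph{Empty interior of each $E_i$.} Suppose, for contradiction, that $h_i\equiv b_i$ on a nonempty relatively open set $U\subset\mathbb{S}^{d-1}$.
\begin{itemize}
\item Fix $\omega\in U$ and any tangent vector $\tau\perp\omega$. The curve $s\mapsto \cos s\,\omega+\sin s\,\tau/|\tau|$ stays in $U$ for small $|s|$.
\item Differentiating $h_i$ along this curve at $s=0$ gives $\tau\cdot a_i=0$. Hence $a_i\perp\omega^\perp$, so $a_i$ is parallel to $\omega$, i.e.\ $\omega=\pm a_i/|a_i|$ (using $a_i\neq0$).
\item This must hold for every $\omega\in U$. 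But $U$ is a nonempty open subset of a sphere of dimension $d-1\ge1$, so it contains infinitely many points, not just the two points $\pm a_i/|a_i|$. This is a contradiction.
\end{itemize}

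\emph{Density of the complement.} For density, the one step needing care is the passage from "each $E_i$ has empty interior" to "the union has empty interior". I would do this by the elementary finite version of Baire's argument rather than invoking Baire's theorem.
\begin{itemize}
\item Let $V$ be a nonempty relatively open set.
\item Since $E_1$ is closed with empty interior, $V_1:=V\setminus E_1$ is nonempty and open.
\item Inductively, $V_k:=V_{k-1}\setminus E_k$ is nonempty and open for $k=2,\dots,n$.
\item Then $V_n\subset V\setminus E$ is nonempty.
\end{itemize}
Thus every nonempty relatively open set meets $\mathbb{S}^{d-1}\setminus E$, which is therefore dense. Combined with the openness shown above, this completes the argument.
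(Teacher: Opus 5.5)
Your proof is correct; it agrees with the paper on openness but proves density by a different route. The openness step is the same in both: the bad set is a preimage of a closed set under a continuous map.

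\emph{Paper's route to density.} The paper argues measure-theoretically. Since $a_i\neq0$, each $\mathbb{S}^{d-1}\cap H_i$ is empty, a point, or a $(d-2)$-dimensional sphere, so it has surface measure zero. A finite union of null sets is null. The paper then concludes density, leaving implicit the step that a null set cannot contain a nonempty relatively open subset of $\mathbb{S}^{d-1}$.

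\emph{Your route.} You show directly that each $E_i$ has empty relative interior by differentiating $\omega\cdot a_i$ along great circles. This forces every point of a putative open set $U\subset E_i$ to equal $\pm a_i/|a_i|$, which is impossible because $\mathbb{S}^{d-1}$ has no isolated points when $d\ge2$. You then pass to the finite union with the elementary Baire-type induction.

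\emph{Comparison.} The paper's argument is shorter and yields the slightly stronger fact that the exceptional set is null. Yours is self-contained and measure-free: it needs neither the classification of hyperplane sections of the sphere nor the positivity of surface measure on open sets. Its second half (finite unions of closed nowhere dense sets are nowhere dense) also applies verbatim to any finite family of closed sets with empty interior.

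Finally, reading $b_i$ as a real number is the right correction of the typo $b_i\in\mathbb{R}^d$ in the statement.
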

\begin{proof}
For each $i\in[n]$, let $H_i$ be the hyperplane defined by
$H_i:=\{\omega\in\mathbb{R}^{d}\,:\,\omega\cdot a_i = b_i\}.$ Each $H_i$ is closed in $\mathbb{R}^d$, so its complement is open, and thus, $\mathbb{S}^{d-1}\backslash \bigcup_{i=1}^n H_i$ is open with respect to the relative topology on $\mathbb{S}^{d-1}\subset\mathbb{R}^d$.
    For each $i\in[n]$, by the hypothesis $a_i\ne0$, the set $\mathbb{S}^{d-1}\cap H_i$ is of one of the three types: empty, singleton or a sphere of Hausdorff dimension $d-2$, and thus has surface measure zero on $\mathbb{S}^{d-1}$.
    Thus, the set $\mathbb{S}^{d-1}\backslash \bigcup_{i=1}^n H_i$ is dense in $\mathbb{S}^{d-1}$.
\end{proof}

\section{Spherical inclusions}\label{sec:inverse}

In this section we prove the uniqueness for spherical inclusions in Theorem \ref{theorem:balls}.

\subsection{Proof of Theorem \ref{theorem:balls}}

The proof of Theorem \ref{theorem:balls} relies on two technical lemmas, whose proofs are given in Section \ref{lemmas:balls}. The notation $J_\nu$ denotes the Bessel function of order $\nu$; see Definition \ref{definition:Bessel} below for details.
\begin{lemma}\label{lemma:Bessel}
    For all $d\ge 2$ and $y\in\mathbb{R}^{d}\backslash\{0\}$, we have
    $$\int_{B_{r}(0)} e^{ x\cdot y}\,{\rm d}x =
        (2\pi r|y|^{-1})^{\frac{d}{2}} e^{-\frac{d}{4}\pi{\rm i}} J_{\frac{d}{2}}({\rm i}r|y|).$$
\end{lemma}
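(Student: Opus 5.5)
By rotation invariance of the ball and of Lebesgue measure, we may assume $y=|y|\mathrm{e}_1$ with $s:=|y|>0$. Then we compute $\int_{B_r(0)}e^{s x_1}\,{\rm d}x$ by slicing along $x_1$. For $x_1=t\in(-r,r)$, the slice is a $(d-1)$-ball of radius $\sqrt{r^2-t^2}$, with volume $\omega_{d-1}(r^2-t^2)^{\frac{d-1}{2}}$, where $\omega_{d-1}=\pi^{\frac{d-1}{2}}/\Gamma(\frac{d+1}{2})$. This reduces the integral to
$$\frac{\pi^{\frac{d-1}{2}}}{\Gamma(\frac{d+1}{2})}\int_{-r}^{r}(r^2-t^2)^{\frac{d-1}{2}}e^{st}\,{\rm d}t
= \frac{\pi^{\frac{d-1}{2}}r^{d}}{\Gamma(\frac{d+1}{2})}\int_{-1}^{1}(1-u^2)^{\frac{d-1}{2}}e^{rsu}\,{\rm d}u .$$

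The next step is to recognize this as a Bessel function via Poisson's integral representation (which I expect to be stated in Definition \ref{definition:Bessel} or derivable from the series definition),
$$J_\nu(z)=\frac{(z/2)^\nu}{\sqrt{\pi}\,\Gamma(\nu+\frac12)}\int_{-1}^{1}(1-u^2)^{\nu-\frac12}e^{{\rm i}zu}\,{\rm d}u,\qquad \Re\nu>-\tfrac12 ,$$
applied with $\nu=\frac d2$ (so $\nu-\frac12=\frac{d-1}{2}$, matching the exponent) and $z={\rm i}rs$. If only the power series definition is available, I would verify this representation by expanding $e^{{\rm i}zu}$ in powers, using that odd moments vanish and that the even moments are Beta integrals $\int_{-1}^1(1-u^2)^{\nu-\frac12}u^{2m}\,{\rm d}u=B(m+\frac12,\nu+\frac12)$, and then comparing coefficients. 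Since ${\rm i}zu=-rsu$ while we need $e^{+rsu}$, I would use the symmetry $u\mapsto -u$ of the weight, which shows the integral is even in $z$. This gives
$$\int_{-1}^{1}(1-u^2)^{\frac{d-1}{2}}e^{rsu}\,{\rm d}u=\sqrt\pi\,\Gamma(\tfrac{d+1}{2})\Big(\frac{{\rm i}rs}{2}\Big)^{-\frac d2}J_{\frac d2}({\rm i}rs).$$

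Substituting, the Gamma factors cancel and the integral equals $\pi^{\frac d2}r^d(2/({\rm i}rs))^{\frac d2}J_{\frac d2}({\rm i}rs)=(2\pi r s^{-1})^{\frac d2}\,{\rm i}^{-\frac d2}J_{\frac d2}({\rm i}rs)$. Writing ${\rm i}^{-\frac d2}=e^{-\frac d4\pi{\rm i}}$ then yields the claimed formula.

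The main obstacle is consistency of branches. For odd $d$, both $({\rm i}rs/2)^{\frac d2}$ and $J_{\frac d2}({\rm i}rs)$ involve a choice of fractional power, and these choices must agree. I would handle this by fixing the principal branch and noting that $J_\nu(z)(z/2)^{-\nu}$ is an entire even function. Hence the combination $(z/2)^{-\nu}J_\nu(z)$ is branch-free, and the only genuine choice is ${\rm i}^{\frac d2}=e^{\frac d4\pi{\rm i}}$ in the principal branch. As a sanity check, I would verify that the final expression is real and positive, e.g. via $d=2$, where ${\rm i}^{-1}J_1({\rm i}x)=I_1(x)$.
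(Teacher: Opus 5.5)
Your proof is correct, and it reaches the formula through a different decomposition than the paper. The paper uses polar coordinates for $d=2$ and hyperspherical coordinates for $d\ge3$, treated as two separate cases, with polar axis $y/|y|$. It expands $e^{|y|\tau\cos\theta}$ in a Taylor series, integrates each term in $\tau$ and in $\theta$ via Lemma \ref{lemma:Beta:Gamma:relation}, and simplifies $\Gamma(m+\frac12)$ by Legendre duplication (Lemma \ref{lemma:Gamma:properties}). It then cancels $|\mathbb{S}^{d-2}|$ against $\Gamma(\frac{d-1}{2})$ before matching the series in Definition \ref{definition:Bessel}. You instead slice perpendicular to $y$, which absorbs all transverse directions into the volume $\pi^{\frac{d-1}{2}}/\Gamma(\frac{d+1}{2})$ of the unit $(d-1)$-ball. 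What remains is a single one-dimensional integral, precisely Poisson's representation of $J_{d/2}$ at $z={\rm i}rs$. This treats every $d\ge2$ uniformly, and the $\Gamma(\frac{d+1}{2})$ factors cancel at once. Definition \ref{definition:Bessel} in the paper gives only the power series, so you need either a citation for Poisson's formula or your fallback verification. That verification (odd moments vanish, even moments equal $B(m+\frac12,\nu+\frac12)$, duplication for $\Gamma(m+\frac12)$) is the same Beta/duplication computation the paper performs, carried out in one variable instead of two. Your observation that $(z/2)^{-\nu}J_\nu(z)$ is entire and even makes explicit a point the paper leaves implicit: for odd $d$ the only branch choice is ${\rm i}^{d/2}=e^{\frac d4\pi{\rm i}}$, consistent with the principal branch fixed in Definition \ref{definition:Bessel}.
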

\begin{lemma}
\label{lemma:main:linindep:general}
Fix any $d\ge2$ and $\omega_0\in\mathbb{S}^{d-1}$.
Let $\{(r_i,x_i)\}_{i=1}^N\subset\mathbb{R}_{+}\times\mathbb{R}^d$ be a finite set of distinct pairs.
If the constants $\{C_i\}_{i=1}^N\subset \mathbb{R}$ satisfy
$$\sum_{i=1}^N C_i e^{x_i\cdot\omega}\int_{B_{r_i}(0)} e^{ x\cdot (\omega+\omega_0)}\,{\rm d}x=0,\quad\forall \omega\in\mathbb{S}^{d-1},$$
then $C_i=0$ for all $i\in [N]$.
\end{lemma}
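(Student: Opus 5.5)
The plan is to combine the explicit Bessel formula of Lemma \ref{lemma:Bessel} with the complex continuation of Remark \ref{remark:extended:identity}, and then separate the terms by their growth along a complex half-line.

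First substitute $y=\omega+\omega_0$ into Lemma \ref{lemma:Bessel}. The hypothesis becomes
\begin{equation*}
F(\omega):=\sum_{i=1}^N C_i e^{x_i\cdot\omega}\,(2\pi r_i)^{\frac d2}|\omega+\omega_0|^{-\frac d2}e^{-\frac d4\pi{\rm i}}J_{\frac d2}({\rm i}r_i|\omega+\omega_0|)=0 .
\end{equation*}
This holds for $\omega\neq-\omega_0$, and $|\omega+\omega_0|^2=2+2\omega\cdot\omega_0$. Since $s\mapsto s^{-d/2}J_{d/2}({\rm i}rs)$ is an even entire function of $s$, it is an entire function of $s^2$. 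Each term is therefore an entire function of $\omega$ through $x_i\cdot\omega$ and $\omega\cdot\omega_0$.

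Next, choose the orthonormal basis with $\mathrm e_1=\omega_0$ and parametrize $\omega=\omega(\theta,\bm\varphi_{d-2})$ as in \eqref{eqn:omega}. Exactly as in Lemma \ref{lemma:exponential:in:integralkernel} and Remark \ref{remark:extended:identity}, $F(\omega(\theta,\bm\varphi_{d-2}))=0$ for all $\theta\in\mathbb{C}$, with $\bm\varphi_{d-2}$ real and fixed. Here $|\omega+\omega_0|^2=2+2\cos\theta$ is understood via the entire continuation. Set $\theta=\widetilde\theta-{\rm i}R$ and let $R\to\infty$. Then $\cos\theta\sim\frac12 e^{R}e^{{\rm i}\widetilde\theta}$ and $\sin\theta\sim\frac{\rm i}2e^{R}e^{{\rm i}\widetilde\theta}$, so $\omega\sim\frac12e^{R}e^{{\rm i}\widetilde\theta}(\mathrm e_1+{\rm i}\eta)$ with $\eta\perp\mathrm e_1$ a real unit vector determined by $\bm\varphi_{d-2}$. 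Also $s:=|\omega+\omega_0|\sim e^{R/2}e^{{\rm i}\widetilde\theta/2}$.

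Now apply the Bessel asymptotics $J_\nu(z)\sim\sqrt{2/(\pi z)}\cos(z-\nu\pi/2-\pi/4)$ for large $|z|$ away from the negative real axis. Term $i$ then behaves like $s^{-(d+1)/2}$ times a combination of $\exp(\pm r_i s+x_i\cdot\omega)$. The growth rates involved are
\begin{equation*}
\Re\bigl(\pm r_i e^{R/2}e^{{\rm i}\widetilde\theta/2}\bigr)+\tfrac12e^{R}\,\Re\bigl(e^{{\rm i}\widetilde\theta}\,x_i\cdot(\mathrm e_1+{\rm i}\eta)\bigr).
\end{equation*}
The leading scale $e^R$ is governed by the linear functional $\ell_i=\cos\widetilde\theta\,(x_i\cdot\mathrm e_1)-\sin\widetilde\theta\,(x_i\cdot\eta)$, and the subleading scale $e^{R/2}$ by $r_i\cos(\widetilde\theta/2)$ (taking the $+$ sign with the appropriate branch).

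The separation argument proceeds by ordering the indices. Using Lemma \ref{lemma:sphere:and:hyperplane}, choose $\eta$ and $\widetilde\theta$ generically so that the values $\ell_i$ coincide exactly when the centers $x_i$ coincide. Keep $\cos(\widetilde\theta/2)\neq0$ and $\widetilde\theta$ away from values where the Bessel argument approaches the negative real axis. Among the indices with maximal $\ell_i$ (one common center), the pairs are distinct, so the radii differ. Hence the term with the largest $r_i$ dominates all others by a factor of at least $\exp(c\,e^{R/2})$. Dividing by its explicit asymptotic and letting $R\to\infty$ forces that $C_i=0$. We then remove the term and induct on $N$.

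The expected obstacle is the rigorous control of the Bessel asymptotics for complex argument along the chosen ray. The exponentially large branch $e^{r_is}$ must genuinely dominate the $e^{-r_is}$ branch and the error terms. The oscillatory factor $e^{{\rm i}\,\Im(\cdot)}$ must not cancel the leading term, which dividing by a nonvanishing modulus handles. One must also check that the genericity conditions on $\widetilde\theta$ and $\eta$ can be met simultaneously.
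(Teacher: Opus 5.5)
Your proposal is correct and follows essentially the paper's route: continue the identity holomorphically in $\theta$, evaluate along $\widetilde\theta-{\rm i}R$ via the Bessel formula, and peel off terms ordered first by $x_i\cdot\widetilde\omega$ (scale $e^{R}$) and then by $r_i$ (scale $e^{R/2}$, with the factor $|\cos(\widetilde\theta/2)|$ that you correctly keep), using a generic direction as in the set $S$. Writing the ball integral as an entire function of $(\omega+\omega_0)\cdot(\omega+\omega_0)=2+2\cos\theta$ is a tidy substitute for the branch bookkeeping of Lemma~\ref{lemma:Bessel:complexversion}; your slip $\sin\theta\sim\frac{\rm i}{2}e^{R}e^{{\rm i}\widetilde\theta}$ (it should be $\frac{1}{2{\rm i}}e^{R}e^{{\rm i}\widetilde\theta}$) only replaces $\eta$ by $-\eta$ in $\ell_i$ and is harmless.
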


\begin{proof}[Proof of Theorem \ref{theorem:balls}]
Fix any $\alpha_{\rm in}$ satisfying Assumption \ref{ass:alpha:general} and suppose that $\alpha=\alpha^i$ satisfies Assumption \ref{ass:alpha} for $i\in\{1,2\}$ with the parameters $N=N^i$, $r_j=r_j^i$, $x_j=x_j^i$ and $\alpha_j=\alpha_j^i$:
\begin{equation}\label{eq:alpha:spheres}
    \alpha^i(x) = \alpha_{\rm in}(x)+\sum_{j=1}^{N^i} \alpha_j^i \mathds{1}_{B_{r_j^i}(x_j^i)}(x),\quad\forall x\in\Om,\,i\in\{1,2\}.
\end{equation}
By combining \eqref{eq:alpha:spheres} with Lemma \ref{lemma:uniqueness:main:identity}, we obtain
$$\sum_{j=1}^{N^1} \alpha_j^1 \int_{B_{r_j^1}(x_j^1)} e^{x\cdot(\omega+\omega_0)}\,{\rm d}x=\sum_{j=1}^{N^2} \alpha_j^2 \int_{B_{r_j^2}(x_j^2)}e^{x\cdot(\omega+\omega_0)}\,{\rm d}x.$$
Using the change of variables $y={x-x_j^i}$ for each $i\in\{1,2\}$ and $j\in[ N^i]$, we arrive at
\begin{equation}\label{eq:main:identity:balls:general}
    \sum_{j=1}^{N^1} \alpha_j^1 e^{x_j^1\cdot\omega_0}e^{x_j^1\cdot\omega}\int_{B_{r_j^1}(0)} e^{y\cdot(\omega+\omega_0)}\,{\rm d}y=\sum_{j=1}^{N^2} \alpha_j^2 e^{x_j^2\cdot\omega_0}e^{x_j^2\cdot\omega}\int_{B_{r_j^2}(0)}e^{y\cdot(\omega+\omega_0)}\,{\rm d}y.
\end{equation}
Now for each $i\in\{1,2\}$, we can reduce the collection of tuples $\{(\alpha_j^i,r_j^i,x_j^i)\}_{j=1}^{N^i}$ satisfying \eqref{eq:alpha:spheres} by
\begin{itemize}
    \item If $\alpha_j^i=0$ for some $j\in[N^i]$, then we remove the tuple $(\alpha_j^i,r_j^i,x_j^i)$ from the collection;
    \item If $(r_j^i,x_j^i)=(r_k^i,x_k^i)$ for some $j\ne k\in[N^i]$, then for such $j$ and $k$, we merge the two tuples $(\alpha_j^i,r_j^i,x_j^i)$ and $(\alpha_k^i,r_k^i,x_k^i)$ into $(\alpha_j^i+\alpha_k^i,r_j^i,x_j^i)$. 
\end{itemize}
Thus we may assume in \eqref{eq:alpha:spheres} the additional properties $\alpha_j^i\ne0$ and $(r_j^i,x_j^i)\ne(r_k^i,x_k^i)$ for all $j\ne k\in[N^i]$.
Using the identity \eqref{eq:main:identity:balls:general}, we shall prove
\begin{equation}\label{eq:identify:tuples:balls}
    \{(\alpha_j^1,r_j^1,x_j^1)\}_{j=1}^{N^1}=\{(\alpha_j^2,r_j^2,x_j^2)\}_{j=1}^{N^2}
\end{equation}
First, we prove  $\{(r_j^1,x_j^1)\}_{j=1}^{N^1}=\{(r_j^2,x_j^2)\}_{j=1}^{N^2}$. Fix any $k\in[N^1]$ and suppose $(r_k^1,x_k^1)\not\in\{(r_j^2,x_j^2)\}_{j=1}^{N^2}$. Then, by Lemma \ref{lemma:main:linindep:general} and the identity \eqref{eq:main:identity:balls:general}, we must have $\alpha_k^1e^{x_k^1\cdot\omega_0}=0$, which contradicts the hypothesis.
This proves $(r_k^1,x_k^1)\in\{(r_i^2,x_i^2)\}_{i=1}^{N^2}$ for all $k\in[N^1]$.
Similarly, we also have $(r_k^2,x_k^2)\in\{(r_j^1,x_j^1)\}_{j=1}^{N^1}$ for all $k\in[N^2]$.
Thus, we arrive at $\{(r_j^1,x_j^1)\}_{j=1}^{N^1}=\{(r_j^2,x_j^2)\}_{j=1}^{N^2}$.
Since there holds $(r_j^i,x_j^i)\ne(r_k^i,x_k^i)$ for all $j\ne k\in[N^i]$, we deduce $N^1=N^2=:N$.
Now we rearrange the tuples so that $(r_j^1,x_j^1)=(r_j^2,x_j^2)=:(r_j,x_j)$ for all $j\in[N]$.
By Lemma \ref{lemma:main:linindep:general} and the relation \eqref{eq:main:identity:balls:general} again, we obtain
$$\alpha_j^1 e^{x_j\cdot\omega_0}=\alpha_j^2 e^{x_j\cdot\omega_0},\quad\forall j\in[N].$$
Thus, we have $\alpha_j^1=\alpha_j^2$ for all $j$, which proves \eqref{eq:identify:tuples:balls}.
This implies $\alpha^1(x)=\alpha^2(x)$ for all $x\in\Om$.
\end{proof}

\subsection{Proofs of Lemmas \ref{lemma:Bessel} and \ref{lemma:main:linindep:general}}\label{lemmas:balls}

We first recall preliminary properties of  the Gamma and Bessel functions.

\begin{lemma}[{\cite[(1.7.3)]{Szego:1975:OP}}]
\label{lemma:Gamma:properties}
The Gamma function $\Gamma(z)$ satisfies Legendre duplication formula $$\Gamma(z)\Gamma(z+\tfrac12)=2^{-2z+1}\sqrt{\pi}\Gamma(2z),\quad \forall z>0.$$
\end{lemma}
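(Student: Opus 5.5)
The statement is the Legendre duplication formula, cited from Szeg\H{o}, so a short self-contained argument through the Beta function suffices. The plan is to use the Euler Beta integral identity
\[
B(a,b):=\int_0^1 s^{a-1}(1-s)^{b-1}\,{\rm d}s=\frac{\Gamma(a)\Gamma(b)}{\Gamma(a+b)},\qquad a,b>0.
\]
This identity is obtained in the standard way. One writes $\Gamma(a)\Gamma(b)$ as a double integral over $(0,\infty)^2$. One then changes variables by $u=ts$, $v=t(1-s)$, whose Jacobian is $t$. The double integral then factors into $\Gamma(a+b)B(a,b)$, and Fubini's theorem is justified by positivity of the integrand.

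Next I would take $a=b=z$ and evaluate $B(z,z)$ in two ways. Using the symmetry $s\mapsto 1-s$ and the substitution $s=\frac{1-\sqrt{w}}{2}$, or equivalently $4s(1-s)=1-w$ on $[0,\frac12]$, one finds
\[
B(z,z)=2\int_0^{1/2}(s(1-s))^{z-1}\,{\rm d}s=2^{1-2z}\int_0^1 (1-w)^{z-1}w^{-1/2}\,{\rm d}w=2^{1-2z}B(\tfrac12,z).
\]
Applying the Beta identity to both sides then gives
\[
\frac{\Gamma(z)^2}{\Gamma(2z)}=2^{1-2z}\frac{\Gamma(\frac12)\Gamma(z)}{\Gamma(z+\frac12)}.
\]

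Finally, I would cancel $\Gamma(z)>0$ and insert $\Gamma(\frac12)=\sqrt\pi$, which is the Gaussian integral after the substitution $t=u^2$. This yields $\Gamma(z)\Gamma(z+\frac12)=2^{1-2z}\sqrt\pi\,\Gamma(2z)$ for all $z>0$.

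The only step needing care is the substitution in $B(z,z)$. There one must check the Jacobian ${\rm d}s=-\frac{1}{4}w^{-1/2}\,{\rm d}w$ and the exponent bookkeeping $(s(1-s))^{z-1}=4^{1-z}(1-w)^{z-1}$. These combine to the factor $2\cdot 4^{1-z}\cdot\frac14=2^{1-2z}$. Everything else is routine.
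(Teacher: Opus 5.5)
Your argument is correct. The Beta identity, the substitution $4s(1-s)=1-w$ with ${\rm d}s=-\tfrac14 w^{-1/2}{\rm d}w$, the constant $2\cdot 4^{1-z}\cdot\tfrac14=2^{1-2z}$, and the cancellation of $\Gamma(z)>0$ all check out.

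The paper gives no proof of this lemma. It simply quotes the formula from Szeg\H{o} \cite[(1.7.3)]{Szego:1975:OP}, so your proposal is a self-contained route rather than a reconstruction of the authors' argument. What your route buys is that the paper becomes self-contained modulo the Euler Beta identity. The paper already imports that identity from \cite[(2.13)]{Artin:1964:Gamma} in the proof of Lemma \ref{lemma:Beta:Gamma:relation}, so you could cite it there instead of re-deriving it via $u=ts$, $v=t(1-s)$.

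An equally short variant stays entirely within the paper's own toolkit:
\begin{itemize}
\item Apply Lemma \ref{lemma:Beta:Gamma:relation} with $a=b=z$ and use $\sin\theta\cos\theta=\tfrac12\sin 2\theta$.
\item After the symmetry $\phi\mapsto\pi-\phi$, the left side becomes $2^{1-2z}$ times the integral of that lemma with $(a,b)=(z,\tfrac12)$.
\item The case $a=b=\tfrac12$ of the same lemma gives $\Gamma(\tfrac12)^2=\pi$, so no separate Gaussian integral is needed.
\end{itemize}

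The citation, by contrast, buys only brevity, which is reasonable for so classical a formula.
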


\begin{lemma}
\label{lemma:Beta:Gamma:relation}
For all $a,b>0$, there holds
$\int_{0}^{\frac{\pi}{2}} \sin^{2a-1}\theta\cos^{2b-1}\theta \,{\rm d}\theta={\frac{1}{2}}\frac{\Gamma(a)\Gamma(b)}{\Gamma(a+b)}.$
\end{lemma}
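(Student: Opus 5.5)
Fix $a,b>0$. I will prove the Beta–Gamma identity by the classical Euler trick: write the product $\Gamma(a)\Gamma(b)$ as a double integral over the positive quadrant and then pass to polar coordinates.

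\emph{Step 1: rewrite each Gamma factor.} In $\Gamma(a)=\int_0^\infty e^{-t}t^{a-1}\,{\rm d}t$ I substitute $t=u^2$ with $u>0$, which gives
$$\Gamma(a)=2\int_0^\infty e^{-u^2}u^{2a-1}\,{\rm d}u.$$
The same substitution $t=v^2$ applied to $\Gamma(b)$ gives $\Gamma(b)=2\int_0^\infty e^{-v^2}v^{2b-1}\,{\rm d}v$.

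\emph{Step 2: form the product.} The two integrands are nonnegative, so Tonelli's theorem lets me combine them into a single integral:
$$\Gamma(a)\Gamma(b)=4\int_0^\infty\!\!\int_0^\infty e^{-(u^2+v^2)}u^{2a-1}v^{2b-1}\,{\rm d}u\,{\rm d}v.$$

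\emph{Step 3: polar coordinates.} I set $u=\rho\sin\theta$ and $v=\rho\cos\theta$ with $\rho>0$ and $\theta\in(0,\tfrac{\pi}{2})$. This map is a bijection onto the open quadrant, and its Jacobian is $\rho$. The integrand becomes
$$e^{-\rho^2}\rho^{2a+2b-1}\sin^{2a-1}\theta\cos^{2b-1}\theta,$$
which factors into a function of $\rho$ times a function of $\theta$. Applying Tonelli's theorem again splits the integral:
$$\Gamma(a)\Gamma(b)=4\left(\int_0^\infty e^{-\rho^2}\rho^{2(a+b)-1}\,{\rm d}\rho\right)\left(\int_0^{\frac{\pi}{2}}\sin^{2a-1}\theta\cos^{2b-1}\theta\,{\rm d}\theta\right).$$

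\emph{Step 4: identify the radial factor.} By Step 1 with $a$ replaced by $a+b$, the radial integral equals $\tfrac12\Gamma(a+b)$. Hence
$$\Gamma(a)\Gamma(b)=2\,\Gamma(a+b)\int_0^{\frac{\pi}{2}}\sin^{2a-1}\theta\cos^{2b-1}\theta\,{\rm d}\theta.$$
Since $\Gamma(a+b)>0$, dividing by $2\Gamma(a+b)$ gives the claim.

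There is no real obstacle. The only care needed is in justifying convergence and the use of Tonelli. Near $\theta=0$ and $\theta=\tfrac{\pi}{2}$ the angular integrand behaves like $\theta^{2a-1}$ and $(\tfrac{\pi}{2}-\theta)^{2b-1}$ respectively, which are integrable because $a,b>0$. The radial integrand is integrable because the exponential decay of $e^{-\rho^2}$ controls large $\rho$. Positivity of all integrands is what makes Tonelli's theorem applicable.
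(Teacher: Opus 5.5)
Your proof is correct, but it takes a different route from the paper. The paper does not derive the identity from scratch. It quotes Euler's Beta integral $\int_0^1 t^{a-1}(1-t)^{b-1}\,{\rm d}t=\Gamma(a)\Gamma(b)/\Gamma(a+b)$ from Artin's monograph and substitutes $t=\sin^2\theta$, ${\rm d}t=2\sin\theta\cos\theta\,{\rm d}\theta$. This turns $t^{a-1}(1-t)^{b-1}\,{\rm d}t$ into $2\sin^{2a-1}\theta\cos^{2b-1}\theta\,{\rm d}\theta$ and produces the factor $\tfrac12$.

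Your argument instead proves the Beta--Gamma relation itself, using the Gaussian double integral and polar coordinates. Because you parametrize by $u=\rho\sin\theta$, $v=\rho\cos\theta$, you land directly on the trigonometric form and never need the integral over $[0,1]$. Undoing the paper's substitution, your computation also recovers Artin's formula.

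The paper's route is a one-line reduction to a standard reference. Yours is self-contained, at the cost of justifying Tonelli and the change of variables, which you do correctly. In fact positivity alone suffices: since $\Gamma(a)\Gamma(b)<\infty$ and $\Gamma(a+b)>0$, the identity of nonnegative extended reals already forces the angular integral to be finite. Your separate endpoint integrability check is therefore harmless but not needed.
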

\begin{proof}
From {\cite[(2.13)]{Artin:1964:Gamma}}, there holds $\int_{0}^{1} t^{a-1}(1-t)^{b-1} \,{\rm d}t=\frac{\Gamma(a)\Gamma(b)}{\Gamma(a+b)}.$ Then with the substitution $t=\sin^2\theta$ (i.e., ${\rm d}t=2\sin \theta \cos\theta {\rm d}\theta$), we obtain the desired identity.
\end{proof}

\begin{definition}
\label{definition:Bessel}
The Bessel function $J_\nu(z)$ of the first kind of order $\nu\in\mathbb{R}$ with $\nu\ge 0$ is defined by
\begin{align*}
J_\nu(z)=\sum_{n=0}^\infty \frac{(-1)^n}{n!\Gamma(\nu+n+1)}\left(\frac{z}{2}\right)^{\nu+2n},\quad\forall z\in\mathbb{R}.
\end{align*}
If $\nu$ is an integer, $J_\nu(z)$  extends to $z\in\mathbb{C}$ holomorphically.
If $\nu+\frac12$ is an integer, $J_\nu(z)$ extends to $z\in\mathbb{C}\backslash\{z\in\mathbb{R}\,:\,z\le0\}$ holomorphically via $z^{\frac{1}{2}}=e^{\frac{1}{2}\log z}$ $($see, e.g., \cite[Theorem 6.1]{Stein:2003:ComplexAnalysis} for the existence of the holomorphic branch of logarithm$)$.
\end{definition}

\begin{lemma}[{\cite[(1.71.9)]{Szego:1975:OP}}]
\label{lemma:Bessel:asymptotic}
Fix $\nu\ge0$ and $\delta>0$. The Bessel function $J_\nu (z)$ satisfies the following asymptotic relation for $\arg (z)\in(-\frac{\pi}{2}+\delta,\frac{3\pi}{2}-\delta)$ as $|z|\to\infty$:
$$e^{\frac{\nu}{2}\pi{\rm i}}J_\nu(e^{-\frac{\pi}{2}{\rm i}}z)=(2\pi z)^{-\frac12}e^z(1+O(|z|^{-1})) + (2\pi z)^{-\frac12}e^{-z+(\nu+\frac12)\pi{\rm i}}(1+O(|z|^{-1})).$$
\end{lemma}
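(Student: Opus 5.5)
The plan is to deduce the relation from the classical large-argument asymptotics of the Hankel functions. We write $J_\nu=\tfrac12\bigl(H^{(1)}_\nu+H^{(2)}_\nu\bigr)$ and substitute $w=e^{-\frac{\pi}{2}{\rm i}}z$. The condition $\arg z\in(-\frac{\pi}{2}+\delta,\frac{3\pi}{2}-\delta)$ is exactly $\arg w\in(-\pi+\delta,\pi-\delta)$. On this sector both of the following expansions hold uniformly:
$$H^{(1)}_\nu(w)=\Bigl(\tfrac{2}{\pi w}\Bigr)^{\frac12}e^{{\rm i}(w-\frac{\nu\pi}{2}-\frac{\pi}{4})}\bigl(1+O(|w|^{-1})\bigr),\qquad H^{(2)}_\nu(w)=\Bigl(\tfrac{2}{\pi w}\Bigr)^{\frac12}e^{-{\rm i}(w-\frac{\nu\pi}{2}-\frac{\pi}{4})}\bigl(1+O(|w|^{-1})\bigr).$$
The first is valid for $-\pi+\delta<\arg w<2\pi-\delta$ and the second for $-2\pi+\delta<\arg w<\pi-\delta$, so our sector lies in the intersection.

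To justify these expansions rather than quote them, I would use the Schläfli--Sommerfeld (Hankel) contour integral representations
$$H^{(1,2)}_\nu(w)=\Bigl(\tfrac{2}{\pi w}\Bigr)^{\frac12}\frac{e^{\pm{\rm i}(w-\frac{\nu\pi}{2}-\frac{\pi}{4})}}{\Gamma(\nu+\frac12)}\int_0^\infty e^{-s}s^{\nu-\frac12}\Bigl(1\pm\frac{{\rm i}s}{2w}\Bigr)^{\nu-\frac12}{\rm d}s,$$
first for $\Re w>0$ and then continued by rotating the ray of integration. The factor $(1\pm{\rm i}s/(2w))^{\nu-\frac12}$ is $1+O(s/|w|)$ uniformly for $s$ in the rotated contour once $\arg w$ stays $\delta$ away from the singular direction. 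Watson's lemma then gives the $1+O(|w|^{-1})$ error. The Gamma identities in Lemmas~\ref{lemma:Gamma:properties} and~\ref{lemma:Beta:Gamma:relation} are what make the normalizing constant come out as $\Gamma(\nu+\frac12)$.

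With the expansions in hand, the rest is bookkeeping of phases. We have ${\rm i}w=z$. Since $w^{-1/2}=({-{\rm i}z})^{-1/2}=e^{\frac{\pi}{4}{\rm i}}z^{-1/2}$ for the branches in use, we get $(2/(\pi w))^{1/2}=e^{\frac{\pi}{4}{\rm i}}(2/(\pi z))^{1/2}$. Hence $\tfrac12 H^{(1)}_\nu(w)=(2\pi z)^{-\frac12}e^{z}e^{-\frac{\nu}{2}\pi{\rm i}}(1+O(|z|^{-1}))$. Similarly, $\tfrac12 H^{(2)}_\nu(w)=(2\pi z)^{-\frac12}e^{-z}e^{\frac{\nu}{2}\pi{\rm i}+\frac{\pi}{2}{\rm i}}(1+O(|z|^{-1}))$. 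Multiplying the sum by $e^{\frac{\nu}{2}\pi{\rm i}}$ yields exactly the two terms in the statement, including the phase $e^{(\nu+\frac12)\pi{\rm i}}$ in the second.

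The main obstacle is consistency of branches. The power $w^{\nu}$ implicit in $J_\nu$ and the roots $w^{1/2}$, $z^{1/2}$ must be the holomorphic branches fixed in Definition~\ref{definition:Bessel}, continued from the positive axis. In particular $z^{\pm1/2}$ on the sector $(-\frac{\pi}{2},\frac{3\pi}{2})$ must be the continuation of $w^{\pm 1/2}$ from $\arg w\in(-\pi,\pi)$. I would check this by verifying the formula on one ray, for instance $z=x\,e^{\frac{\pi}{2}{\rm i}}$ (so $w=x>0$ real), and then extending to the whole sector by analytic continuation. Uniformity of the $O(|z|^{-1})$ term comes from the $\delta$-margin in Watson's lemma. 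Since both exponentials are kept, no Stokes-line subtlety arises in the interior of the sector.
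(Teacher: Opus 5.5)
Your argument is correct, but it is not what the paper does. The paper gives no proof of this lemma and simply quotes it from Szeg\H{o} (1.71.9). You instead derive it from the Hankel expansions, using $J_\nu=\frac12(H^{(1)}_\nu+H^{(2)}_\nu)$ and $w=e^{-\frac{\pi}{2}{\rm i}}z$. The bookkeeping checks out:
\begin{itemize}
\item the sector $\arg z\in(-\frac{\pi}{2}+\delta,\frac{3\pi}{2}-\delta)$ becomes $\arg w\in(-\pi+\delta,\pi-\delta)$, which lies in the common validity region of both Hankel expansions;
\item $\frac12\bigl(\frac{2}{\pi w}\bigr)^{1/2}=e^{\frac{\pi}{4}{\rm i}}(2\pi z)^{-1/2}$;
\item after multiplying by $e^{\frac{\nu}{2}\pi{\rm i}}$, the two phases come out as $1$ and $e^{(\nu+\frac12)\pi{\rm i}}$.
\end{itemize}
Equivalently, the left-hand side is $I_\nu(z)$, and the statement is the standard large-$|z|$ expansion of $I_\nu$ on that sector.

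What your route buys over the bare citation is explicit branch conventions. $J_\nu(w)$ is evaluated with principal $\arg w$, as in Definition~\ref{definition:Bessel}. By contrast, $(2\pi z)^{-1/2}$ must be the root continued along $\arg z\in(-\frac{\pi}{2},\frac{3\pi}{2})$, which is not the principal root once $\arg z>\pi$. The paper leaves this implicit, and its Definition~\ref{definition:Bessel} only discusses integer and half-integer orders.

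The cost is that your argument is self-contained only modulo Hankel's integral representation and its identification with $H^{(1)}_\nu$ and $H^{(2)}_\nu$. That is a classical result of the same standing as the one the paper cites. Your remark that Lemmas~\ref{lemma:Gamma:properties} and~\ref{lemma:Beta:Gamma:relation} produce the constant $\Gamma(\nu+\frac12)$ only has content if you actually derive that representation (e.g.\ from Poisson's integral), which you do not.

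One small correction in the Watson step: the bound $(1\pm{\rm i}s/(2w))^{\nu-\frac12}=1+O(s/|w|)$ uniformly in $s$ holds only for $\nu\le\frac32$. In general, for $x=\pm{\rm i}s/(2w)$ with $s$ on the rotated ray and $\arg w$ in the $\delta$-reduced sector, one has $|(1+x)^{\nu-\frac12}-1|\le C|x|(1+|x|)^{\max(\nu-\frac32,0)}$. After integration against $e^{-s}s^{\nu-\frac12}$ this still gives the uniform $O(|w|^{-1})$ error.
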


Now we prove Lemmas \ref{lemma:Bessel} and \ref{lemma:main:linindep:general}, which are the main tools in the proof of Theorem \ref{theorem:balls}.
\begin{proof}[Proof of Lemma \ref{lemma:Bessel}]
We prove the cases $d=2$ and $d\geq 3$ separately.\\
Case (i) $d=2$. Let $\omega_1=\frac{y}{|y|}$ and $\omega_2\in\mathbb{S}^1$ such that $\{\omega_1,\omega_2\}$ is an orthogonal basis of $\mathbb{R}^2$.
Changing variables to the polar coordinates $(\tau,\theta)$ defined by $x=(\tau\cos\theta)\omega_1+(\tau\sin\theta)\omega_2$ and the definition $\cosh s = \frac{1}{2}(e^s+e^{-s})$ for all $s\in\mathbb{R}$ give
    \begin{align*}
         \int_{B_{r}(0)} e^{ x\cdot y}\,{\rm d}x&= \int_{-\pi}^{\pi} \int_0^r e^{|y|\tau\cos\theta}\tau \,{\rm d}\tau\,{\rm d}\theta= 4\int_{0}^{\frac\pi2} \int_0^r \cosh(|y|\tau\cos\theta)\tau \,{\rm d}\tau\,{\rm d}\theta.
    \end{align*}
The Taylor expansion  $\cosh s = \sum_{m=0}^\infty \frac{s^{2m}}{\Gamma(2m+1)}$ (for $s\in\mathbb{R}$),
and term-by-term integration
and Lemma \ref{lemma:Beta:Gamma:relation} (with $a=\frac{1}{2}$ and $b=m+\frac{1}{2}$) give
\begin{align*}
\int_{B_{r}(0)} e^{ x\cdot y}\,{\rm d}x&=4\sum_{m=0}^\infty  \frac{|y|^{2m}}{\Gamma(2m+1)} \int_{0}^{\frac{\pi}{2}} \cos^{2m}\theta\,{\rm d}\theta\int_0^r \tau^{2m+1}\,{\rm d}\tau\\ &=4\sum_{m=0}^\infty  \frac{|y|^{2m}}{\Gamma(2m+1)} \frac{\Gamma(\frac12)\Gamma(m+\frac12)}{2\Gamma(m+1)}\frac{r^{2m+2}}{2m+2}.
\end{align*}
Now by the Legendre duplication formula in Lemma \ref{lemma:Gamma:properties} (with $z=m+\frac12$),
\begin{equation}\label{eqn:id-Gamma} \Gamma(m+\tfrac12)=2^{-2m}\sqrt{\pi}\frac{\Gamma(2m+1)}{\Gamma(m+1)}.
\end{equation}
From the preceding two identities, $\Gamma(\frac12)=\sqrt{\pi}$ and the definition of $J_\nu(z)$, we obtain
    $$\int_{B_{r}(0)} e^{ x\cdot y}\,{\rm d}x = {\pi}r^2 \sum_{m=0}^\infty   \frac{2^{-2m}|y|^{2m}r^{2m}}{\Gamma(m+1)\Gamma(m+2)}=-2{\rm i}{\pi}r |y|^{-1} J_1({\rm i}r|y|).$$

\noindent
Case (ii) $d\ge3$. Let $\omega_1=\frac{y}{|y|}$ and fix any $\{\omega_i\}_{i=2}^d\subset\mathbb{S}^{d-1}$ such that $\{\omega_i\}_{i=1}^d$ forms an orthonormal basis of $\mathbb{R}^d$. We change variables to the hyperspherical coordinates $(\tau,\theta,\bm{\varphi}_{d-2})$ for $\mathbb{R}^d$ defined by
    $$x=(\tau\cos\theta)\omega_1+\sum_{i=2}^{d-1}\left(\tau\sin\theta\left(\prod_{j=1}^{i-2}\sin\varphi_j\right)\cos\varphi_{i-1}\right)\omega_{i}+\left(\tau\sin\theta\left(\prod_{j=1}^{d-2}\sin\varphi_j\right)\right)\omega_{d}.$$
The volume element ${\rm d}x$ is given by ${\rm d}x=(\tau^{d-1}\,{\rm d}\tau )(\sin^{d-2}\theta\,{\rm d}\theta)\prod_{i=1}^{d-2}(\sin^{d-i-2}\varphi_i\,{\rm d}\varphi_i).$
Thus, with $|\mathbb{S}^{d-2}|$ being the surface area of the unit sphere in $\mathbb{R}^{d-1}$, there holds
    \begin{align}
         \int_{B_{r}(0)} e^{ x\cdot y}\,{\rm d}x&= \int_0^{\pi} \int_0^r e^{|y|\tau\cos\theta}\tau^{d-1} \sin^{d-2}\theta\,{\rm d}\tau\,{\rm d}\theta \int_{-\pi}^{\pi}\,{\rm d}\varphi_{d-2}\prod_{i=1}^{d-3}\int_{0}^{\pi}\sin^{d-i-2}\varphi_i\,{\rm d}\varphi_i\nonumber\\
         &=|\mathbb{S}^{d-2}|\int_0^{\pi} \int_0^r e^{|y|\tau\cos\theta}\tau^{d-1} \sin^{d-2}\theta\,{\rm d}\tau\,{\rm d}\theta.\label{eq:3D:integral}
    \end{align}
The Taylor expansion of $e^z$, term-by-term integration
and Lemma \ref{lemma:Beta:Gamma:relation} (with $a=\frac{d-1}{2}$ and $b=m+\frac{1}{2}$) give
    \begin{align*}
        &\int_0^{\pi} \int_0^r e^{|y|\tau\cos\theta}\tau^{d-1} \sin^{d-2}\theta\,{\rm d}\tau\,{\rm d}\theta\\
        =&\sum_{m=0}^\infty\frac{|y|^{2m}}{\Gamma(2m+1)}\int_0^r \tau^{2m+d-1}\,{\rm d}\tau \int_0^{\pi} \cos^{2m}\theta\sin^{d-2}\theta\,{\rm d}\theta\\
        =&\sum_{m=0}^\infty\frac{|y|^{2m}}{\Gamma(2m+1)}\frac{r^{2m+d}}{2m+d}\frac{\Gamma(\frac{d-1}{2})\Gamma(m+\frac{1}{2})}{\Gamma(m+\frac{d}{2})}.
    \end{align*}
    By Legendre duplication formula (Lemma \ref{lemma:Gamma:properties} with $z=m+\frac12$) (or the identity \eqref{eqn:id-Gamma}), we have
    \begin{align}
       & \int_0^{\pi} \int_0^r e^{|y|\tau\cos\theta}\tau^{d-1} \sin^{d-2}\theta\,{\rm d}\tau\,{\rm d}\theta\nonumber\\
       =&\frac{\sqrt{\pi}}{2}\Gamma(\tfrac{d-1}{2})\sum_{m=0}^\infty\frac{2^{-2m}|y|^{2m}r^{2m+d}}{\Gamma(m+1)\Gamma(m+1+\frac{d}{2})}\nonumber\\
        =&\frac{\sqrt{\pi}}{2}\Gamma(\tfrac{d-1}{2})(2r|y|^{-1})^{\frac{d}{2}}e^{-\frac{d}{4}\pi{\rm i}} J_{\frac{d}{2}}({\rm i} r |y|).\label{eq:Taylor:Bessel}
    \end{align}
By combining \eqref{eq:3D:integral} and \eqref{eq:Taylor:Bessel} with $|\mathbb{S}^{d-2}|=\frac{2\pi^{\frac{d-1}{2}}}{\Gamma(\frac{d-1}{2})}$, we obtain the desired assertion for $d\ge3$.
\end{proof}

The next lemma extends the formula in Lemma \ref{lemma:Bessel} to complex variables in $\widetilde{K}=\{\theta\in\mathbb{C}\,:\,|\Re(\theta)|<\frac{\pi}{2}\}$.
\begin{lemma}\label{lemma:Bessel:complexversion}
    Let $\{\mathrm{e}_j\}_{j=1}^d$ be any orthonormal basis of $\mathbb{R}^d$ and let $\omega:\mathbb{C}^{d-1}\to\mathbb{C}^d$ be defined in \eqref{eqn:omega}. Fix any $\bm \varphi_{d-2}\in\mathbb{R}^{d-2}$.
    The following properties hold:
    \begin{itemize}
        \item[\rm(a)]
        $|\omega(\theta,\bm \varphi_{d-2})+\mathrm{e}_1|=2|\cos\tfrac{\theta}{2}|$ for all $\theta\in\mathbb{R}$.
        \item[\rm(b)] Let $r>0$, and $f(\theta)=2\cos\tfrac{\theta}{2}$ for all $\theta\in\mathbb{C}$. The functions $\theta\mapsto f(\theta)^{-\frac{d}{2}}$ and $\theta\mapsto J_{\frac{d}{2}}({\rm i}r f(\theta))$ are well defined and holomorphic in $\theta\in \widetilde{K}$. Moreover, there holds
        \begin{equation}\label{eq:extended:explicit:balls}
            \int_{B_{r}(0)} e^{x\cdot (\omega(\theta,\bm \varphi_{d-2})+\mathrm{e}_1)}\,{\rm d}x =
        (2\pi r)^{\frac{d}{2}} f(\theta)^{-\frac{d}{2}} e^{-\frac{d}{4}\pi{\rm i}} J_{\frac{d}{2}}({\rm i}rf(\theta)),\quad\forall\theta\in \widetilde{K}.
        \end{equation}
    \end{itemize}
\end{lemma}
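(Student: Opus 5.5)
For (a), I will compute directly with real $\theta$ and $\bm\varphi_{d-2}$. Since $\omega(\theta,\bm\varphi_{d-2})\in\mathbb{S}^{d-1}$ and $\omega\cdot\mathrm{e}_1=\cos\theta$, we get
\[
|\omega+\mathrm{e}_1|^2 = 1+2\cos\theta+1 = 2+2\cos\theta = 4\cos^2\tfrac{\theta}{2}.
\]
Taking square roots gives $|\omega+\mathrm{e}_1|=2|\cos\frac\theta2|$.

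For (b), I will first settle well-definedness and holomorphy. On $\widetilde K$ we have $\theta/2$ in the strip $|\Re(\theta/2)|<\pi/4$. Writing $\theta/2=a+{\rm i}b$, we get $\Re\cos(\theta/2)=\cos a\cosh b>0$. Hence $f$ maps $\widetilde K$ into the right half-plane $\{\Re z>0\}$, where the principal branch of $z^{-d/2}$ is holomorphic. Moreover ${\rm i}rf(\theta)$ lies in the upper half-plane, which avoids $(-\infty,0]$. So by Definition~\ref{definition:Bessel}, $J_{d/2}({\rm i}rf(\theta))$ is holomorphic: this is immediate for $d$ even, and for $d$ odd it follows by composition with the holomorphic branch. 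For cleanliness I will note an alternative. The product $z^{-d/2}J_{d/2}(\mathrm{i}rz)$ is given by the entire even power series $\sum_m c_m z^{2m}$ obtained in the proof of Lemma~\ref{lemma:Bessel} (up to the constant $e^{-\frac d4\pi{\rm i}}({\rm i}r/2)^{d/2}$ and a consistent branch choice). Hence the right-hand side of \eqref{eq:extended:explicit:balls} is actually $(2\pi r)^{d/2}$ times an entire function of $f(\theta)^2=4\cos^2\frac\theta2=2+2\cos\theta$.

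The identity itself will come from the identity theorem. Set $L(\theta)$ to be the left-hand side of \eqref{eq:extended:explicit:balls}. It is holomorphic in $\theta\in\mathbb{C}$ by Lemma~\ref{lemma:exponential:in:integralkernel} applied with $\beta=e^{x\cdot\mathrm{e}_1}\mathds 1_{B_r(0)}$, after translating the domain; the proof there works for any bounded set. The right-hand side $R(\theta)$ is holomorphic on the connected open set $\widetilde K$ by the first step. For real $\theta\in(-\pi,\pi)$, part (a) gives $|y|=f(\theta)>0$, where $y=\omega+\mathrm{e}_1\neq0$ is a real vector. Lemma~\ref{lemma:Bessel} then yields $L(\theta)=R(\theta)$ on $\widetilde K\cap\mathbb{R}=(-\pi/2,\pi/2)$, a set with accumulation points. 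By unique continuation, $L=R$ on all of $\widetilde K$.

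The main obstacle is the branch bookkeeping for odd $d$. I must check that the branch of $f^{-d/2}$ and the branch of $z^{1/2}$ inside $J_{d/2}$ (from Definition~\ref{definition:Bessel}) agree with the real formula of Lemma~\ref{lemma:Bessel}, where $|y|^{-d/2}$ is the positive root and $J_{d/2}({\rm i}r|y|)$ uses $({\rm i}r|y|/2)^{d/2}=e^{\frac d4\pi{\rm i}}(r|y|/2)^{d/2}$ on the principal branch. To handle this, I will verify that at real $\theta$ both sides reduce to the same real-valued power series in $|y|^2$, as in the final lines of the proof of Lemma~\ref{lemma:Bessel}. Continuity of the principal branches on the simply connected region then propagates the agreement.
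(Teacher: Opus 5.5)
Your proof is correct and follows the paper's argument. Part (a) is the direct computation $|\omega+\mathrm{e}_1|^2=2+2\cos\theta$; part (b) shows $f(\widetilde K)$ lies in the right half-plane, so that ${\rm i}rf(\theta)$ avoids $(-\infty,0]$, and then uses holomorphy of both sides, agreement on $(-\pi/2,\pi/2)$ via part (a) and Lemma~\ref{lemma:Bessel}, and the identity theorem.

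Your principal-branch $f(\theta)^{-d/2}$ coincides with the paper's $e^{-\frac d4\log(2+2\cos\theta)}$ on $\widetilde K$, because $f(\theta)$ lies in the right half-plane there. The branch bookkeeping you flag is also automatic: for real $\theta\in(-\pi/2,\pi/2)$ the right-hand side is literally the expression of Lemma~\ref{lemma:Bessel} with $|y|=f(\theta)>0$.
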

\begin{proof}
Part (a) follows directly as
\begin{align}
\nonumber
&\quad |\omega(\theta,\bm \varphi_{d-2})+{\mathrm{e}}_1|=\left(\sum_{i=1}^d |(\omega(\theta,\bm\varphi_{d-2})+{\mathrm{e}}_1)\cdot{\mathrm{e}}_i|^2\right)^{\frac12}\\
\nonumber&=\left((\cos\theta+1)^2+\sum_{i=2}^{d-1}\left(\sin\theta\left(\prod_{j=1}^{i-2}\sin\varphi_j\right)\cos\varphi_{i-1}\right)^2+\left(\sin\theta\left(\prod_{j=1}^{d-3}\sin\varphi_j\right)\right)^2\right)^{\frac12}\\
\nonumber&=\left((\cos\theta+1)^2+\sin^2\theta\right)^{\frac12}=\left(2+2\cos\theta\right)^{\frac12}=\left(4\cos^2\tfrac{\theta}{2}\right)^{\frac12}=2|\cos\tfrac{\theta}{2}|,\quad\forall\theta\in\mathbb{R}.
\end{align}
Next we prove part (b). Let $\log$ be the holomorphic branch of the logarithm function defined on $\mathbb{C}_+:=\{z\in\mathbb{C}\,:\,\Re(z)>0\}$  \cite[Theorem 6.1]{Stein:2003:ComplexAnalysis}.
Let $J_{\frac{d}{2}}$ be defined on $\mathbb{C}\backslash\{x\in\mathbb{R}\,:\,x\le 0\}$ as in Definition \ref{definition:Bessel}.
We claim that both $f(\theta)^{-\frac{d}{2}}$ and $J_{\frac{d}{2}}({\rm i}rf(\theta))$ for $\theta\in \widetilde K$ are well-defined as holomorphic functions via
\begin{align*}
    f(\theta)^{-\frac{d}{2}}&=e^{-\tfrac{d}{4}\log(2+2\cos\theta)}\quad \mbox{and}\quad
    J_{\frac{d}{2}}({\rm i}rf(\theta))=J_{\frac{d}{2}}\left({\rm i}re^{\frac{1}{2}\log(2+2\cos\theta)}\right),
\end{align*}
which are obviously compatible with the definition $f(\theta)=2|\cos\tfrac{\theta}{2}|$ for $\theta\in\mathbb{R}$.
Note that there holds
$$\Re\left(2+2\cos\theta\right)=\Re\left(2+e^{{\rm i}\theta}+e^{-{\rm i}\theta}\right)=2+(e^{-\Im(\theta)}+e^{\Im(\theta)})\cos(\Re(\theta)),\quad\forall \theta\in\mathbb{C}.$$
For all $\theta\in\widetilde K$, we have $\cos(\Re(\theta))>0$, and thus
there holds $2+2\cos\theta\in \mathbb{C}_+$.
Hence $f(\theta)^{-\frac{d}{2}}$ is well-defined and holomorphic in $\theta\in \widetilde{K }$.
Also, we have
$${\rm i}re^{\frac{1}{2}\log (2+2\cos \widetilde K)}\subset{\rm i}re^{\frac{1}{2}\log \mathbb{C}_+}\subset\{z\in\mathbb{C}\,:\,|\arg(z)-\tfrac{\pi}{2}|<\tfrac{\pi}{4}\}\subset \mathbb{C}\backslash\{z\in\mathbb{R}\,:\,z\le 0\},$$
and so $J_{\frac{d}{2}}({\rm i}rf(\theta))$ is well-defined and holomorphic in $\theta\in \widetilde K$.
Thus both $f(\theta)^{-\frac{d}{2}}$ and $J_{\frac{d}{2}}({\rm i}r_i f(\theta))$ are holomorphic in $\theta\in \widetilde{K}$, so $
(2\pi r)^{\frac{d}{2}} f(\theta)^{-\frac{d}{2}} e^{-\frac{d}{4}\pi{\rm i}} J_{\frac{d}{2}}({\rm i}rf(\theta))$ is holomorphic in $\theta\in \widetilde{K}$.
By Lemma \ref{lemma:exponential:in:integralkernel}, $\int_{B_{r}(0)} e^{x\cdot (\omega(\theta,\bm \varphi_{d-2})+\mathrm{e}_1)}\,{\rm d}x$ is also holomorphic in $\theta\in\mathbb{C}$.
Moreover, by part (a) and Lemma \ref{lemma:Bessel}, the identity \eqref{eq:extended:explicit:balls} holds for all $\theta\in\mathbb{R}$.
Thus, by the unique continuation property of holomorphic functions, the identity \eqref{eq:extended:explicit:balls} holds for all $\theta\in \widetilde{K}$.
\end{proof}

Now we can state the proof of Lemma \ref{lemma:main:linindep:general}.
\begin{proof}[Proof of Lemma \ref{lemma:main:linindep:general}]
The proof employs Lemmas \ref{lemma:Bessel:asymptotic} and \ref{lemma:Bessel:complexversion}.
Set $\hat{\mathrm{e}}_1:=\omega_0$ and choose any $\hat{\mathrm{e}}_2,\dots,\hat{\mathrm{e}}_{d}\in\mathbb{S}^{d-1}$ so that $\{\hat{\mathrm{e}}_i\}_{i=1}^{d}$ forms an orthonormal basis of $\mathbb{R}^d$.
We define $\omega:\mathbb{C}^{d-1}\to\mathbb{C}^d$ by \eqref{eqn:omega} with $\mathrm{e}_i=\hat{\mathrm{e}}_i$ for all $i\in[N]$.
Since $\omega$ maps $\mathbb{R}^{d-1}$ to $\mathbb{S}^{d-1}$, the hypothesis of Lemma \ref{lemma:main:linindep:general} implies
\begin{equation}\label{eq:linind:hypothesis}
   {\rm I}(\theta,\bm\varphi_{d-2}):= \sum_{i=1}^N C_i e^{x_i\cdot\omega(\theta,\bm\varphi_{d-2})}\int_{B_{r_i}(0)} e^{ x\cdot (\omega(\theta,\bm\varphi_{d-2})+\hat{\mathrm{e}}_1)}\,{\rm d}x=0,\quad\forall (\theta,\bm\varphi_{d-2})\in\mathbb{R}^{d-1}.
\end{equation}
With $\beta(x)=\sum_{i=1}^N C_i e^{(x-x_i)\cdot\hat{\mathrm{e}}_1} \mathds{1}_{B_{r_i}(x_i)}(x)$,  Lemma \ref{lemma:exponential:in:integralkernel} gives the holomorphicity of ${\rm I}(\theta,\bm\varphi_{d-2})$ with respect to $\theta\in \mathbb{C}$ for each fixed $\bm \varphi_{d-2}\in\mathbb{R}^{d-2}$.
By the unique continuation property of holomorphic functions, \eqref{eq:linind:hypothesis} holds for all $(\theta,\bm \varphi_{d-2})\in\mathbb{C}\times\mathbb{R}^{d-2}$.
By Lemma \ref{lemma:Bessel:complexversion}(b), for all $(\theta,\bm \varphi_{d-2})\in \widetilde K\times\mathbb{R}^{d-2}$,
\begin{equation}\label{eq:3D:linind:diffrad}
\sum_{i=1}^N C_ie^{x_i\cdot\omega(\theta,\bm\varphi_{d-2})}(2\pi r_i)^{\frac{d}{2}} f(\theta)^{-\frac{d}{2}}e^{-\frac{d}{4}\pi{\rm i}}J_{\frac{d}{2}}({\rm i}r_i f(\theta))=0,
\end{equation}
with $f(\theta)=2\cos\tfrac{\theta}{2}$ for all $\theta\in \widetilde K$.
Let
\begin{equation}\label{eq:def:S}
    {S}:=\{v\in\mathbb{S}^{d-1}\subset\mathbb{R}^d\,:\,(x_i-x_j)\cdot v\ne0\mbox{ whenever }x_i\ne x_j\mbox{ with }i,j\in[N]\}.
\end{equation}
By Lemma \ref{lemma:sphere:and:hyperplane}, the set $S$ is open and dense in $\mathbb{S}^{d-1}$. Thus, there exists some $(\widetilde{\theta},\widetilde{\bm\varphi}_{d-2})\in[0,\frac\pi2)\times[0,\pi]^{d-3}\times[0,2\pi]$ satisfying
$\widetilde{\omega}:=\omega(\widetilde{\theta},\widetilde{\bm\varphi}_{d-2})\in {S}$.
Fix any such $(\widetilde{\theta},\widetilde{\bm\varphi}_{d-2})$ and $\widetilde{\omega}$.
Let $\widetilde{\theta}_R:=\widetilde{\theta}-{\rm i}R$ for all $R>0$. Then, clearly, $\widetilde{\theta}_R\in \widetilde K$, and the identity \eqref{eq:3D:linind:diffrad} holds with $(\theta,\bm\varphi_{d-2})=(\widetilde{\theta}_R,\widetilde{\bm\varphi}_{d-2})$ for all $R>0$.
Next we conclude $C_i=0$ for all $i\in[N]$ from the identity \eqref{eq:3D:linind:diffrad} on the half-line $\theta\in\{\widetilde{\theta}_R\,:\,R>0\}$.
We have the following asymptotics as $R\to\infty$:
\begin{align}
\cos\widetilde{\theta}_R&=\tfrac{1}{2}e^{R+{\rm i}\widetilde{\theta}}+O(e^{-R}),\quad\sin\widetilde{\theta}_R=\tfrac{1}{2{\rm i}}e^{R+{\rm i}\widetilde{\theta}}+O(e^{-R}),\label{eq:3D:asymptotic:bigR:cossin}\\
f(\widetilde{\theta}_R)&=2\cos\tfrac{1}{2}\widetilde{\theta}_R=e^{\frac{1}{2}{\rm i}\widetilde{\theta}_R}+e^{-\frac{1}{2}{\rm i}\widetilde{\theta}_R}=e^{\frac{1}{2}R+\frac{1}{2}{\rm i}\widetilde{\theta}}(1+o(1)).
\label{eq:3D:asymptotic:bigR:f}
\end{align}
From \eqref{eq:3D:asymptotic:bigR:cossin}, we obtain for $R\to\infty$, \begin{align*}
\Re(\cos\widetilde{\theta}_R)=\tfrac{1}{2}e^{R}\cos\widetilde{\theta}+O(e^{-R})\quad \mbox{and}\quad
\Re(\sin\widetilde{\theta}_R)=\tfrac{1}{2}e^{R}\sin\widetilde{\theta}+O(e^{-R}).
\end{align*}
Note that with $\widetilde{\bm\varphi}_{d-2}=(\varphi_1,\dots,\varphi_{d-2})$,
\begin{equation*}
{x_i\cdot\omega(\widetilde{\theta}_R,\widetilde{\bm\varphi}_{d-2})}=(x_i\cdot\mathrm{e}_1)\Re(\cos\widetilde{\theta}_R) + \left[\sum_{j=2}^{d-1}(x_i\cdot\mathrm{e}_j)\left(\prod_{k=1}^{j-2}\sin\varphi_k\right)\cos\varphi_{j-1}+(x_i\cdot\mathrm{e}_d)\prod_{k=1}^{d-2}\sin\varphi_k\right]\Re(\sin\widetilde{\theta}_R).
\end{equation*}
Consequently,
\begin{align}
    |e^{x_i\cdot\omega(\widetilde{\theta}_R,\widetilde{\bm\varphi}_{d-2})}| =e^{\frac{1}{2}e^R(x_i\cdot\widetilde{\omega}) + O(e^{-R})}=e^{\frac{1}{2}e^R(x_i\cdot\widetilde{\omega})}(1+o(1)),\quad\mbox{as }R\to\infty.\label{eq:3D:asymptotic:bigR}
\end{align}
By the asymptotic \eqref{eq:3D:asymptotic:bigR:f} and Lemma \ref{lemma:Bessel:asymptotic} for $\delta<\arg (z)<\pi-\delta$ (with $\delta\in(0,\frac{\pi}{4})$), we obtain
\begin{align}
    |J_{\frac{d}{2}}({\rm i}r_i f(\widetilde{\theta}_R))|&=\left|(2\pi {\rm i}r_i f(\widetilde{\theta}_R))^{-\frac{1}{2}}e^{-{\rm i}({\rm i}r_i f(\widetilde{\theta}_R)-\frac{d+1}{4}\pi)}\right|(1+o(1))\nonumber\\
    &=(2\pi r_i e^{\frac{R}{2}})^{-\frac{1}{2}}e^{r_i e^{\frac{R}{2}}}(1+o(1)),\quad\mbox{as }R\to\infty.\label{eq:3D:asymptotics:bigR:J}
\end{align}
Now we prove $C_i=0$ for all $i$.
We rearrange the pairs $\{(r_i,x_i)\}_{i=1}^N$ so that
\begin{equation}\label{eq:ordering:xdotw}
    x_1\cdot\widetilde{\omega}\le x_2\cdot\widetilde{\omega}\le\cdots\le x_{N-1}\cdot\widetilde{\omega}\le x_N\cdot\widetilde{\omega}.
\end{equation}
Since the set $\{(r_i,x_i)\}_{i=1}^N$ consists of distinct pairs and $\widetilde{\omega}\in{S}$, there holds $r_i\ne r_{i+1}$ if $x_i\cdot\widetilde{\omega}= x_{i+1}\cdot\widetilde{\omega}$.
We rearrange the pairs so that $r_i< r_{i+1}$ whenever $x_i\cdot\widetilde{\omega}= x_{i+1}\cdot\widetilde{\omega}$.
Let
$$F_i(R):=e^{x_i\cdot\omega(\widetilde{\theta}_R,\widetilde{\bm\varphi}_{d-2})}( r_i f(\widetilde{\theta}_R)^{-1})^{\frac{d}{2}} J_{\frac{d}{2}}({\rm i}r_i f(\widetilde{\theta}_R)),\quad \forall i\in[N].$$
Since the identity \eqref{eq:3D:linind:diffrad} holds for all $(\theta,\bm\varphi_{d-1})\in \widetilde{K}\times[0,\pi]^{d-3}\times[0,2\pi]$ and $\widetilde{\theta}_R\in \widetilde{K}$ for all $R>0$, which gives $\sum_{i=1}^N C_iF_i(R)=0$ for all $R>0$.
Also, from the relations \eqref{eq:3D:asymptotic:bigR:f}, \eqref{eq:3D:asymptotic:bigR} and \eqref{eq:3D:asymptotics:bigR:J}, we deduce
$$|F_i(R)|=e^{\tfrac{1}{2}e^R(x_i\cdot\widetilde{\omega})}r_i^{\frac{d}{2}}e^{-\tfrac{d}{4}R}(2\pi r_i e^{\frac{R}{2}})^{-\frac{1}{2}}e^{r_i e^{\frac{R}{2}}}(1+o(1)),\quad\mbox{as }R\to\infty.$$
In view of \eqref{eq:ordering:xdotw}, we have $|F_i(R)|=o(|F_{i+1}(R)|)$ as $R\to\infty$ for all $1\le i<N$.
Thus, we recursively obtain
$$C_{k} = \lim_{R\to\infty}\frac{1}{F_k(R)}\sum_{i=1}^N C_iF_i(R) = 0,$$
for $k=N,\dots,1$.
This completes the proof of the lemma.
\end{proof}

\section{Polygonal inclusions}
\label{section:polygons}

In this section we prove the unique identification of polygonal inclusions and their amplitudes.
With Assumption \ref{ass:alpha:simplices}, we prove the uniqueness in Theorems \ref{theorem:simplex}, \ref{theorem:2D:convex} and \ref{theorem:cubes}.
Without Assumption \ref{ass:alpha:simplices}, we adopt a different constraint in Assumption \ref{ass:3dorhigher:multiplepolygons} and prove the uniqueness in Theorem \ref{theorem:cones:manysides}.

\subsection{Proof of Theorem \ref{theorem:simplex}}
\label{sect:theorem:simplex}
The proof of Theorem \ref{theorem:simplex} requires three technical lemmas. The next lemma allows choosing a suitable range of $\omega$ in Lemma \ref{lemma:uniqueness:main:identity}.
\begin{lemma}\label{lemma:convexpolygon}
Let $P$ be a convex polygon.
Fix a vertex $x_0$ of $P$ arbitrarily.
Then there exists a nonempty relatively open subset $O$ of $\mathbb{S}^{d-1}$ such that
\begin{equation}\label{eq:separatingHyperplane}
x_0\cdot\omega>x\cdot\omega, \quad\forall x\in\overline{P}\backslash\{x_0\},\forall \omega\in O.
\end{equation}
\end{lemma}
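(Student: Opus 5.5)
The plan is to write $\overline{P}$ as the convex hull $\mathcal{C}$ of its vertex set $\{x_i\}_{i=1}^n$, as in Definition \ref{definition:convexpolygon}, and to reduce \eqref{eq:separatingHyperplane} to finitely many strict linear inequalities on the vertices, which are open conditions in $\omega$.

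The first step is a reduction. Let $x_0=x_1$ be the fixed vertex. It suffices to find $\omega$ with $x_1\cdot\omega>x_i\cdot\omega$ for all $i\in\{2,\dots,n\}$. Indeed, any $x\in\overline{P}$ can be written as $x=\sum_i c_ix_i$ with $c_i\ge0$ and $\sum_i c_i=1$. Then
$$x\cdot\omega=\sum_i c_i\,x_i\cdot\omega\le x_1\cdot\omega,$$
with equality only when $c_i=0$ for all $i\ge2$, that is, when $x=x_1$.

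Moreover, the set $O$ of all $\omega\in\mathbb{S}^{d-1}$ satisfying these $n-1$ strict inequalities is an intersection of finitely many open half-spaces with the sphere. It is therefore relatively open, and the only remaining issue is to show that it is nonempty.

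The main step is nonemptiness. By irreducibility of the vertex set, $x_1\notin K:=\operatorname{conv}\{x_2,\dots,x_n\}$. Here $K$ is compact and convex and $x_1\notin K$, so the strict separating hyperplane theorem applies. Concretely, let $y$ be the nearest point of $K$ to $x_1$ and set $\omega=(x_1-y)/|x_1-y|$. The projection inequality $(x_1-y)\cdot(z-y)\le0$ for all $z\in K$ then gives
$$\omega\cdot z\le\omega\cdot y<\omega\cdot x_1,\quad\forall z\in K.$$
In particular this holds for every $x_i$ with $i\ge2$, so $\omega\in O$ and $O\ne\emptyset$. This completes the argument.

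The only point needing care is the claim that irreducibility means $x_1\notin\operatorname{conv}\{x_i\}_{i\ge2}$. If instead $x_1$ were a convex combination of the other vertices, one could drop $x_1$ from \eqref{eq:convex:polytope} without changing $\mathcal{C}$, which contradicts irreducibility. I expect this to be the only mildly subtle step; the rest is the standard projection argument.
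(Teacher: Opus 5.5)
Your proof is correct and follows essentially the same route as the paper: irreducibility gives $x_0\notin\operatorname{conv}\{x_i\}_{i\ge2}$, and strict separation of $x_0$ from this compact convex set produces the desired direction. The paper instead takes $O$ to be a small spherical cap around the separating normal. It leaves implicit both the inequality $x_0\cdot\omega>b$ on that cap and the passage to all of $\overline{P}\setminus\{x_0\}$; you avoid these gaps by taking $O$ to be the full set of $\omega$ satisfying the finitely many strict vertex inequalities, which makes openness automatic, and by handling the passage from vertices explicitly via convex combinations.
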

\begin{proof}
Let $\{z_i\}_{i=1}^n$ be the set of vertices of $P$ with $z_1=x_0$. Then $\{z_i\}_{i=1}^n$ is an irreducible set  satisfying    $$\overline{P}=\left\{\sum_{i=1}^n c_iz_i\,:\,\sum_{i=1}^n c_i=1\mbox{ and }c_i\ge0\,\forall i\in[n]\right\}.$$
    We define a convex and compact set $\widetilde{P}$ as
    $$\widetilde{P}:=\left\{\sum_{i=2}^n c_iz_i\,:\,\sum_{i=2}^n c_i=1\mbox{ and }c_i\ge0\,\forall i\in\{2,\dots,n\}\right\}.$$
By the irreducibility of the set $\{z_i\}_{i=1}^n$, we have $\widetilde {P}\cap\{x_0\}=\emptyset$.
    By the hyperplane separation theorem \cite[Part III, Section 11, Theorem 11.4]{Rockafellar:1970:ConvexAnalysis}, there exists a hyperplane, say $\{x\in\mathbb{R}^d\,:\,x\cdot a = b\}$, that strongly separates the disjoint compact convex sets $\widetilde{P}$ and $\{x_0\}$.
    We may assume $|a|=1$ and $x_0\cdot a>b$. Then $x\cdot a<b$ for all $x\in \widetilde{P}$, by the strong separation.
    Let
    $\varepsilon_0:=\inf\{b-x\cdot a\,:\,x\in \widetilde{P}\}.$ Since $\widetilde{P}$ is compact, we have $\varepsilon_0>0$.
    Fix any $\varepsilon$ satisfying $0<\varepsilon<\varepsilon_0/\max_{x\in \widetilde{P}}|x|$ and set $O:=B_\varepsilon(a)\cap\mathbb{S}^{d-1}$.
    Then, for all $\omega\in O$ and $x\in\widetilde{P}$, there holds
    $$x\cdot\omega-b = x\cdot(\omega-a) + x\cdot a-b \le \varepsilon\max_{x\in \widetilde{P}}|x| - \varepsilon_0 < \varepsilon_0-\varepsilon_0=0.$$
So $O$ satisfies the condition \eqref{eq:separatingHyperplane}.
\end{proof}

The next lemma gives an explicit formula of the integral $\int_{ T_{x_0}(V)}e^{x\cdot y}\d x.$
\begin{lemma}\label{lemma:simplex:integral}
    Let $d\ge2$, $V\in\mathbb{R}^{d\times d}$, $x_0\in\mathbb{R}^d$, $y\in\mathbb{R}^{d}$ and $Y:=V^\top y\in\mathbb{R}^d$. If $\det V>0$, $Y_i\ne0$ and $Y_i\ne Y_j$ for all $i\ne j\in[d]$, then we have
    \begin{equation}\label{eq:lemma:simplex:integral}
        \int_{T_{x_0}(V)}e^{x\cdot y}\,{\rm d}x = e^{x_0\cdot y}|\det V|\left(\sum_{i=1}^d \frac{e^{Y_i}}{Y_i\prod_{j\ne i}(Y_i-Y_j)} + (-1)^d\prod_{i=1}^d\frac{1}{Y_i}\right).
    \end{equation}
\end{lemma}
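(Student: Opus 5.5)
The plan is to reduce to the standard simplex $T$ by an affine change of variables, and then compute the resulting integral over $T$ by a divided-difference (partial fractions) argument.

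\textbf{Step 1: reduction to $T$.} Substitute $x=x_0+V\xi$ with $\xi\in T$, so that ${\rm d}x=|\det V|\,{\rm d}\xi$. Since $x\cdot y=x_0\cdot y+\xi\cdot V^\top y=x_0\cdot y+\xi\cdot Y$, we get
$$\int_{T_{x_0}(V)}e^{x\cdot y}\,{\rm d}x=e^{x_0\cdot y}|\det V|\int_T e^{\xi\cdot Y}\,{\rm d}\xi .$$
It therefore suffices to show that
$$I_d(Y):=\int_T e^{\xi\cdot Y}\,{\rm d}\xi=\sum_{i=1}^d \frac{e^{Y_i}}{Y_i\prod_{j\ne i}(Y_i-Y_j)}+(-1)^d\prod_{i=1}^d\frac1{Y_i}.$$

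\textbf{Step 2: recognise a divided difference.} Set $Y_0:=0$. The right-hand side is exactly the classical divided difference $\exp[Y_0,Y_1,\dots,Y_d]=\sum_{i=0}^d e^{Y_i}/\prod_{j\ne i}(Y_i-Y_j)$. Indeed, the $i=0$ term equals $1/\prod_{j\ge1}(0-Y_j)=(-1)^d\prod_j Y_j^{-1}$. For $i\ge1$, the factor $(Y_i-Y_0)=Y_i$ reproduces the denominators $Y_i\prod_{j\ne i,\,j\ge1}(Y_i-Y_j)$.

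\textbf{Step 3: Hermite--Genocchi by induction.} The Hermite--Genocchi formula states that $\int_T f^{(d)}(Y_0+\xi\cdot(Y-Y_0\mathbf 1))\,{\rm d}\xi=f[Y_0,\dots,Y_d]$, and with $f=\exp$ and $Y_0=0$ this is precisely $I_d(Y)$. To keep the argument self-contained, I would prove it by induction on $d$.
\begin{itemize}
\item \emph{Base case $d=1$.} Here $\int_0^1 e^{Y_1\xi}\,{\rm d}\xi=(e^{Y_1}-1)/Y_1$, which matches the formula.
\item \emph{Inductive step.} Integrate out $\xi_d\in(0,1-s)$ with $s=\sum_{i<d}\xi_i$. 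This gives
$$\int_0^{1-s}e^{Y_d\xi_d}\,{\rm d}\xi_d=\frac{e^{Y_d(1-s)}-1}{Y_d}.$$
Hence
$$I_d(Y)=\frac1{Y_d}\left(e^{Y_d}I_{d-1}\big((Y_i-Y_d)_{i<d}\big)-I_{d-1}\big((Y_i)_{i<d}\big)\right).$$
Substituting the induction hypothesis, which is valid because the nodes $Y_i-Y_d$ are nonzero and distinct, reduces the claim to a partial-fraction identity. Equivalently, this is the recursion $f[Y_0,\dots,Y_d]=\big(f[Y_1,\dots,Y_d]-f[Y_0,\dots,Y_{d-1}]\big)/(Y_d-Y_0)$ after a translation of nodes, using that $\exp[\,\cdot+c\,]=e^{c}\exp[\,\cdot\,]$.
\end{itemize}

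\textbf{Main obstacle.} The only step needing care is the bookkeeping in this partial-fraction identity, namely matching the terms $e^{Y_i}$ for $i<d$, the term $e^{Y_d}$ and the constant term, together with their signs. If the direct algebra becomes messy, I would instead argue as follows. Both sides are entire in $Y$ away from the excluded hyperplanes, so it suffices to verify the identity through Laplace-type generating functions. Alternatively, I would check the divided-difference recursion, which follows from the symmetry of divided differences in their nodes.
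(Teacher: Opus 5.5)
Your argument is correct and has the same skeleton as the paper's: the substitution $x=x_0+V\xi$ reduces everything to $\int_T e^{\xi\cdot Y}\,{\rm d}\xi$, followed by induction on $d$ via Fubini. The inductive step, however, is organized differently.

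The paper peels off the \emph{first} coordinate and rescales the remaining $k$-simplex, which gives $I_{k+1}(Y)=\int_0^1(1-t)^k e^{Y_1t}I_k\bigl((1-t)Y'\bigr)\,{\rm d}t$. Inserting the induction hypothesis and integrating in $t$ produces the coefficients of $e^{Y_j}$ ($j\ge2$) and the constant term at once. The coefficient of $e^{Y_1}$, though, needs the partial-fraction expansion of $1/\bigl(Y_1\prod_{j\ge2}(Y_1-Y_j)\bigr)$ in $Y_1$, a step the paper leaves implicit.

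You instead integrate out the \emph{innermost} coordinate in closed form. This gives the two-term recursion $I_d(Y)=Y_d^{-1}\bigl(e^{Y_d}I_{d-1}((Y_i-Y_d)_{i<d})-I_{d-1}((Y_i)_{i<d})\bigr)$. By translation covariance and symmetry it is exactly $\exp[0,Y_1,\dots,Y_d]=\bigl(\exp[Y_1,\dots,Y_d]-\exp[0,Y_1,\dots,Y_{d-1}]\bigr)/Y_d$.

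For the explicit sum $\sum_i f(x_i)/\prod_{j\ne i}(x_i-x_j)$, this recursion is a one-line coefficient check. The coefficients of $f(x_0)$ and $f(x_d)$ are immediate, and for $0<i<d$ you use $\frac{1}{x_i-x_d}-\frac{1}{x_i-x_0}=\frac{x_d-x_0}{(x_i-x_d)(x_i-x_0)}$. So the bookkeeping you flag as the main obstacle is routine, and your fallback suggestions are not needed.

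What your viewpoint buys is the identification with the Hermite--Genocchi formula. Symmetry in $Y$ and the removability of the singularities on $\{Y_i=0\}\cup\{Y_i=Y_j\}$ become evident. The degenerate cases appear as confluent divided differences. For example, $\int_T e^{\xi\cdot Y}\,{\rm d}\xi=(e^{Y_2}-1-Y_2)/Y_2^2$ for $d=2$, $Y_1=0$, and $(Y_1e^{Y_1}-e^{Y_1}+1)/Y_1^2$ for $d=2$, $Y_1=Y_2$. These differ from the expressions in the remark following the lemma, which blow up as the nodes tend to $0$ instead of tending to $|T|=\tfrac12$.
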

\begin{proof}
By changing variables $\widetilde{x}=V^{-1}(x-x_0)$, we obtain
\begin{equation}\label{eq:simplex:integral:1}
        \int_{T_{x_0}(V)}e^{x\cdot y}\,{\rm d}x = \int_{T}e^{(x_0+V\widetilde{x})\cdot y}|\det V|\,{\rm d}\widetilde{x}=e^{x_0\cdot y}|\det V|\int_{T}e^{V\widetilde{x}\cdot y}\,{\rm d}\widetilde{x}.
    \end{equation}
    Let $Y:=V^\top y$. The integral over the simplex $T$ can be explicitly computed via
    \begin{align*}
        \int_T e^{V\widetilde{x}\cdot y}\,{\rm d}\widetilde{x}=\int_T e^{Y\cdot\widetilde{x}}\,{\rm d}\widetilde{x}=\int_0^1\cdots\int_0^{1-\sum_{i=1}^{d-1}\widetilde{x}_{i}} e^{Y_d\widetilde{x}_d}\,{\rm d}\widetilde{x}_{d}\cdots e^{Y_1\widetilde{x}_{1}}\,{\rm d}\widetilde{x}_{1}.
    \end{align*}
    If $Y_i\ne0$ and $Y_i\ne Y_j$ for all $i\ne j$, by mathematical induction on $d$, we can prove
\begin{equation}\label{eq:simplex:integral:2}
        \int_T e^{Y\cdot\widetilde{x}}\,{\rm d}\widetilde{x}=\sum_{i=1}^d \frac{e^{Y_i}}{Y_i\prod_{j\ne i}(Y_i-Y_j)} + (-1)^d\prod_{i=1}^d\frac{1}{Y_i}.
    \end{equation}
    The induction step from $d=k$ to $d=k+1$ ($k\ge2$) in the proof of \eqref{eq:simplex:integral:2} follows from the relation
    \begin{align*}
        \int_{T(d=k+1)}e^{Y\cdot\widetilde{x}}\,{\rm d}\widetilde{x} &= \int_0^1 \int_{(1-\widetilde{x}_{1})T(d=k)}e^{Y'\cdot x'}\,{\rm d}x'\,e^{Y_1\widetilde{x}_{1}}\,{\rm d}\widetilde{x}_1\\
        &=\int_0^1 \int_{T(d=k)}e^{(1-\widetilde{x}_{1})(Y'\cdot \widetilde{x}')}\,{\rm d}\widetilde{x}'\,(1-\widetilde{x}_{1})^k e^{Y_1\widetilde{x}_{1}}\,{\rm d}\widetilde{x}_1,
    \end{align*}
    where $Y':=(Y_2,Y_3,\dots,Y_{k+1})$, and we denote the simplex \eqref{eq:T:definition} as $T(d=k)$ and $T(d=k+1)$  to indicate the dimension $d$.
Combining \eqref{eq:simplex:integral:1} and \eqref{eq:simplex:integral:2} gives the desired identity \eqref{eq:lemma:simplex:integral}.
\end{proof}

\begin{remark}
The technical assumptions $Y_i\neq Y_j$ and $Y_i\neq 0$ in the lemma are adopted for the derivation of the formula \eqref{eq:lemma:simplex:integral}.
In the cases $Y_i=Y_j$ or $Y_i=0$, we can still derive  explicit formulas.
For example, if $d=2$, $Y_1=0$ and $Y_2\ne0$, we have
$$\int_{T_{x_0}(V)}e^{x\cdot y}\,{\rm d}x = e^{x_0\cdot y}|\det V|\frac{e^{Y_2}-1}{Y_2^2},$$
whereas if $d=2$ and $Y_1=Y_2\ne0$, we have
$$\int_{T_{x_0}(V)}e^{x\cdot y}\,{\rm d}x = e^{x_0\cdot y}|\det V|\frac{1}{Y_1^2}.$$
\end{remark}

The next lemma extends the  formula \eqref{eq:lemma:simplex:integral} in Lemma \ref{lemma:simplex:integral} to complex variables.
\begin{lemma}\label{lemma:simplex:integral:complex}
Let $d\ge2$, $N\in\mathbb{N}$, $V_i\in\mathbb{R}^{d\times d}$ and $x_i\in\mathbb{R}^d$ such that $\det V_i>0$ for all $i\in[N]$.
Let $\{\mathrm{e}_j\}_{j=1}^d$ be an orthonormal basis of $\mathbb{R}^d$ and let $\omega:\mathbb{C}^{d-1}\to\mathbb{C}^d$ be defined  in \eqref{eqn:omega}.
Then, there exists an open and dense subset $\widetilde{S}$ of $\mathbb{S}^{d-1}$ such that for every $(\widetilde{\theta},\widetilde{\bm\varphi}_{d-2})\in\mathbb{R}^{d-1}$ satisfying
$\omega(\widetilde{\theta},\widetilde{\bm\varphi}_{d-2})\in \widetilde{S},$ there exist $\theta_1$ and $\theta_2$ such that $\theta_1<\widetilde{\theta}<\theta_2$, and for all $\theta\in(\theta_1,\theta_2)+{\rm i}\mathbb{R}$ and $i\in[N]$, there holds
\begin{align}
\int_{T_{x_i}(V_i)}e^{x\cdot (\omega+\mathrm{e}_1)}\,{\rm d}x
=
e^{x_i\cdot (\omega+\mathrm{e}_1)}|\det V_i|\left(\sum_{j=1}^d \frac{e^{v_{ij}(\omega)}}{v_{ij}(\omega)\prod_{k\ne j}(v_{ij}(\omega)-v_{ik}(\omega))} + (-1)^d\prod_{j=1}^d\frac{1}{v_{ij}(\omega)}\right),\label{eq:lemma:simplex:integral:complex}
\end{align}
with $\omega=\omega(\theta,\bm\varphi_{d-2})$ and $v_{ij}(\omega):=(V_i^\top (\omega+\mathrm{e}_1))\cdot\mathrm{e}_j$ for all $j\in[d]$.
\end{lemma}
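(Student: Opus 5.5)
The plan is to combine the real-variable formula of Lemma \ref{lemma:simplex:integral} with the holomorphy from Lemma \ref{lemma:exponential:in:integralkernel} and unique continuation. I set $w(\theta):=\omega(\theta,\widetilde{\bm\varphi}_{d-2})+\mathrm{e}_1$, which is entire in $\theta$ with $\bm\varphi_{d-2}$ fixed. Both sides of \eqref{eq:lemma:simplex:integral:complex} are then functions of $\theta$. The left side is entire by Lemma \ref{lemma:exponential:in:integralkernel} with $\beta=\mathds{1}_{T_{x_i}(V_i)}$. The right side is meromorphic in $\theta$, with possible singularities only where some $v_{ij}(w(\theta))=0$ or $v_{ij}(w(\theta))=v_{ik}(w(\theta))$.

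\textbf{Choice of $\widetilde S$.} Note that $v_{ij}(\omega)=(\omega+\mathrm{e}_1)\cdot V_i\mathrm{e}_j$ and $v_{ij}-v_{ik}=(\omega+\mathrm{e}_1)\cdot V_i(\mathrm{e}_j-\mathrm{e}_k)$. Since $\det V_i>0$, the vectors $a=V_i\mathrm{e}_j$ and $a=V_i(\mathrm{e}_j-\mathrm{e}_k)$ are all nonzero. I define
\[
\widetilde S:=\{\omega\in\mathbb S^{d-1}:\ \omega\cdot V_i\mathrm{e}_j\neq-\mathrm{e}_1\cdot V_i\mathrm{e}_j,\ \omega\cdot V_i(\mathrm{e}_j-\mathrm{e}_k)\neq-\mathrm{e}_1\cdot V_i(\mathrm{e}_j-\mathrm{e}_k)\ \ \forall i,j\neq k\}.
\]
This is a finite intersection of conditions of the form in Lemma \ref{lemma:sphere:and:hyperplane}, so $\widetilde S$ is open and dense. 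If $\omega(\widetilde\theta,\widetilde{\bm\varphi}_{d-2})\in\widetilde S$, then by continuity all $v_{ij}$ and all differences are nonzero real numbers for real $\theta$ in some interval $(\theta_1,\theta_2)\ni\widetilde\theta$. On that interval Lemma \ref{lemma:simplex:integral} (with $y=w(\theta)$, $Y=V_i^\top y$) gives \eqref{eq:lemma:simplex:integral:complex}.

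\textbf{Extension to the strip.} This is the main obstacle: the denominators must not vanish on the whole vertical strip $(\theta_1,\theta_2)+{\rm i}\mathbb R$. I write $\omega(\theta)=\cos\theta\,\mathrm{e}_1+\sin\theta\,\eta$, where $\eta$ is the fixed real unit vector built from $\widetilde{\bm\varphi}_{d-2}$ and orthogonal to $\mathrm{e}_1$. Then each denominator factor has the form
\[
h(\theta)=A\cos\theta+B\sin\theta+C=\rho\cos(\theta-\phi)+C,
\]
with real $A,B,C$. Writing $\theta=s+{\rm i}t$, we have $\Im\cos(\theta-\phi)=-\sin(s-\phi)\sinh t$. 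Hence a zero of $h$ with $t\neq0$ forces $\sin(s-\phi)=0$, i.e. $s\in\phi+\pi\mathbb Z$. A zero with $t=0$ is a real zero, and these are excluded near $\widetilde\theta$.

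So I shrink $(\theta_1,\theta_2)$ further to avoid the finitely many points $\phi+\pi\mathbb Z$ coming from all the factors. I have to be careful when $\widetilde\theta$ itself is such a point. In that case $h(\widetilde\theta+{\rm i}t)=\pm\rho\cosh t+C$, which has zeros when $|C|\ge\rho$. Then I restrict to one side, or, preferably, enlarge the exceptional set so that $\widetilde S$ also excludes these finitely many directions $\omega(\phi)$. That is again a finite union of lower-dimensional sets (each condition $\sin(\theta-\phi)=0$ is $\omega\cdot(\text{real vector})=\text{const}$), so $\widetilde S$ stays open and dense.

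With this choice, both sides are holomorphic on the open strip $(\theta_1,\theta_2)+{\rm i}\mathbb R$, which is connected, and they agree on the real segment $(\theta_1,\theta_2)$. The identity theorem then gives \eqref{eq:lemma:simplex:integral:complex} on the whole strip, for each $i\in[N]$.
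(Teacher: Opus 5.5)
Your proof is correct and is essentially the paper's. Your condition $\sin(s-\phi)\neq0$ is exactly the paper's set $S_2$, namely $(V_i^\top h(\omega(s)))\cdot\mathrm{e}_j\neq0$ and its analogue for differences, with $h(\omega(\theta,\cdot))=\omega(\theta+\frac{\pi}{2},\cdot)=\partial_\theta\omega$. The remaining steps (continuity on a real interval, Lemma~\ref{lemma:simplex:integral}, identity theorem on the strip) also coincide with the paper's.

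Two small remarks. First, for $d\ge3$ the extra set you exclude is not a hyperplane section as your parenthetical claims. It lies in the quadric $(\omega\cdot\mathrm{e}_1)(\omega\cdot a)=\mathrm{e}_1\cdot a$, which is still closed and nowhere dense in $\mathbb{S}^{d-1}$, so your conclusion stands. Second, no enlargement of $\widetilde S$ is actually needed. Every factor is $(\omega(\theta)+\mathrm{e}_1)\cdot a=A(1+\cos\theta)+B\sin\theta$, so in your notation $C=A$ and $|C|\le\rho$. Hence $\rho\cos(\theta-\phi)+C$ has only real zeros, and the first version of your $\widetilde S$ already suffices.
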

\begin{proof}
Let $h:\mathbb{S}^{d-1}\to\mathbb{S}^{d-1}$,  $h(\omega(\theta,\bm\varphi_{d-2}))=\omega(\theta+\tfrac{\pi}{2},\bm\varphi_{d-2})$ for all $(\theta,\bm \varphi_{d-2})\in\mathbb{R}^{d-1}$. Let
\begin{equation}\label{eq:S:tilde}
\begin{aligned}
    S_1&:=\{\omega\in\mathbb{S}^{d-1}\,:\,v_{ij}(\omega)\ne0\mbox{ and }v_{ij}(\omega)-v_{ik}(\omega)\ne0,\quad\forall i\in[N],\,\forall j\ne k\in[d]\},\\
    S_2&:=\{\omega\in\mathbb{S}^{d-1}\,:\,(V_i^\top h(\omega))\cdot\mathrm{e}_j\ne0\mbox{ and }(V_i^\top h(\omega))\cdot(\mathrm{e}_j-\mathrm{e}_k)\ne0,\quad\forall i\in[N]\,\forall j\ne k\in[d]\}.
\end{aligned}
\end{equation}
Let $\widetilde{S}=S_1\cap S_2$.
Since $\det V_i\ne0$ for all $i\in[N]$, by Lemma \ref{lemma:sphere:and:hyperplane}, $\widetilde{S}$ is open and dense in $\mathbb{S}^{d-1}$.
Fix any tuple $(\widetilde{\theta},\widetilde{\bm\varphi}_{d-2})\in\mathbb{R}^{d-1}$ such that $\widetilde{\omega}:=\omega(\widetilde{\theta},\widetilde{\bm\varphi}_{d-2})\in \widetilde{S}.$
Since $\widetilde{S}$ is open, there exists an open neighborhood $\mathcal{N}$ of $\widetilde{\omega}$ such that $\mathcal{N}\subset\widetilde{S}$.
Fix any $\theta_1$ and $\theta_2$ that satisfy $\theta_1<\widetilde{\theta}<\theta_2$ and
$$\omega({\theta},\widetilde{\bm\varphi}_{d-2})\in\mathcal{N},\quad\forall\theta\in(\theta_1,\theta_2).$$
We claim that the denominators in \eqref{eq:lemma:simplex:integral:complex} with $\omega=\omega(\theta-{\rm i}R,\widetilde{\bm\varphi}_{d-2})$ for any $\theta\in(\theta_1,\theta_2)$ and $R\in\mathbb{R}$ do not vanish. Fix $\theta\in(\theta_1,\theta_2)$ and $R\in\mathbb{R}$ arbitrarily. If $R=0$, the assertion follows from $\omega(\theta,\widetilde{\bm\varphi}_{d-2})\in\mathcal{N}\subset S_1$.
Suppose $R\ne0$.
From the identities
$\cos(\theta-{\rm i}R)=\cosh R\cos\theta - {\rm i}\sinh R\sin\theta$ and $\sin(\theta-{\rm i}R)=\cosh R\sin\theta + {\rm i}\sinh R\cos\theta,$
we obtain
\begin{align*}
    \Im(\omega(\theta-{\rm i}R,\widetilde{\bm\varphi}_{d-2})) &= (\sinh R) h(\omega(\theta,\widetilde{\bm\varphi}_{d-2})).
\end{align*}
Let
\begin{equation} \label{eqn:tilde-omega}
\widetilde{\omega}_R:=\omega(\theta-{\rm i}R,\widetilde{\bm\varphi}_{d-2}).
\end{equation}
If $v_{ij}(\widetilde{\omega}_R)=0$ for some $i$ and $j$, then
$$(V_i^\top h(\widetilde{\omega}_0))\cdot\mathrm{e}_j = (\sinh R)^{-1}\Im(v_{ij}(\widetilde{\omega}_R)) =0,$$
which contradicts the choice $\widetilde{\omega}_0\in S_2$.
Similarly, if $v_{ij}(\widetilde{\omega}_R)-v_{ik}(\widetilde{\omega}_R)=0$ for some $i$, $j$ and $k$, then
$$(V_i^\top h(\widetilde{\omega}_0))\cdot(\mathrm{e}_j-\mathrm{e}_k) = (\sinh R)^{-1}\Im(v_{ij}(\widetilde{\omega}_R)-v_{ik}(\widetilde{\omega}_R)) =0,$$
which again contradicts the choice $\widetilde{\omega}_0\in S_2$.
Thus the claim holds for all $R\in\mathbb{R}$.
In sum, the right-hand side of \eqref{eq:lemma:simplex:integral:complex} with $\bm\varphi_{d-2}=\widetilde{\bm\varphi}_{d-2}$ is a fraction of holomorphic functions with non-vanishing denominators, and so it is holomorphic in the strip $\theta\in(\theta_1,\theta_2)+{\rm i}\mathbb{R}$.
By Lemma \ref{lemma:exponential:in:integralkernel}, $\int_{T_{x_i}(V_i)}e^{x\cdot (\omega+\mathrm{e}_1)}\,{\rm d}x$ is also holomorphic in $\theta\in\mathbb{C}$.
By Lemma \ref{lemma:simplex:integral}, the identity \eqref{eq:lemma:simplex:integral:complex} holds for all $(\theta,\bm \varphi_{d-2})\in\mathbb{R}^{d-1}$.
Thus by the unique continuation property of holomorphic functions, \eqref{eq:lemma:simplex:integral:complex} holds for all $\theta\in(\theta_1,\theta_2)+{\rm i}\mathbb{R}$ and $\bm \varphi_{d-2}=\widetilde{\bm\varphi}_{d-2}$.
\end{proof}

\begin{lemma}\label{lemma:simplex:linind}
    Fix any $d\ge2$ and $\omega_0\in\mathbb{S}^{d-1}$.
    Let $\{V_i,x_i\}_{i=1}^N\subset\mathbb{R}^{d\times d}\times\mathbb{R}^d$ be a finite set satisfying $T_{x_i}(V_i)\cap T_{x_j}(V_j)=\emptyset$ whenever $i\ne j$.
Suppose that $\{C_i\}_{i=1}^N\subset \mathbb{R}$ satisfies
\begin{equation}\label{eq:linindep:simplices}
    \sum_{i=1}^N C_i \int_{T_{x_i}(V_i)}e^{x\cdot(\omega+\omega_0)}\, {\rm d}x = 0,\quad\forall \omega\in\mathbb{S}^{d-1}.
\end{equation}
Let $P$ be the convex hull of $\bigcup_{i=1}^N T_{x_i}(V_i)$.
If there exists a vertex of $T_{x_1}(V_1)$ that is also a vertex of $P$ but  not a vertex of any simplex in $\{T_{x_i}(V_i)\}_{i=2}^N$, then we have $C_1=0$.
\end{lemma}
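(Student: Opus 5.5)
Let $z_0$ be the distinguished vertex: a vertex of $T_{x_1}(V_1)$ that is also a vertex of $P$ and is not a vertex of any $T_{x_i}(V_i)$ with $i\ge2$. The plan mirrors the proof of Lemma \ref{lemma:main:linindep:general}. We extend \eqref{eq:linindep:simplices} holomorphically in $\theta$ and insert the explicit formula of Lemma \ref{lemma:simplex:integral:complex}. We then read off the growth of each exponential term along a half-line $\theta=\widetilde\theta-{\rm i}R$, $R\to\infty$, and show that the exponential $e^{z_0\cdot(\omega+\omega_0)}$ appears only with coefficient proportional to $C_1$ and strictly dominates all other terms.

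\textbf{Choice of direction.} First, by relabelling, we may take the vertex of $T_{x_1}(V_1)$ equal to $z_0$ to be written as $x_1$ (re-parametrise $V_1$ so that $z_0$ is the base point; any vertex can serve, after possibly reordering columns and flipping a sign, which only changes the ordering in Lemma \ref{lemma:simplex:integral}). By Lemma \ref{lemma:convexpolygon} there is a relatively open set $O\subset\mathbb{S}^{d-1}$ with $z_0\cdot\omega>x\cdot\omega$ for all $x\in\overline P\setminus\{z_0\}$ and $\omega\in O$. Intersect $O$ with the open dense set $\widetilde S$ of Lemma \ref{lemma:simplex:integral:complex}, built with $\mathrm e_1=\omega_0$ and the remaining $\mathrm e_j$ completing an orthonormal basis. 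Also intersect with the open dense set from Lemma \ref{lemma:sphere:and:hyperplane} on which distinct vertices of all simplices have distinct projections. Pick $\widetilde\omega=\omega(\widetilde\theta,\widetilde{\bm\varphi}_{d-2})$ in this nonempty intersection. By Remark \ref{remark:extended:identity} (holomorphy from Lemma \ref{lemma:exponential:in:integralkernel}), \eqref{eq:linindep:simplices} holds for all complex $\theta$. Lemma \ref{lemma:simplex:integral:complex} then expresses it on the line $\theta=\widetilde\theta-{\rm i}R$. Each term is $e^{w\cdot(\omega+\mathrm e_1)}$ times a rational function of $\omega$, where $w$ ranges over the vertices $x_i$ and $x_i+V_i\mathrm e_j$, since $e^{x_i\cdot(\omega+\mathrm e_1)}e^{v_{ij}}=e^{(x_i+V_i\mathrm e_j)\cdot(\omega+\mathrm e_1)}$.

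\textbf{Asymptotics.} As in \eqref{eq:3D:asymptotic:bigR:cossin}--\eqref{eq:3D:asymptotic:bigR}, along this line we have $|e^{w\cdot\omega}|=e^{\frac12 e^R(w\cdot\widetilde\omega)}(1+o(1))$. Meanwhile $\omega\approx \frac12 e^{R+{\rm i}\widetilde\theta}(\mathrm e_1-{\rm i}\,h\text{-type vector})$, so the rational prefactors grow at most polynomially in $e^R$. Because $S_2$ in \eqref{eq:S:tilde} holds, the leading coefficients of the denominators $v_{ij}$ and $v_{ij}-v_{ik}$ are nonzero, hence these prefactors are bounded below by a negative power of $e^R$. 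Group the terms by vertex. Since $z_0$ is a vertex of $T_{x_1}(V_1)$ only, the sole term carrying $e^{z_0\cdot(\omega+\mathrm e_1)}$ is $C_1 e^{z_0\cdot(\omega+\mathrm e_1)}|\det V_1|(-1)^d\prod_j v_{1j}(\omega)^{-1}$, whose prefactor is nonzero. Every other vertex $w\neq z_0$ lies in $\overline P\setminus\{z_0\}$, so $w\cdot\widetilde\omega<z_0\cdot\widetilde\omega$. Hence its contribution is $O\big(e^{\frac12e^R(w\cdot\widetilde\omega)}\mathrm{poly}(e^R)\big)=o\big(e^{\frac12 e^R(z_0\cdot\widetilde\omega)}e^{-mR}\big)$ for every $m$. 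Dividing the identity by the $z_0$-term and letting $R\to\infty$ gives $C_1=0$.

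\textbf{Main obstacle.} The delicate step is controlling the rational prefactors. We need the $z_0$ coefficient not to decay faster than polynomially, and the other prefactors not to blow up. This requires denominators $v_{ij}(\omega)$ and $v_{ij}-v_{ik}$ that do not vanish along the whole line and grow exactly like $e^R$. I would obtain this from the $S_1,S_2$ genericity: the imaginary part equals $\sinh R$ times a nonzero quantity by Lemma \ref{lemma:simplex:integral:complex}. The base-point relabelling at $z_0$ also needs care; if it is awkward, I would instead use translation and the symmetric form of \eqref{eq:simplex:integral:2}, in which every vertex contributes one exponential term $e^{w\cdot y}/\prod(\cdots)$.
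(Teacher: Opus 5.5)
Your proposal is correct and follows essentially the same route as the paper. You pick $\widetilde\omega\in O\cap\widetilde S$ using Lemma \ref{lemma:convexpolygon} and Lemma \ref{lemma:simplex:integral:complex}, extend \eqref{eq:linindep:simplices} holomorphically, insert the explicit formula along $\theta=\widetilde\theta-{\rm i}R$, and let the exponential attached to the distinguished vertex dominate, with $S_2$ ensuring every denominator behaves like $\tfrac12 e^R$ times a nonzero constant. Your per-vertex grouping is just more explicit bookkeeping of the paper's claim that $|I_i(\widetilde\omega_R)|$ grows like $e^{\frac12 e^R\max_{\text{vertices}}x\cdot\widetilde\omega-dR}$, and the base-point relabelling is not needed, since every vertex term of $I_1$ already has a nonvanishing prefactor of order $e^{-dR}$.
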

\begin{proof}
The proof relies on Lemma \ref{lemma:simplex:integral:complex}. Set $\hat{\mathrm{e}}_1:=\omega_0$ and choose any $\hat{\mathrm{e}}_2,\dots,\hat{\mathrm{e}}_{d}\in\mathbb{S}^{d-1}$ so that $\{\hat{\mathrm{e}}_i\}_{i=1}^{d}$ is an orthonormal basis of $\mathbb{R}^d$.
Set $\omega:\mathbb{C}^{d-1}\to\mathbb{C}^d$ as \eqref{eqn:omega} with $\mathrm{e}_i=\hat{\mathrm{e}}_i$ for all $i\in[N]$.
Define $v_{ij}(\omega)$ with $\omega\in\mathbb{C}^{d-2}$ as in Lemma \ref{lemma:simplex:integral:complex} and set
\begin{equation}\label{eq:I:definition}
    I_i(\omega):=e^{x_i\cdot (\omega+\hat{\mathrm{e}}_1)}|\det V_i|\left(\sum_{j=1}^d \frac{e^{v_{ij}(\omega)}}{v_{ij}(\omega)\prod_{k\ne j}(v_{ij}(\omega)-v_{ik}(\omega))} + (-1)^d\prod_{j=1}^d\frac{1}{v_{ij}(\omega)}\right),
\end{equation}
which is well-defined whenever $\omega\in\mathbb{C}^d$ and the denominators of \eqref{eq:I:definition} do not vanish.
By Lemma \ref{lemma:simplex:integral:complex}, there exists an open and dense set $\widetilde{S}$ of $\mathbb S^{d-1}$ such that for every $(\widetilde{\theta},\widetilde{\bm\varphi}_{d-2})\in\mathbb{R}^{d-1}$ satisfying $\omega(\widetilde{\theta},\widetilde{\bm\varphi}_{d-2})\in\widetilde{S}$, there exist $\theta_1$ and $\theta_2$ such that $\theta_1<\widetilde{\theta}<\theta_2$ and
\begin{equation}\label{eq:explicit:simplices:integral}
    \int_{T_{x_i}(V_i)} e^{ x\cdot (\omega(\theta,\widetilde{\bm\varphi}_{d-2})+\hat{\mathrm{e}}_1)}\,{\rm d}x = I_i(\omega(\theta,\widetilde{\bm\varphi}_{d-2})),\quad\forall i\in[N],\quad \forall \theta\in (\theta_1,\theta_2)+{\rm i}\mathbb{R}.
\end{equation}
Since $\omega$ maps $\mathbb{R}^{d-1}$ to $\mathbb{S}^{d-1}$, \eqref{eq:linindep:simplices} implies
\begin{equation}\label{eq:linind:simplices:coord}
   {\rm I}(\theta,\bm\varphi_{d-2}):= \sum_{i=1}^N C_i \int_{T_{x_i}(V_i)} e^{ x\cdot (\omega(\theta,\bm\varphi_{d-2})+\hat{\mathrm{e}}_1)}\,{\rm d}x=0,\quad\forall (\theta,\bm\varphi_{d-2})\in\mathbb{R}^{d-1}.
\end{equation}
By Lemma \ref{lemma:exponential:in:integralkernel}, ${\rm I}(\theta,\bm\varphi_{d-2})$ is holomorphic in $\theta\in\mathbb{C}$ for any fixed $\bm\varphi_{d-2}\in\mathbb{R}^{d-2}$.
By the unique continuation property of holomorphic functions, \eqref{eq:linind:simplices:coord} holds for all $(\theta,\bm\varphi_{d-2})\in\mathbb{C}\times\mathbb{R}^{d-2}$.
This and \eqref{eq:explicit:simplices:integral} imply
\begin{equation}\label{eq:linind:simplices:coord:ext}
    \sum_{i=1}^N C_i I_i(\omega(\theta,\widetilde{\bm\varphi}_{d-2}))=0,\quad\forall \theta\in (\theta_1,\theta_2)+{\rm i}\mathbb{R}.
\end{equation}
Fix $(\widetilde{\theta},\widetilde{\bm\varphi}_{d-2})\in\mathbb{R}^{d-1}$.
Now we derive the growth rate of the terms in the identity \eqref{eq:linind:simplices:coord:ext} along the half-line $\{\widetilde{\theta}-{\rm i}R\,:\,R\ge0\}$. Let $\widetilde{\omega}_R:=\omega(\widetilde{\theta}-{\rm i}R,\widetilde{\bm\varphi}_{d-2})$ for all $R\ge 0$ and $\widetilde{\omega}:=\widetilde{\omega}_0$.
The estimates \eqref{eq:3D:asymptotic:bigR:cossin} and \eqref{eq:3D:asymptotic:bigR}, for every $x\in\mathbb{R}^d$, imply
\begin{align*}
|x\cdot(\widetilde{\omega}_R+\hat{\mathrm{e}}_1)|&=\tfrac{1}{2}e^R|x\cdot(\widetilde{\omega} + {\rm i}h(\widetilde{\omega}))|(1+o(1))\quad\mbox{as }R\to\infty,\\
\big|e^{x\cdot(\widetilde{\omega}_R+\hat{\mathrm{e}}_1)}\big|&=e^{\frac{1}{2}e^R(x\cdot\widetilde{\omega})}(1+o(1))\quad \mbox{as }R\to \infty,
\end{align*}
with $h(\widetilde{\omega})=\omega(\widetilde{\theta}+\frac{\pi}{2},\widetilde{\bm\varphi}_d)$.
By applying these asymptotic relations to $I_i(\omega)$ in \eqref{eq:I:definition} with $\omega=\widetilde{\omega}_R$, we obtain the following estimate for every $i\in[N]$:
\begin{align*}
    &|I_i(\widetilde{\omega}_R)|=e^{\frac{1}{2}e^R(x_i\cdot\widetilde{\omega})}|\det V_i| \left(\tfrac{1}{2}e^R\right)^{-d} J_i(R) (1+o(1))\quad\mbox{as }R\to\infty,
\end{align*}
with
\begin{align*}
J_i(R)&=
    \prod_{j=1}^d\left|(V_i^\top(\widetilde{\omega}+{\rm i}h(\widetilde{\omega})))_j\right|^{-1}\quad\mbox{if }(V_i^\top\widetilde{\omega})_j<0,\quad \forall j\in[d],
\end{align*}
whereas if $m_i\in[d]$ maximizes $(V_i^\top\widetilde{\omega})_{m_i}$ and satisfies $(V_i^\top\widetilde{\omega})_{m_i}>0$, then
\begin{align*}
    J_i(R)&=\left[e^{\frac{1}{2}e^R(V_i^\top \widetilde{\omega})_{m_i}}\right]\left|(V_i^\top(\widetilde{\omega}+{\rm i}h(\widetilde{\omega})))_{m_i}\right|^{-1}\prod_{k\ne {m_i}}\left|(V_i^\top(\widetilde{\omega}+{\rm i}h(\widetilde{\omega})))_{m_i}-(V_i^\top(\widetilde{\omega}+{\rm i}h(\widetilde{\omega})))_k\right|^{-1}.
\end{align*}
Let $x_0$ be a vertex of $T_{x_1}(V_1)$ that is also a vertex of $P$ but not a vertex of any simplex in $\{T_{x_i}(V_i)\}_{i=2}^N$.
Since $P$ is a convex hull of $\{T_{x_i}(V_i)\}_{i=1}^N$ and $\widetilde{S}$ is dense in $\mathbb{S}^{d-1}$, by Lemma \ref{lemma:convexpolygon}, there exists an $\widetilde{\omega}\in\widetilde{S}$ satisfying
$$x_0\cdot \widetilde{\omega} > \max\{x\cdot\widetilde{\omega}\,:\, x\ne x_0\mbox{ and }x\mbox{ is a vertex of }T_{x_i}(V_i)\mbox{ for some }i\in[N]\}.$$
For any such $\widetilde{\omega}$, there exists a tuple $(\widetilde{\theta},\widetilde{\bm\varphi}_{d-2})\in\mathbb{R}^{d-1}$ satisfying $\widetilde{\omega}=\omega(\widetilde{\theta},\widetilde{\bm\varphi}_{d-2})$, since $\omega$ maps $\mathbb{R}^{d-1}$ onto $\mathbb{S}^{d-1}$.
Note that the growth rate for $|I_i(\widetilde{\omega}_R)|$ as $R\to\infty$ is $$e^{\frac{1}{2}e^R\max\{x\cdot\widetilde{\omega}\,:\, x\mbox{ is a vertex of }T_{x_i}(V_i)\}-dR}.$$
Thus $|I_i(\widetilde{\omega}_R)|=o(|I_1(\widetilde{\omega}_R)|)$ for all $i\in\{2,\dots,N\}$ as $R\to\infty$. Therefore, we have
$$C_1=\lim_{R\to\infty}\frac{1}{I_1(\widetilde{\omega}_R)}\sum_{i=1}^N C_i I_i(\widetilde{\omega}_R) = 0,$$
which is the desired conclusion.
\end{proof}

\begin{proof}[Proof of Theorem \ref{theorem:simplex}]
By Lemma \ref{lemma:uniqueness:main:identity}, we have
\begin{equation}\label{eq:simplices:basic:identity}
\int_\Om (\alpha^1(x)-\alpha^2(x)) e^{x\cdot(\omega+\omega_0)}\,{\rm d} x =0,\quad\forall\omega\in\mathbb{S}^{d-1}.
\end{equation}
We prove $\alpha^1\equiv\alpha^2$ by contradiction. Suppose $\alpha^1\not\equiv\alpha^2$.
Let $\{T_{x_i}(V_i)\}_{i=1}^N$ be the set satisfying all the conditions in Assumption \ref{ass:alpha:simplices} for $\alpha^1$ and $\alpha^2$.
Then, $\{T_{x_i}(V_i)\}_{i=1}^N$ is an irreducible set of disjoint simplices such that there exists $\{\alpha_i\}_{i=1}^N$ satisfying
\begin{equation}\label{eq:simplices:piecewiseconstant}
    \alpha^1(x)-\alpha^2(x)=\sum_{i=1}^N\alpha_i\mathds{1}_{T_{x_i}(V_i)}(x),\quad\forall x\in \Om.
\end{equation}
The irreducibility implies that $\alpha_i\ne0$ for all $i\in[N]$.
Combining \eqref{eq:simplices:basic:identity} and \eqref{eq:simplices:piecewiseconstant} gives
$$\sum_{i=1}^{N}\alpha_i\int_{T_{x_i}(V_i)} e^{x\cdot(\omega+\omega_0)}\,{\rm d}x = 0,\quad\forall\omega\in\mathbb{S}^{d-1}.$$
Let $P$ be the convex hull of $\bigcup_{i=1}^N T_{x_i}(V_i)$.
From Assumption \ref{ass:alpha:simplices}, up to rearranging indices for $\{(\alpha_i,T_{x_i}(V_i))\}_{i=1}^N$, there exists a vertex of $T_{x_1}(V_1)$ that is also a vertex of $P$ but is not a vertex of any simplex in $\{T_{x_i}(V_i)\}_{i=2}^N$.
By Lemma \ref{lemma:simplex:linind}, $\alpha_1=0$,  which contradicts the condition $\alpha_i\ne0$ for all $i\in[N]$. Thus we conclude $\alpha^1\equiv\alpha^2$.
\end{proof}

\subsection{Proof of Theorem \ref{theorem:2D:convex}}
\label{subsect:2D:convex}
In this part, using  Theorem \ref{theorem:simplex} and the following two lemmas, we prove Theorem \ref{theorem:2D:convex}.
\begin{lemma}\label{lemma:2D:simplepoly:onesimplex}
Let $P$ be a union of polygonal open sets in $\mathbb{R}^2$.
Let $x_0$ be a vertex of $P$.
Suppose that there exists an $\varepsilon>0$ such that $B_\varepsilon(x_0)\cap P$ is equal to a connected circular sector with center $x_0$.
Then, there exists a set $\{T_{x_i}(V_i)\}_{i=1}^N$ of disjoint simplices $($triangles$)$ such that
\begin{equation}\label{eq:2D:simplex:decomp}
    \bigcup_{i=1}^N \overline{T_{x_i}(V_i)}=\overline{P},\quad x_0\in\overline{T_{x_1}(V_1)}\quad\mbox{and}\quad x_0\not\in\bigcup_{i=2}^N\overline{T_{x_i}(V_i)}.
\end{equation}
\end{lemma}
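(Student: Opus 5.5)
The plan is to triangulate $\overline{P}$ in two steps: first cut off one triangle at $x_0$, then triangulate what remains by a standard polygon triangulation that never uses $x_0$. Here $P$ is a finite union of polygonal open sets, so $\overline{P}$ is a finite union of closed polygons. By hypothesis, $B_\varepsilon(x_0)\cap P$ is a single circular sector with center $x_0$ bounded by two rays from $x_0$. Because $x_0$ is a vertex, the opening angle $\gamma$ of this sector lies in $(0,2\pi)\setminus\{\pi\}$. The two rays lie along edges $[x_0,a]$ and $[x_0,b]$ of $\partial P$.

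\textbf{Step 1.} If $\gamma<\pi$, pick points $a'\in(x_0,a)$ and $b'\in(x_0,b)$ close to $x_0$, say with $|a'-x_0|=|b'-x_0|=\delta<\varepsilon$. The closed triangle $\overline{T_1}$ with vertices $x_0,a',b'$ lies in $\overline{B_\delta(x_0)}\cap\overline{\text{sector}}\subset\overline P$, and its interior is contained in $P$. Write $\overline T_1=\overline{T_{x_0}(V_1)}$, with the columns of $V_1$ ordered so that $\det V_1>0$.

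If $\gamma>\pi$ (a reflex vertex), I would split the sector by a ray from $x_0$. Rather than using several triangles at $x_0$, which the statement forbids, I would take $a'$ on one edge and choose $b'$ on the bisecting ray so that the angle of the triangle at $x_0$ is less than $\pi$. This triangle occupies only part of the sector near $x_0$. The leftover part of the sector near $x_0$ then touches $x_0$ only along the segment $[x_0,b']$, and that is a problem. Since $\overline{P}$ near $x_0$ needs to be covered with angle $\gamma>\pi$, and a single triangle has angle less than $\pi$, a second closed triangle must contain $x_0$. Hence the requirement $x_0\notin\bigcup_{i\ge2}\overline{T_i}$ forces $\gamma<\pi$. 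I would therefore note that this requirement implicitly means the vertex is convex. This holds in the intended applications, namely vertices of the convex hull, where Lemma~\ref{lemma:convexpolygon} applies. I would add that to the proof, or treat only the case $\gamma<\pi$.

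\textbf{Step 2.} Set $Q:=\overline{P}\setminus T_1^{\circ}$, with the open edge through $a',b'$ handled via closures. Since $\delta$ is small, $Q$ does not contain $x_0$: near $x_0$, the set $\overline P$ is exactly the closed sector, and the triangle covers the sector within distance about $\delta\cos(\gamma/2)$ of $x_0$. Also, $Q$ is again a finite union of closed polygons whose boundary is made of finitely many segments. I would triangulate $Q$ using the classical fact that every closed polygonal region, with holes allowed, admits a triangulation whose vertices are the boundary vertices. One way is to add all segments through vertices, forming an arrangement of lines, which gives convex cells, and then fan-triangulate each convex cell. None of these triangles contains $x_0$, since they lie in $Q$ and $x_0\notin Q$. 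Finally, take interiors of the triangles to obtain disjoint open simplices $T_{x_i}(V_i)$, $i\ge2$, each with $\det V_i>0$ after ordering the vertices, so that the union of closures equals $\overline P$. This gives \eqref{eq:2D:simplex:decomp}.

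The main obstacle is Step 2: making rigorous that $Q$ is still polygonal and admits a triangulation, in particular for non-simply connected or disconnected unions. My first approach would be the line-arrangement argument, which avoids ear-clipping entirely: extend every edge of $\partial P$ and the segment $[a',b']$ to lines, which gives finitely many convex polygonal cells. Each cell lies either inside $Q$ or outside it, and each convex cell can be triangulated by fanning from one of its vertices.
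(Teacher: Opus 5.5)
Your proof is correct and is essentially the paper's argument. The paper also cuts off the triangle spanned by $x_0$ and two points on the bounding edges (it takes $y_0,z_0$, the endpoints of the arc of the sector) and then triangulates the remainder, so your construction matches it. Your observation that this requires the sector angle at $x_0$ to be less than $\pi$ is accurate. For a reflex sector the conclusion fails, and the paper's triangle $x_0y_0z_0$ does not even lie in $P$. However, only convex vertices (of the convex polygons $D^1$ or $D^2$) are used in Lemma~\ref{lemma:2D:convex}, so the restriction is harmless there.

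One small repair is needed: define $Q$ as the closure of $\overline{P}\setminus\overline{T_1}$ rather than $\overline{P}\setminus T_1^{\circ}$. The latter still contains $x_0$ together with the sides $[x_0,a']$ and $[x_0,b']$.
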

\begin{proof}
    Let $y_0$ and $z_0$ be the end points of the arc of the circular sector $B_\varepsilon(x_0)\cap P$.
    We decompose $P$ into a family of disjoint simplices, denoted by $\{T_{x_i}(V_i)\}_{i=1}^N$, that includes the triangle $T_{x_1}(V_1)$ defined by the set of vertices being $\{x_0,y_0,z_0\}$.
    Then the desired relation \eqref{eq:2D:simplex:decomp}  follows.
\end{proof}
\begin{remark}
Lemma \ref{lemma:2D:simplepoly:onesimplex} holds for polygons in $\mathbb{R}^2$, but it is false in $\mathbb{R}^d$ for $d\ge3$. One counterexample for polygons in $\mathbb{R}^d$ with $d\ge3$ is given by
    $\{(y_1,y_2,\dots,y_d)\in\mathbb{R}^d\,:\,|y_1|+|y_2|+\cdots+|y_d|<1\},$
    which is a polygon such that every vertex has $2d-2$ edges.
\end{remark}
\begin{lemma}\label{lemma:2D:convex}
Let $d=2$, and let Assumption \ref{ass:alpha:general} with $\alpha=\alpha_{\rm in}$ holds. For each $i\in\{1,2\}$, let $\alpha_{\rm c}^i$ be a constant, let $D^i$ be a convex polygon such that $\overline{D^i}\subset\Om$, and define
$\alpha^i(x) := \alpha_{\rm in}(x) + \alpha_{\rm c}^i\mathds{1}_{D^i}(x).$
Let $\alpha^1$ and $\alpha^2$ satisfy Assumption \ref{ass:alpha:general}.
Then $\alpha^1$ and $\alpha^2$ satisfy Assumption \ref{ass:alpha:simplices}.
\end{lemma}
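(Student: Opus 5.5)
We need to show that $\alpha^1-\alpha^2=\alpha_{\rm c}^1\mathds{1}_{D^1}-\alpha_{\rm c}^2\mathds{1}_{D^2}$ admits a simplicial decomposition with the vertex property required in Assumption \ref{ass:alpha:simplices}. The difference is piecewise constant on the cells of the arrangement generated by the polygons $D^1,D^2$. If $\alpha^1\equiv\alpha^2$ there is nothing to prove. Otherwise the plan is to let $E:=\operatorname{supp}(\alpha^1-\alpha^2)$, a finite union of open polygonal pieces, and $P$ the convex hull of $E$. We will find a vertex $x_0$ of $P$ near which $E$ is a single connected circular sector on which $\alpha^1-\alpha^2$ is constant. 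Lemma \ref{lemma:2D:simplepoly:onesimplex} then gives the required triangulation.

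\textbf{Case analysis.} If $\alpha_{\rm c}^1=\alpha_{\rm c}^2=:c\neq0$, then $D^1\neq D^2$ and $E$ is the symmetric difference $D^1\triangle D^2$, with values $\pm c$. If $\alpha_{\rm c}^1\ne\alpha_{\rm c}^2$, then $E\supseteq D^1\cup D^2$ up to the region $D^1\cap D^2$, which drops out only if one of the constants is zero. If exactly one constant vanishes, $E$ is a single convex polygon with a constant value, and any vertex works. In every case, $P=\mathrm{conv}(\overline E)$ is contained in $\mathrm{conv}(D^1\cup D^2)$. Choose $x_0$ a vertex of $P$, which is also a vertex of $\mathrm{conv}(D^1\cup D^2)$ or of one polygon. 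By Lemma \ref{lemma:convexpolygon}, pick a direction $\omega$ such that $x_0$ is the unique maximizer of $x\cdot\omega$ over $\overline P$.

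\textbf{Local structure at $x_0$.} Since $x_0\in\partial(\mathrm{conv})$, it is an extreme point of at least one $\overline{D^i}$. The key claim is that near $x_0$, $E$ coincides with a single sector $D^i\cap B_\varepsilon(x_0)$ (or with $D^1\cap D^2$-free region) carrying one constant value.

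First suppose $x_0$ is a vertex of only one of the polygons, say $D^1$, and $x_0\notin\overline{D^2}$. Then near $x_0$ we have $\alpha^1-\alpha^2=\alpha_{\rm c}^1\mathds{1}_{D^1}$, a convex sector, so the claim holds.

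Next suppose $x_0$ is a vertex of both polygons, or lies on $\partial D^2$. Both polygons are convex, and both lie in the half-plane $\{x\cdot\omega\le x_0\cdot\omega\}$, so their tangent cones at $x_0$ are convex cones $K_1,K_2$ of opening angle less than $\pi$. Near $x_0$, the difference equals $\alpha_{\rm c}^1\mathds{1}_{K_1}-\alpha_{\rm c}^2\mathds{1}_{K_2}$. The support of this restricted to a small ball may be a union of up to three sectors: $K_1\setminus K_2$, $K_1\cap K_2$, and $K_2\setminus K_1$. This is the main obstacle, because $x_0$ would then be a vertex of several triangles, in the configuration of Fig. \ref{fig:Assumption:violated} (bottom).

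\textbf{Resolving the obstacle.} We will exploit the freedom in choosing $x_0$ among the vertices of $P$. The plan is to show that not every vertex of $P$ can be a common vertex of both polygons with $K_1\ne K_2$. Walk around the boundary of $P$. Since $D^1\ne D^2$, or since the constants differ, some extreme point of $P$ either belongs to exactly one $\overline{D^i}$, which reduces to the first subcase, or satisfies $K_1=K_2$ near it. When $K_1=K_2$, the local support is a single sector with value $\alpha_{\rm c}^1-\alpha_{\rm c}^2$, which is nonzero because $x_0\in\overline E$.

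To justify this, assume every vertex of $P$ is a common vertex of both polygons with $K_1\neq K_2$. Then all vertices of $P$ lie in $\overline{D^1}\cap\overline{D^2}$, so $P\subseteq \overline{D^1}\cap\overline{D^2}$ by convexity. This forces $P=\overline{D^1}=\overline{D^2}$, because $E$ contains points of $D^1\triangle D^2$ or of $D^i$. Hence $K_1=K_2$ at every vertex, a contradiction.

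With such an $x_0$, the support $E$ is locally a single connected sector. Lemma \ref{lemma:2D:simplepoly:onesimplex}, applied to $E$ and refined so that each triangle lies in a single cell of the arrangement (and therefore carries a constant value), yields disjoint triangles with $x_0$ belonging to exactly one of them. This decomposition is irreducible, since we may discard zero-valued pieces. Thus Assumption \ref{ass:alpha:simplices} holds.
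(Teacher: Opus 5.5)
Your proposal is correct and follows essentially the paper's route: show that the vertices of $P$ cannot all lie in $\overline{D^1}\cap\overline{D^2}$ unless $D^1=D^2$, so some vertex of $P$ sees the support as a single sector with a nonzero constant value (your $K_1=K_2$ case simply absorbs the paper's Case~1), and then invoke Lemma~\ref{lemma:2D:simplepoly:onesimplex}. One small repair: state your contradiction hypothesis as ``every vertex of $P$ lies in $\overline{D^1}\cap\overline{D^2}$'' (that is all your argument uses), because negating ``every vertex is a common vertex with $K_1\neq K_2$'' leaves uncovered a vertex of one polygon lying on an edge of the other; also, your claim that both polygons lie in $\{x\cdot\omega\le x_0\cdot\omega\}$ can fail in the symmetric-difference case, though the proof never needs it.
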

\begin{proof}
Note that
    \begin{align*}
        &\alpha^1(x)-\alpha^2(x)=\alpha_{\rm c}^1\mathds{1}_{D^1}(x)-\alpha_{\rm c}^2\mathds{1}_{D^2}(x)\\
        =&\alpha_{\rm c}^1\mathds{1}_{D^1\backslash D^2}(x)+(\alpha_{\rm c}^1-\alpha_{\rm c}^2)\mathds{1}_{D^1\cap D^2}(x)-\alpha_{\rm c}^2\mathds{1}_{D^2\backslash D^1}(x),\quad\forall x\in\Om.
    \end{align*}
    Suppose that $\alpha^1\ne\alpha^2$.
    Let $P$ be the convex hull of the support of $\alpha^1-\alpha^2$.
    Below, we prove that Assumption \ref{ass:alpha:simplices} is satisfied for the cases $D^1=D^2$ and $D^1\ne D^2$ separately.

    \smallskip
    \smallskip
\noindent{\bf Case 1 ($D^1=D^2$).} By the hypothesis $\alpha^1\ne\alpha^2$, we have $\alpha_{\rm c}^1\neq \alpha_{\rm c}^2$, and thus, $\alpha^1-\alpha^2=(\alpha_{\rm c}^1-\alpha_{\rm c}^2)\mathds{1}_{D^1}$ has the support $D^1$.
Since $D^1$ is convex, we have $P=D^1$.
    Fix $x_0$ to be any vertex of $P$.
    Let $\varepsilon=\min\{|x_0-x|\,:\,x\mbox{ is a vertex of }P\}$.
    Since $P$ is a convex polygon, $x_0$ and $\varepsilon$ satisfy the assumption of Lemma \ref{lemma:2D:simplepoly:onesimplex}.
    Then Lemma \ref{lemma:2D:simplepoly:onesimplex} implies that Assumption \ref{ass:alpha:simplices} holds.

    \smallskip
    \smallskip
    \noindent{\bf Case 2 ($D^1\neq D^2$)}
    If $\alpha_{\rm c}^1\ne\alpha_{\rm c}^2$, we have $\mathrm{supp}(\alpha^1-\alpha^2)=D^1\cup D^2$. Since $P$ is the convex hull of $\mathrm{supp}(\alpha^1-\alpha^2)$, every vertex of $P$ is also a vertex of either $D^1$ or $D^2$.
However, since $D^1$ and $D^2$ are both convex polygons, if every vertex of $P$ is also the vertex of both $D^1$ and $D^2$, then the only possible case is $D^1=D^2$, which contradicts the hypothesis. (The pair $(\alpha^1,\alpha^3)$ in Fig. \ref{fig:examples} is one counterexample of the statement without the convexity assumption.) Thus, there exists a vertex $x_0$ of both $P$ and $D^1$ (or respectively $D^2$) such that $\operatorname{dist}(x_0,D^2)>0$ (or respectively $\operatorname{dist}(x_0,D^1)>0$).
This statement is also true for the case $\alpha_{\rm c}^1=\alpha_{\rm c}^2$ when we have $\mathrm{supp}(\alpha^1-\alpha^2)=(D^1\backslash D^2)\cup(D^2\backslash D^1)$. (The pair $(\alpha^1,\alpha^2)$ in Fig. \ref{fig:examples} is one counterexample without the convexity assumption.)
Without loss of generality, let $x_0$ be a vertex of $D^1$ and set
$$\varepsilon = \min\left(\operatorname{dist}(x_0,D^2),\min\{|x_0-x|\,:\,x\mbox{ is a vertex of }D^1\}\right).$$
Then the vertex $x_0$ of $D^1$ and $\varepsilon$ satisfy the assumption for Lemma \ref{lemma:2D:simplepoly:onesimplex}.
Thus, Assumption \ref{ass:alpha:simplices} is satisfied by Lemma \ref{lemma:2D:simplepoly:onesimplex}.
\end{proof}

\begin{proof}[Proof of Theorem \ref{theorem:2D:convex}]
Combining Lemma \ref{lemma:2D:convex} with Theorem \ref{theorem:simplex} gives Theorem \ref{theorem:2D:convex}.
\end{proof}

\subsection{Rectangular inclusions}\label{section:rectangles}

Let $K:=(-\frac12,\frac12)^d\subset\mathbb{R}^d$ and $K_{\delta}(x):=x+\delta\cdot K$ for all $\delta\in(0,\infty)^d$ and $x\in\mathbb{R}^d$.
\begin{ass}\label{ass:alpha:cubes}
$\alpha^i:\Om\to(0,1)$ is measurable and satisfies $\overline{\alpha^i}<2\underline{\alpha^i}$ with $i=1,2$. Also, there exist some $N\in\mathbb{N}$, $\{\delta_j\}_{j=1}^N\subset\mathbb{R}_+^d$, $\{x_j\}_{j=1}^N\subset\Om$ and $\{\alpha_j\}_{j=1}^N\subset(-1,1)$ such that $\{K_{\delta_j}(x_j)\}_{j=1}^N$ is a family of pairwise disjoint sets satisfying $\overline{K_{\delta_j}(x_j)}\subset\Om$ for all $j\in[N]$ and
\begin{equation}\label{eq:rect:assumption}
    \alpha^1(x)-\alpha^2(x)=\sum_{j=1}^N \alpha_j\mathds{1}_{K_{\delta_j}(x_j)}(x),\quad\forall x\in\Om.
\end{equation}
\end{ass}
\begin{remark}
Note that Assumption \ref{ass:alpha:cubes} does not require $\{\overline{K_{\delta_j}(x_j)}\}_{j=1}^N$ to be pairwise disjoint. So it can be applied to rectangular partitions of domains.
\end{remark}

The next result gives the unique identifiability of rectangular inclusions and their amplitudes.
\begin{theorem}
\label{theorem:cubes}
Let Assumption \ref{ass:g:exptype} hold. Let $U=U^i$ be the solution to  \eqref{eq:2D3D:ibvp} with $\alpha=\alpha^i$ for each $i=1,2$ satisfying Assumption \ref{ass:alpha:cubes}. If $\p_\nu U^1(t,x)=\p_\nu U^2(t,x)$ for all $(t,x)\in I\times\p\Om$,
then we have $\alpha^1=\alpha^2$.
\end{theorem}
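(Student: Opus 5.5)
The plan is to reduce Theorem \ref{theorem:cubes} to Theorem \ref{theorem:simplex}: under Assumption \ref{ass:alpha:cubes}, decompose each box into simplices so that Assumption \ref{ass:alpha:simplices} holds. An alternative self-contained route would compute $\int_{K_\delta(x)} e^{x\cdot y}\,{\rm d}x=\prod_{j}2\sinh(\delta_j y_j/2)/y_j$ and redo the asymptotic argument of Lemma \ref{lemma:simplex:linind}. The reduction is shorter, so I follow it.

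First, as in the proof of Theorem \ref{theorem:balls}, I remove boxes with $\alpha_j=0$. Since the boxes are pairwise disjoint open sets, the representation \eqref{eq:rect:assumption} then has all $\alpha_j\neq0$ and is irreducible. Suppose $\alpha^1\not\equiv\alpha^2$. Let $P$ be the convex hull of $\bigcup_j K_{\delta_j}(x_j)$; it is a convex polytope whose vertices are among the box corners. I choose a generic $\omega\in\mathbb{S}^{d-1}$ with all coordinates nonzero and such that $c\cdot\omega$ takes distinct values on all distinct corners $c$ of all boxes; this is possible by Lemma \ref{lemma:sphere:and:hyperplane}. Let $x_0$ be the corner maximizing $c\cdot\omega$. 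Then $x_0$ is a vertex of $P$, by a Lemma \ref{lemma:convexpolygon}-type separation. It is the corner of exactly one box, say $K_{\delta_1}(x_1)$, because two disjoint open boxes sharing the corner $x_0$ would have to lie in different orthants at $x_0$. Only the orthant direction $-\mathrm{sign}(\omega)$ keeps every point of the box below $x_0\cdot\omega$, so two boxes cannot both sit in that orthant. This uniqueness is the point that needs care. I would argue it directly: if another box $K'$ has $x_0$ as a corner, then $K'$ lies in an orthant containing a direction $v$ with $v\cdot\omega>0$. The corner of $K'$ obtained by moving from $x_0$ along the corresponding edge then has a larger value of $c\cdot\omega$, contradicting maximality.

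Next, I triangulate each box into simplices of the form $T_{x}(V)$ with $\det V>0$, for example by the standard Kuhn/Freudenthal triangulation. For $K_{\delta_1}(x_1)$, I choose the triangulation so that $x_0$ belongs to exactly one simplex, namely the corner simplex cut off at $x_0$ by the hyperplane through its $d$ adjacent corners. I then triangulate the remaining convex polytope arbitrarily without using $x_0$, which is possible since every point of that remainder is a convex combination of the other corners. Up to null sets, $\alpha^1-\alpha^2$ is then constant on each simplex of this disjoint family. The simplex containing $x_0$ has $x_0$ as a vertex, $x_0$ is a vertex of the convex hull $P$ of the union, and $x_0$ is not a vertex of any other simplex. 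The hull of the simplices equals $P$. Irreducibility holds automatically once I drop nothing, since every simplex carries a nonzero amplitude. So Assumption \ref{ass:alpha:simplices} holds, and Theorem \ref{theorem:simplex} yields $\alpha^1=\alpha^2$.

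The main obstacle is the step where one requires that the distinguished vertex not be a vertex of any other simplex across different boxes whose closures may touch. This is exactly the orthant argument above, and I expect to write it carefully using the genericity of $\omega$. A secondary concern is that the triangulation must produce open disjoint simplices whose closures cover each box. Measure-zero faces do not matter, since only integrals enter through Lemma \ref{lemma:simplex:linind}.
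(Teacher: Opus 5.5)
Your proposal is correct and follows the paper's proof. Both arguments reduce Theorem \ref{theorem:cubes} to Theorem \ref{theorem:simplex} by checking Assumption \ref{ass:alpha:simplices}: pick a vertex of the convex hull $P$ that is a corner of exactly one box, then triangulate that box so that the corner simplex at this vertex is the only simplex having it as a vertex. You select the vertex via a generic maximizing direction and an orthant argument, whereas the paper takes an arbitrary vertex of $P$ and appeals to the boxes having parallel sides. Your explicit treatment of the remaining convex piece of the box, and of irreducibility, fills in details the paper leaves implicit.
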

\begin{proof}
We prove that Assumption \ref{ass:alpha:cubes} implies Assumption \ref{ass:alpha:simplices}.
Then, from Theorem \ref{theorem:simplex}, Theorem \ref{theorem:cubes} follows.
Suppose that $\alpha^1$ and $\alpha^2$ satisfy Assumption \ref{ass:alpha:cubes}.
If $\alpha^1=\alpha^2$ almost everywhere, Assumption \ref{ass:alpha:simplices} holds trivially. Otherwise, \eqref{eq:rect:assumption} holds for some $N\in\mathbb{N}$, $\{\delta_j\}_{j=1}^N\subset\mathbb{R}_+^d$, $\{x_j\}_{j=1}^N\subset\Om$ and $\{\alpha_j\}_{j=1}^N\subset(-1,1)\backslash\{0\}$.
Let $P$ be the convex hull of $\bigcup_{j=1}^N K_{\delta_j}(x)$ and $y$ be any vertex of $P$.
Then $y$ is also a vertex of $K_{\delta_k}(x_k)$ for some $k\in[N]$.
However, $y$ cannot be a vertex of $K_{\delta_\ell}(x_\ell)$ for $\ell\ne k$: Otherwise, $y$ cannot be a vertex of $P$ since $K_{\delta_\ell}(x_\ell)$ are rectangles with parallel sides.
Thus, by subdividing the rectangles into simplices including the simplex determined by the convex combination of $y$ and the $d$ vertices of $K_{\delta_k}(x_k)$ adjacent to $y$, we derive Assumption \ref{ass:alpha:simplices}.
\end{proof}

\subsection{Multiple polygonal inclusions}\label{subsection:various:cases}

We arbitrarily fix $\alpha_{\rm in}$ satisfying Assumption \ref{ass:alpha:general}.
\begin{ass}\label{ass:alpha:poly:nocommonvertex}
For $i=1,2$, $\alpha^i$ satisfies Assumption \ref{ass:alpha:general}, and there exist $M^i\in\mathbb{N}$, $\{\alpha_j^{i}\}_{j=1}^{M^i}\subset (-1,1)$ and some polyhedrons $\{P_j^i\}_{j=1}^{M^i}$ such that
    $$\alpha^i(x) = \alpha_{\rm in}(x) +\sum_{j=1}^{M^i}\alpha_j^i\mathds{1}_{P_j^i}(x),\quad\forall x\in\Om,$$
    where $\overline{P_j^i}\subset\Om$, each surface $\p P_j^i$ is not self intersecting, $\operatorname{dist}(P_j^i,P_k^i)>0$ whenever $j\ne k$, and the set $\mathcal{V}^i$ of vertices of all polyhedrons $\{P_j^i\}_{j=1}^{M^i}$ satisfying $\mathcal{V}^1\cap\mathcal{V}^2=\emptyset$ unless $\alpha^1=\alpha^2$ almost everywhere in $\Om$.
\end{ass}
Let $\alpha^1$ and $\alpha^2$ satisfy Assumption \ref{ass:alpha:poly:nocommonvertex} and suppose that $\alpha^1\ne\alpha^2$.
Then, there exists an irreducible finite set of disjoint simplices $\{T_{x_i}(V_i)\}_{i=1}^N$ in $\Om$ such that $\alpha^1-\alpha^2$ is constant in $T_{x_i}(V_i)$ for each $i\in[N]$, and $\alpha^1=\alpha^2$ in $\Om\backslash\bigcup_{i=1}^N \overline{T_{x_i}(V_i)}$.
Let $P$ be the convex hull of $\mathrm{supp}(\alpha^1-\alpha^2)$ and let $x_0$ be a vertex of $P$.
If $d=2$, $x_0$ is the endpoint of exactly two edges of $\mathrm{supp}(\alpha^1-\alpha^2)$, so we can choose $\{T_{x_i}(V_i)\}_{i=1}^N$ so that $x_0$ is a vertex of exactly one of the simplices in $\{T_{x_i}(V_i)\}_{i=1}^N$.
However, if $d\ge3$, $x_0$ can be the endpoint of arbitrarily many edges of $\mathrm{supp}(\alpha^1-\alpha^2)$, so there can be arbitrarily big lower bound of number of simplices in $\{T_{x_i}(V_i)\}_{i=1}^N$ that has the vertex $x_0$.

Assumption \ref{ass:alpha:poly:nocommonvertex} implies Assumption \ref{ass:alpha:simplices} when $d=2$. So Theorem \ref{theorem:simplex} is applicable to the pairs $(\alpha^1,\alpha^2)$ satisfying Assumption \ref{ass:alpha:poly:nocommonvertex} in $\mathbb{R}^2$.
    This is not true if $d\ge3$, which motivates the following assumption.
Like in Section \ref{sect:theorem:simplex}, for each triangulation $\{T_{x_i}(V_i)\}_{i=1}^N$, we define $\widetilde{S}$ as in \eqref{eq:S:tilde}.

\begin{ass}\label{ass:3dorhigher:multiplepolygons}
    Let $d\ge3$.
Let $\alpha^1$ and $\alpha^2$ satisfy Assumption \ref{ass:alpha:poly:nocommonvertex}.
Unless $\alpha^1=\alpha^2$ in $\Om$, there exist a vertex $z$ of the convex hull $P$ of  $\mathrm{supp}(\alpha^1-\alpha^2)$, a triangulation $\{T_{x_i}(V_i)\}_{i=1}^{N}$ of the support of $\alpha^1-\alpha^2$ among which only those in $\{T_{x_i}(V_i)\}_{i=1}^{m}$ have $z=x_i$ as a vertex, and a vector $\widetilde{\omega}\in\widetilde{S}$ such that the half-space $z+\{x\in\mathbb{R}^d\,:\,x\cdot\widetilde{\omega}<0\}$ contains $P$, and at least one of the following conditions holds:
\begin{itemize}
    \item[\rm(a)] The cone $z+\{x\in\mathbb{R}^d\,:\,x\cdot\widetilde{\omega}<-|x|\cos(\frac{\pi}{2d})\}$ contains $\bigcup_{i=1}^{m} \overline{T_{x_i}(V_i)}$.
    \item[\rm(b)] There exists some $\widetilde{\omega}'\in\mathbb{S}^{d-1}$ such that $\widetilde{\omega}'\cdot\widetilde{\omega}=0$ and
    \begin{equation}\label{eq:3dorhigher:multi}
        \sum_{i=1}^m |\det V_i|\textstyle\prod_{j=1}^d (V_i^\top (\widetilde{\omega}+{\rm i}\widetilde{\omega}'))_j^{-1}\ne0.
    \end{equation}
\end{itemize}
\end{ass}

\begin{lemma}
Assumption \ref{ass:3dorhigher:multiplepolygons}(a) implies Assumption \ref{ass:3dorhigher:multiplepolygons}(b).
\end{lemma}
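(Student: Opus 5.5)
The plan is to show that, under (a), \emph{every} unit vector $\widetilde{\omega}'\perp\widetilde{\omega}$ satisfies \eqref{eq:3dorhigher:multi}. The mechanism is that each summand is a positive multiple of a complex number lying in one fixed open half-plane of $\mathbb{C}$, so the sum cannot vanish.

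First, since $d\ge3$, I fix any $\widetilde{\omega}'\in\mathbb{S}^{d-1}$ with $\widetilde{\omega}'\cdot\widetilde{\omega}=0$. For $i\in[m]$ the simplex $T_{x_i}(V_i)$ has $x_i=z$, so its edge vectors issuing from $z$ are $a_{ij}:=V_i\mathrm{e}_j\neq0$ for $j\in[d]$. The points $z+a_{ij}$ lie in $\overline{T_{x_i}(V_i)}$, hence in the open cone of (a). This gives the strict inequality
$$a_{ij}\cdot\widetilde{\omega}<-|a_{ij}|\cos\tfrac{\pi}{2d}.$$
The entries in \eqref{eq:3dorhigher:multi} are
$$c_{ij}:=(V_i^\top(\widetilde{\omega}+{\rm i}\widetilde{\omega}'))_j=a_{ij}\cdot\widetilde{\omega}+{\rm i}\,a_{ij}\cdot\widetilde{\omega}'.$$
Because $\widetilde{\omega}$ and $\widetilde{\omega}'$ are orthonormal, Bessel's inequality gives $|c_{ij}|\le|a_{ij}|$.

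Second, I locate the argument of each $c_{ij}$. We have $\Re c_{ij}<0$ and
$$-\Re c_{ij}/|c_{ij}|\ \ge\ -\Re c_{ij}/|a_{ij}|\ >\ \cos\tfrac{\pi}{2d}.$$
Hence $c_{ij}=|c_{ij}|e^{{\rm i}(\pi+\phi_{ij})}$ with $|\phi_{ij}|<\frac{\pi}{2d}$; in particular $c_{ij}\neq0$. It follows that
$$\prod_{j=1}^d c_{ij}^{-1}=\Big(\prod_j|c_{ij}|^{-1}\Big)(-1)^d e^{-{\rm i}\Phi_i},\qquad \Phi_i:=\sum_j\phi_{ij},\quad |\Phi_i|<\tfrac{\pi}{2}.$$

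Third, I multiply the sum in \eqref{eq:3dorhigher:multi} by $(-1)^d$. Each term becomes a strictly positive number $|\det V_i|\prod_j|c_{ij}|^{-1}$ times $e^{-{\rm i}\Phi_i}$, which has strictly positive real part $\cos\Phi_i$. The real part of the whole sum is therefore strictly positive, so the sum is nonzero, which is (b).

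The only delicate point is keeping every inequality strict: the closed simplices must lie in the \emph{open} cone, and the bound $|c_{ij}|\le|a_{ij}|$ goes in the favorable direction. If instead $z$ is a vertex of $T_{x_i}(V_i)$ other than $x_i$, I would first re-parametrize the simplex with base point $z$, adjusting $V_i$ while keeping $\det V_i>0$. This does not affect the argument, which uses only $|\det V_i|$ and the edge vectors from $z$.
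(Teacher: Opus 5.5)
Your proof is correct and follows essentially the same route as the paper: both show that each $c_{ij}$ lies in the open sector of half-angle $\pi/(2d)$ about the negative real axis. You get this from a cosine bound combined with Bessel's inequality, and the paper from the equivalent tangent bound $|(V_i^\top\widetilde{\omega}')_j|/|(V_i^\top\widetilde{\omega})_j|<\tan\frac{\pi}{2d}$. In both versions, $(-1)^d$ times every summand of \eqref{eq:3dorhigher:multi} then has strictly positive real part. Your closing remark about re-basing at $z$ is unnecessary, because Assumption \ref{ass:3dorhigher:multiplepolygons} already stipulates $x_i=z$ for $i\in[m]$; re-basing would in any case change the matrices $V_i$ appearing in \eqref{eq:3dorhigher:multi}.
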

\begin{proof}
Fix any $\widetilde{\omega}'\in\mathbb{S}^{d-1}$ such that $\widetilde{\omega}'\cdot\widetilde{\omega}=0$.
We prove the following sufficient condition for \eqref{eq:3dorhigher:multi}:
\begin{equation}\label{ineq:nonzero:proof}
    (-1)^d\Re\left(\prod_{j=1}^d (V_i^\top (\widetilde{\omega}+{\rm i}\widetilde{\omega}'))_j^{-1}\right)>0,\quad\forall i\in[m].
\end{equation}
Since the cone $z+\{x\in\mathbb{R}^d\,:\,x\cdot\widetilde{\omega}<-|x|\cos(\frac{\pi}{2d})\}$ contains $\bigcup_{i=1}^{m} \overline{T_{x_i}(V_i)}$, it also contains the vertex $z+V_i\mathrm{e}_j$ of $T_{x_i}(V_i)$ for every $i\in[m]$ and $j\in[d]$.
Thus we have $V_i\mathrm{e}_j \cdot \widetilde{\omega}< -|V_i\mathrm{e}_j|\cos(\frac{\pi}{2d}),$
which implies
\begin{align*}
    |(V_i^\top\widetilde{\omega})_j|^2 &> |V_i\mathrm{e}_j|^2\cos^2\left(\frac{\pi}{2d}\right),\\[2mm]
    |(V_i^\top\widetilde{\omega}')_j|^2 &\le |V_i\mathrm{e}_j - (V_i\mathrm{e}_j \cdot \widetilde{\omega})\widetilde{\omega}|^2 =|V_i\mathrm{e}_j|^2-|V_i\mathrm{e}_j \cdot \widetilde{\omega}|^2\\
    &< |V_i\mathrm{e}_j|^2\left(1-\cos^2\left(\frac{\pi}{2d}\right)\right)=|V_i\mathrm{e}_j|^2\sin^2\left(\frac{\pi}{2d}\right).
\end{align*}
Thus, for every $i\in[m]$ and $j\in[d]$, we have
$$\left|\arg\left(1+{\rm i}(V_i^\top \widetilde{\omega})_j^{-1}(V_i^\top \widetilde{\omega}')_j\right)\right|<\frac{\pi}{2d}.$$
We also have $(V_i^\top\widetilde{w})_j<0$ for all $i,j$ so that
\begin{align*}
    \arg\left(\prod_{j=1}^d (V_i^\top (\widetilde{\omega}+{\rm i}\widetilde{\omega}'))_j^{-1}\right)&=\arg\left(\prod_{j=1}^d\frac{(V_i^\top \widetilde{\omega})_j-{\rm i}(V_i^\top\widetilde{\omega}')_j}{(V_i^\top\widetilde{\omega})_j^2+(V_i^\top\widetilde{\omega}')_j^2}\right)=d\pi+\sum_{j=1}^d \arg(1+{\rm i}(V_i^\top \widetilde{\omega})_j^{-1}(V_i^\top \widetilde{\omega}')_j).
\end{align*}
By combining the preceding relations, we arrive at
$$\left|\arg\left(\prod_{j=1}^d (V_i^\top (\widetilde{\omega}+{\rm i}\widetilde{\omega}'))_j^{-1}\right) - d\pi\right|<\sum_{j=1}^d\frac{\pi}{2d}=\frac{\pi}{2},$$
which gives the desired assertion \eqref{ineq:nonzero:proof}.
\end{proof}
The next result gives the unique identifiability of polygonal inclusions and their amplitudes.
\begin{theorem}
\label{theorem:cones:manysides}
Let Assumptions \ref{ass:alpha:poly:nocommonvertex} and \ref{ass:g:exptype} hold, and also Assumption \ref{ass:3dorhigher:multiplepolygons} if $d\ge3$, and let $U=U^i$ be the solution to  \eqref{eq:2D3D:ibvp} with $\alpha=\alpha^i$, $i=1,2$.
If $\p_\nu U^1(t,x)=\p_\nu U^2(t,x)$ for all $(t,x)\in I\times\p\Om$,
then $\alpha^1=\alpha^2$.
\end{theorem}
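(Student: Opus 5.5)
The argument has the same skeleton as the proof of Theorem~\ref{theorem:simplex}: we use the orthogonality identity, pass to complex directions, and compare growth rates along a half-line. The case $d=2$ is handled by the discussion preceding Assumption~\ref{ass:3dorhigher:multiplepolygons}. There, Assumption~\ref{ass:alpha:poly:nocommonvertex} yields Assumption~\ref{ass:alpha:simplices}, and Theorem~\ref{theorem:simplex} applies directly.

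For $d\ge3$ we argue by contradiction and suppose $\alpha^1\neq\alpha^2$. Lemma~\ref{lemma:uniqueness:main:identity} gives \eqref{eqn:orthogonal}. We take the triangulation $\{T_{x_i}(V_i)\}_{i=1}^N$, the vertex $z$, the index $m$ and the direction $\widetilde\omega\in\widetilde S$ provided by Assumption~\ref{ass:3dorhigher:multiplepolygons}, which we may use in form (b) by the preceding lemma. Writing $\alpha^1-\alpha^2=\sum_i\alpha_i\mathds 1_{T_{x_i}(V_i)}$, the constant $\alpha_i$ takes a common value $c\neq0$ for $i\le m$. Indeed, $z$ is a vertex of a single polyhedron from one of the two families, since $\mathcal V^1\cap\mathcal V^2=\emptyset$ and the polyhedra within a family are separated. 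Near $z$ the difference $\alpha^1-\alpha^2$ therefore equals $\pm\alpha^i_j$ on that polyhedron.

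Next we choose the basis $\hat{\mathrm e}_1=\omega_0$ and apply Lemma~\ref{lemma:simplex:integral:complex}. This gives $\sum_i\alpha_iI_i(\widetilde\omega_R)=0$ along $\widetilde\theta-{\rm i}R$, with $\widetilde\omega_R+\hat{\mathrm e}_1=\tfrac12e^R(\widetilde\omega+{\rm i}h(\widetilde\omega))(1+o(1))$ as in the proof of Lemma~\ref{lemma:simplex:linind}. We pick $(\widetilde\theta,\widetilde{\bm\varphi}_{d-2})$ so that $h(\widetilde\omega)=\widetilde\omega'$, which is possible after rotating the auxiliary basis vectors $\hat{\mathrm e}_2,\dots,\hat{\mathrm e}_d$, since $h(\widetilde\omega)$ can be any unit vector orthogonal to $\widetilde\omega$ compatible with $\omega_0$. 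This compatibility needs care and may require choosing $\widetilde\omega'$ appropriately, or perturbing via the density of $\widetilde S$.

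The key step is the asymptotics. For $i\le m$, with $x_i=z$ after relabelling, we have $(V_i^\top\widetilde\omega)_j<0$ for all $j$, because $P$ lies in the half-space $z+\{x\cdot\widetilde\omega<0\}$. The exponential terms in $I_i$ are then $e^{z\cdot\cdot}\,e^{\frac12e^R(V_i^\top\widetilde\omega)_j}$, which is smaller by a factor $e^{-ce^R}$. So
\[
I_i(\widetilde\omega_R)=e^{z\cdot(\widetilde\omega_R+\hat{\mathrm e}_1)}|\det V_i|(-1)^d\Big(\tfrac12e^R\Big)^{-d}\prod_j\big(V_i^\top(\widetilde\omega+{\rm i}\widetilde\omega')\big)_j^{-1}(1+o(1)).
\]
For $i>m$, every vertex $x$ of $T_{x_i}(V_i)$ satisfies $x\cdot\widetilde\omega<z\cdot\widetilde\omega-\delta$ for some $\delta>0$, since $z$ is a strict maximiser over $\overline P$. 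Hence $|I_i|\lesssim e^{\frac12e^R(z\cdot\widetilde\omega-\delta)}\,\mathrm{poly}(e^R)$, which is $o$ of the common prefactor. Dividing by $e^{z\cdot(\widetilde\omega_R+\hat{\mathrm e}_1)}(\tfrac12e^R)^{-d}$ and letting $R\to\infty$ gives
\[
c\sum_{i=1}^m|\det V_i|\prod_j\big(V_i^\top(\widetilde\omega+{\rm i}\widetilde\omega')\big)_j^{-1}=0,
\]
which contradicts \eqref{eq:3dorhigher:multi}.

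The main obstacle is the leading-order bookkeeping for $i\le m$: the cancellation of the $e^{v_{ij}}$ terms and the uniformity of the $(1+o(1))$ factors. A secondary obstacle is matching $h(\widetilde\omega)$ with the prescribed $\widetilde\omega'$. If that cannot be arranged exactly, I would first try to show that the condition in (b) is open in $\widetilde\omega'$ and use analyticity in $\bm\varphi$ to adjust.
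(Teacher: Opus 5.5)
Your argument follows the paper's proof of this theorem step by step. The case $d=2$ goes through Theorem \ref{theorem:simplex}. For $d\ge3$ you argue by contradiction: a common nonzero coefficient sits on the simplices at $z$, the identity is complexified along a half-line, the $(-1)^d\prod_j v_{ij}^{-1}$ term dominates for $i\le m$, the terms with $i>m$ are exponentially small (you justify this more carefully than the paper), and a limit contradicts \eqref{eq:3dorhigher:multi}. The one step that fails as written is matching the imaginary direction of the complex path with the $\widetilde\omega'$ of Assumption \ref{ass:3dorhigher:multiplepolygons}(b).

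\textbf{Why your matching step fails.} With $\hat{\mathrm e}_1=\omega_0$ one has $\omega(\theta,\bm\varphi_{d-2})=\cos\theta\,\omega_0+\sin\theta\,u(\bm\varphi_{d-2})$, where $u(\bm\varphi_{d-2}):=\omega(\tfrac{\pi}{2},\bm\varphi_{d-2})\perp\omega_0$. The imaginary part of $\omega(\widetilde\theta-{\rm i}R,\widetilde{\bm\varphi}_{d-2})$ is a multiple of $h(\widetilde\omega)=-\sin\widetilde\theta\,\omega_0+\cos\widetilde\theta\,u(\widetilde{\bm\varphi}_{d-2})$. This vector lies in $\operatorname{span}\{\omega_0,\widetilde\omega\}$, so for $\widetilde\omega\neq\pm\omega_0$ it is fixed up to sign by $\widetilde\omega$ and $\omega_0$ alone.
\begin{itemize}
\item Rotating $\hat{\mathrm e}_2,\dots,\hat{\mathrm e}_d$ does not change $h(\widetilde\omega)$.
\item Perturbing $\widetilde\omega$ inside $\widetilde S$ only moves $h(\widetilde\omega)$ slightly.
\item Condition (b) only provides \emph{some} unit $\widetilde\omega'\perp\widetilde\omega$, which for $d\ge3$ need not be close to $\pm h(\widetilde\omega)$.
\end{itemize}
Your limit identity would therefore be established for $h(\widetilde\omega)$, not for the prescribed $\widetilde\omega'$. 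The paper's proof passes over the same point: it writes $\widetilde\omega'$ where the construction of Lemma \ref{lemma:simplex:linind} produces $h(\widetilde\omega)$. Under (a) this is harmless, because \eqref{ineq:nonzero:proof} holds for every unit $\widetilde\omega'\perp\widetilde\omega$.

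\textbf{The fix.} Decouple the basis from $\omega_0$. Lemma \ref{lemma:exponential:in:integralkernel} and Remark \ref{remark:extended:identity} hold for any orthonormal basis, with $\omega_0$ entering only through $\beta$.
\begin{itemize}
\item Take $\mathrm e_1=\widetilde\omega$ and choose $\widetilde{\bm\varphi}_{d-2}$ with $u(\widetilde{\bm\varphi}_{d-2})=\widetilde\omega'$. At $\theta={\rm i}R$ this gives $\zeta_R:=\omega({\rm i}R,\widetilde{\bm\varphi}_{d-2})+\omega_0=\cosh R\,\widetilde\omega+{\rm i}\sinh R\,\widetilde\omega'+\omega_0$, and $\sum_i C_i\int_{T_{x_i}(V_i)}e^{x\cdot\zeta_R}\,{\rm d}x=0$.
\item For $i\le m$, set $Y:=V_i^\top\zeta_R$. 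Then $\Re Y_j\le -c\,e^R$. Comparing $\int_T e^{Y\cdot x}\,{\rm d}x$ with $\int_{\mathbb R_+^d}e^{Y\cdot x}\,{\rm d}x=\prod_j(-Y_j)^{-1}$ gives the leading term $(-1)^d(\tfrac12e^R)^{-d}\prod_j(V_i^\top(\widetilde\omega+{\rm i}\widetilde\omega'))_j^{-1}(1+o(1))$, with an error of order $e^{-c e^R/2}$.
\item For $i>m$, the crude bound $|T_{x_i}(V_i)|\max_{x\in\overline{T_{x_i}(V_i)}}e^{x\cdot\Re\zeta_R}$ suffices.
\end{itemize}
This yields your final identity with exactly the prescribed $\widetilde\omega'$. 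It also makes the explicit formula of Lemma \ref{lemma:simplex:integral:complex}, the genericity set $\widetilde S$, and the bookkeeping of the $e^{v_{ij}}$ terms you worried about unnecessary. Complexifying $\bm\varphi_{d-2}$ is not needed.
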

\begin{proof}
If $d=2$, Assumption \ref{ass:alpha:poly:nocommonvertex} implies Assumption \ref{ass:alpha:simplices}, so the uniqueness result follows directly from Theorem \ref{theorem:simplex}. Now
suppose that $d\ge3$, $\alpha^1\ne\alpha^2$ and Assumption \ref{ass:3dorhigher:multiplepolygons} is satisfied for the vertex $x_0$, an irreducible triangulation $\{T_{x_i}(V_i)\}_{i=1}^N$, $\widetilde{\omega}\in\widetilde{S}$ and $\widetilde{\omega}'\in\mathbb{S}^{d-1}$.
Then there exists $\{C_i\}_{i=1}^N\subset(-1,1)$ such that
$$\alpha^1(x)-\alpha^2(x) = \sum_{i=1}^N C_i\mathds{1}_{T_{x_i}(V_i)}(x),\quad\forall x\in\Om.$$
We enumerate the triangulation again so that $x_0$ is a vertex of $\{T_{x_i}(V_i)\}_{i=1}^m$ but not of $\{T_{x_i}(V_i)\}_{i=m+1}^{N}$.
Note that $x_i=x_0$ for all $i\in[m]$.
Let $v_{ij}(\omega):=(V_i^\top(\omega+\omega_0))_j$ for all $\omega\in\mathbb{C}^d$, and define $I_i(\omega)$ for $\omega\in\mathbb{C}$ by \eqref{eq:I:definition}, provided $v_{ij}(\omega)\ne0$.
Then, by Lemmas \ref{lemma:uniqueness:main:identity} and \ref{lemma:simplex:integral}, we have
\begin{equation}\label{eq:sigCIeq0}
    \sum_{i=1}^N C_i I_i(\omega)=0,\quad\forall \omega\in\mathbb{S}^{d-1}.
\end{equation}
We also define $\widetilde{\omega}_R$ by \eqref{eqn:tilde-omega}.
Since $x_0+V_i\mathrm{e}_j\in T_{x_i}(V_i)\subset P\subset x_0+\{x\in\mathbb{R}^d\,:\,x\cdot\widetilde{\omega}<0\}$, we have $(V_i^\top\widetilde{\omega})_j=V_i\mathrm{e}_j\cdot\widetilde{\omega}<0$ for all $i,j$. Thus, $v_{ij}(\widetilde{\omega}_R)=\frac{1}{2}e^R (V_i^\top (\widetilde{\omega}+{\rm i}\widetilde{\omega}'))_j (1+o(1))$ has a strictly negative real part for all $i,j$ and $R\gg1$. Therefore, the last term of \eqref{eq:I:definition} dominates as
$$I_i(\widetilde{\omega}_R)=e^{\frac{1}{2}e^R(x_0\cdot(\widetilde{\omega}+{\rm i}\widetilde{\omega}'))-dR}|\det V_i|(-2)^d \prod_{j=1}^d (V_i^\top (\widetilde{\omega}+{\rm i}\widetilde{\omega}'))_j^{-1} (1+o(1))\quad\mbox{as }R\to\infty,$$
for $i\in [N]$. This identity and the holomorphic extension of \eqref{eq:sigCIeq0} with $\omega=\widetilde{\omega}_R$ in the variable $\theta-{\rm i}R\in \{\theta-{\rm i}R\,:\,\theta\in(\theta_1,\theta_2)\mbox{ and }R>0\}$ in Section \ref{sect:theorem:simplex} imply
\begin{align*}
    &\quad \sum_{i=1}^m C_i|\det V_i|\prod_{j=1}^d (V_i^\top (\widetilde{\omega}+{\rm i}\widetilde{\omega}'))_j^{-1}
    =(-2)^{-d}\lim_{R\to\infty}e^{-\frac{1}{2}e^R(x_0\cdot(\widetilde{\omega}+{\rm i}\widetilde{\omega}'))+dR}\sum_{i=1}^N C_i I_i(\widetilde{\omega}_R)=0.
\end{align*}
Under Assumption \ref{ass:alpha:poly:nocommonvertex}, there is no common vertex of polygons, so we have $C_i=C_1$ for all $i\in\{2,\dots,m\}$.
Under Assumption \ref{ass:3dorhigher:multiplepolygons}, this gives $C_1=0$, which contradicts the irreducibility of the triangulation.
\end{proof}

\bibliographystyle{abbrv}
\bibliography{reference2}

\begin{thebibliography}{10}

\bibitem{AIP}
G.~Alessandrini, V.~Isakov, and J.~Powell.
\newblock Local uniqueness in the inverse conductivity problem with one
  measurement.
\newblock {\em Trans. Amer. Math. Soc.}, 347(8):3031--3041, 1995.

\bibitem{ADKL}
H.~Ammari, Y.~Deng, H.~Kang, and H.~Lee.
\newblock Reconstruction of inhomogeneous conductivities via the concept of
  generalized polarization tensors.
\newblock {\em Ann. Inst. Henri Poincar{\'e}, Anal. Non Lin{\'e}aire},
  31(5):877--897, 2014.

\bibitem{AK}
H.~Ammari and H.~Kang.
\newblock {\em {Polarization and Moment Tensors}}.
\newblock Springer, New York, 2007.

\bibitem{Artin:1964:Gamma}
E.~Artin.
\newblock {\em {The Gamma Function}}.
\newblock Winston, New York-Toronto-London, 1964.

\bibitem{ChechkinGorenflo:2005}
A.~V. Chechkin, R.~Gorenflo, and I.~M. Sokolov.
\newblock Fractional diffusion in inhomogeneous media.
\newblock {\em J. Phys. A: Math. Gen.}, 38:L679--L684, 2005.

\bibitem{DLLi}
Y.~Deng, J.~Li, and H.~Liu.
\newblock On identifying magnetized anomalies using geomagnetic monitoring.
\newblock {\em Arch. Ration. Mech. Anal.}, 231(1):153--187, 2019.

\bibitem{DLLi2}
Y.~Deng, J.~Li, and H.~Liu.
\newblock On identifying magnetized anomalies using geomagnetic monitoring
  within a magnetohydrodynamic model.
\newblock {\em Arch. Ration. Mech. Anal.}, 235(1):691--721, 2020.

\bibitem{Ev}
L.~C. Evans.
\newblock {\em {Partial Differential Equations}}.
\newblock AMS, Providence, RI, 2nd edition, 2010.

\bibitem{Fedotov:2012}
S.~Fedotov and S.~Falconer.
\newblock Subdiffusive master equation with space-dependent anomalous exponent
  and structural instability.
\newblock {\em Phys. Rev. E}, 85:031132, 6 pp., 2012.

\bibitem{FrIs}
A.~Friedman and V.~Isakov.
\newblock On the uniqueness in the inverse conductivity problem with one
  measurement.
\newblock {\em Indiana Univ. Math. J.}, 38(3):563--579, 1989.

\bibitem{FV}
A.~Friedman and M.~Vogelius.
\newblock Identification of small inhomogeneities of extreme conductivity by
  boundary measurements: {A} theorem on continuous dependence.
\newblock {\em Arch. Ration. Mech. Anal.}, 105(4):299--326, 1989.

\bibitem{HJK:2025:ISDVO}
J.~Hong, B.~Jin, and Y.~Kian.
\newblock Identification of a spatially-dependent variable order in
  one-dimensional subdiffusion.
\newblock {\em SIAM J. Math. Anal.}, 57(2):1315--1341, 2025.

\bibitem{HJK:2026:2D3D}
J.~Hong, B.~Jin, and Y.~Kian.
\newblock Unique and stable recovery of space-variable order in
  multidimensional subdiffusion.
\newblock {\em SIAM J. Math. Anal.}, 58(1):238--259, 2026.

\bibitem{Ik98}
M.~Ikehata.
\newblock Reconstruction of the shape of the inclusion by boundary
  measurements.
\newblock {\em Commun. Partial Differ. Equations}, 23(7-8):1459--1474, 1998.

\bibitem{Ikk1}
M.~Ikehata.
\newblock Size estimation of inclusion.
\newblock {\em J. Inverse Ill-Posed Probl.}, 6(2):127--140, 1998.

\bibitem{IkehataKian:2023}
M.~Ikehata and Y.~Kian.
\newblock The enclosure method for the detection of variable order in
  fractional diffusion equations.
\newblock {\em Inverse Probl. Imaging}, 17(1):180--202, 2023.

\bibitem{JK}
J.~Janno and N.~Kinash.
\newblock Reconstruction of an order of derivative and a source term in a
  fractional diffusion equation from final measurements.
\newblock {\em Inverse Problems}, 34(2):025007, 19 pp., 2018.

\bibitem{Jin:2021}
B.~Jin.
\newblock {\em {Fractional Differential Equations}}.
\newblock Springer-Nature, Switzerland, 2021.

\bibitem{JiKi2}
B.~Jin and Y.~Kian.
\newblock Recovery of the order of derivation for fractional diffusion
  equations in an unknown medium.
\newblock {\em SIAM J. Appl. Math.}, 82(3):1045--1067, 2022.

\bibitem{JiKi}
B.~Jin and Y.~Kian.
\newblock Recovery of a distributed order fractional derivative in an unknown
  medium.
\newblock {\em Commun. Math. Sci.}, 21(7):1791--1813, 2023.

\bibitem{JinRundell:2015}
B.~Jin and W.~Rundell.
\newblock A tutorial on inverse problems for anomalous diffusion processes.
\newblock {\em Inverse Problems}, 31(3):035003, 40 pp., 2015.

\bibitem{KaltenbacherRundell:2023}
B.~Kaltenbacher and W.~Rundell.
\newblock {\em {Inverse Problems for Fractional Partial Differential
  Equations}}.
\newblock AMS, Providence, RI, 2023.

\bibitem{KaSe}
H.~Kang, J.~K. Seo, and D.~Sheen.
\newblock The inverse conductivity problem with one measurement: stability and
  estimation of size.
\newblock {\em SIAM J. Math. Anal.}, 28(6):1389--1405, 1997.

\bibitem{Kav}
O.~Kavian.
\newblock Lectures on parameter identification.
\newblock In {\em {Three Courses on Partial Differential Equations}}, pages
  123--162. Berlin: Walter de Gruyter, 2003.

\bibitem{Ki1}
Y.~Kian.
\newblock Equivalence of definitions of solutions for some class of fractional
  diffusion equations.
\newblock {\em Math. Nachr.}, 296(12):5617--5645, 2023.

\bibitem{Kian:2018:TFD}
Y.~Kian, E.~Soccorsi, and M.~Yamamoto.
\newblock On time-fractional diffusion equations with space-dependent variable
  order.
\newblock {\em Ann. Henri Poincar\'{e}}, 19(12):3855--3881, 2018.

\bibitem{KilbasSrivastavaTrujillo:2006}
A.~A. Kilbas, H.~M. Srivastava, and J.~J. Trujillo.
\newblock {\em Theory and {A}pplications of {F}ractional {D}ifferential
  {E}quations}.
\newblock Elsevier Science B.V., Amsterdam, 2006.

\bibitem{Su1}
S.~Kim.
\newblock Unique determination of inhomogeneity in an elliptic equation.
\newblock {\em Inverse Problems}, 18(5):1325--1332, 2002.

\bibitem{Su2}
S.~Kim.
\newblock Recovery of an unknown support of a source term in an elliptic
  equation.
\newblock {\em Inverse Problems}, 20(2):565--574, 2004.

\bibitem{KorabelBarkai:2010}
N.~Korabel and E.~Barkai.
\newblock Paradoxes of subdiffusive infiltration in disordered systems.
\newblock {\em Phys. Rev. Lett.}, 104:170603, 4, 2010.

\bibitem{LiLiuYamamoto:2019}
Z.~Li, Y.~Liu, and M.~Yamamoto.
\newblock Inverse problems of determining parameters of the fractional partial
  differential equations.
\newblock In {\em {Handbook of Fractional Calculus with Applications. {V}ol.
  2}}, pages 431--442. De Gruyter, Berlin, 2019.

\bibitem{LILuY}
Z.~Li, Y.~Luchko, and M.~Yamamoto.
\newblock Analyticity of solutions to a distributed order time-fractional
  diffusion equation and its application to an inverse problem.
\newblock {\em Comput. Math. Appl.}, 73(6):1041--1052, 2017.

\bibitem{LY}
Z.~Li and M.~Yamamoto.
\newblock Uniqueness for inverse problems of determining orders of multi-term
  time-fractional derivatives of diffusion equation.
\newblock {\em Appl. Anal.}, 94(3):570--579, 2015.

\bibitem{LZ}
Z.~Li and Z.~Zhang.
\newblock Unique determination of fractional order and source term in a
  fractional diffusion equation from sparse boundary data.
\newblock {\em Inverse Problems}, 36(11):115013, 20 pp., 2020.

\bibitem{LCY}
H.~Liu, C.-H. Tsou, and W.~Yang.
\newblock On {Calder{\'o}n}'s inverse inclusion problem with smooth shapes by a
  single partial boundary measurement.
\newblock {\em Inverse Problems}, 37(5):055005, 18 pp., 2021.

\bibitem{Orsingher:2018}
E.~Orsingher, C.~Ricciuti, and B.~Toaldo.
\newblock On semi-{M}arkov processes and their {K}olmogorov's
  integro-differential equations.
\newblock {\em J. Funct. Anal.}, 275(4):830--868, 2018.

\bibitem{Rockafellar:1970:ConvexAnalysis}
R.~T. Rockafellar.
\newblock {\em {Convex Analysis}}.
\newblock Princeton University Press, Princeton, NJ, 1970.

\bibitem{RuZ}
W.~Rundell and Z.~Zhang.
\newblock Fractional diffusion: recovering the distributed fractional
  derivative from overposed data.
\newblock {\em Inverse Problems}, 33(3):035008, 27 pp., 2017.

\bibitem{Stein:2003:ComplexAnalysis}
E.~M. Stein and R.~Shakarchi.
\newblock {\em {Complex Analysis}}.
\newblock Princeton University Press, Princeton, NJ, 2003.

\bibitem{Szego:1975:OP}
G.~Szeg\H{o}.
\newblock {\em {Orthogonal Polynomials}}.
\newblock AMS, Providence, RI, fourth edition, 1975.

\bibitem{TrTs}
F.~Triki and C.-H. Tsou.
\newblock Inverse inclusion problem: a stable method to determine disks.
\newblock {\em J. Differential Equations}, 269(4):3259--3281, 2020.

\bibitem{ZhangLiLuo:2013}
H.~Zhang, G.-H. Li, and M.-K. Luo.
\newblock Fractional {F}eynman-{K}ac equation with space-dependent anomalous
  exponent.
\newblock {\em J. Stat. Phys.}, 152(6):1194--1206, 2013.

\end{thebibliography}
\end{document}